\newtheorem{Thm}{Theorem}[section]
\newtheorem{Lem}[Thm]{Lemma}
\newtheorem{Def}[Thm]{Definition}
\newtheorem{Fac}[Thm]{Fact}
\newtheorem{Cor}[Thm]{Corollary}
\newtheorem{Prop}[Thm]{Proposition}
\newtheorem{Rem}[Thm]{Remark}
\title{Representations of Frobenius-type triangular matrix algebras}
\author{Fang Li $\;\;\;\;\;\;\;\;\;$ Chang Ye}
\address{Fang Li
\newline Department
of Mathematics, Zhejiang University (Yuquan Campus), Hangzhou, Zhejiang
310027, P.R.China}
\email{fangli@zju.edu.cn}
\address{Chang Ye
\newline Department
of Mathematics, Zhejiang University (Yuquan Campus), Hangzhou, Zhejiang
310027,  P.R.China}
\email{yc2009@zju.edu.cn}
\date{version of \today}
\newcommand{\lra}{\longrightarrow}
\newcommand{\ra}{\rightarrow}
\newcommand{\sdp}{\times\kern-.2em\vrule height1.1ex depth-.05ex}
\newcommand{\epi}{\lra \kern-.8em\ra}
\begin{document}

\renewcommand{\thefootnote}{\alph{footnote}}
\setcounter{footnote}{-1} \footnote{\emph{Mathematics Subject
Classification(2010)}: 16G10, 16G20, 16G60}
\renewcommand{\thefootnote}{\alph{footnote}}

\begin{abstract}
The aim of this paper is mainly to build a new representation-theoretic realization of finite root systems through the so-called Frobenius-type triangular  matrix algebras by the method of reflection functors over any field. Finally, we give an analog of APR-tilting module for this class of algebras.
 The major conclusions contains the known results as special cases, e.g. that for path algebras over an algebraically closed field \cite{DR2} and for path algebras with relations from symmetrizable cartan matrices \cite{GLS}. Meanwhile, it means the corresponding results for some other important classes of algebras, that is, the path algebras of quivers over Frobenius algebras and the generalized path algebras endowed by Frobenius algebras at vertices.

 \textbf{Key words}: Frobenius-type triangular matrix algebras, reflection functor, locally free module, root system, APR-tilting module
\end{abstract}
\maketitle
\section{Introduction and Preliminaries}

Let $Q$ be a finite connected acyclic quiver, and let $H=kQ$ be the path algebra of $Q$ for an algebraically closed field $k$. Gabriel showed that the quiver $Q$ is representation-finite if and only if $Q$ is a Dynkin quiver of type $A_n,D_n,E_6,E_7,E_8$ in \cite{Ga}. In this case, there is a bijection between the isomorphism classes of indecomposable representations of $Q$ and the set of positive roots of the corresponding simple complex Lie algebra. Bernstein, Gelfand and Ponomarev introduced the machinery of Coxeter functors, which are defined as compositions of reflection functors, to give an elegant proof of Gabriel's theorem in \cite{BGP}. Gabriel also showed that there are functorial isomorphisms $SC^\pm\cong\tau^\pm(-)$ in \cite{GaAR}, where $S$ is a twist functor and $\tau(-)$ is the Auslander-Reiten translation. Auslander, Platzeck and Reiten showed that there exists an $H$-module $T$ satisfying the functorial isomorphisms $F_k^{\pm}(-)\cong {Hom}_H(T,-)$ in \cite{APR} for a BGP-reflection functor $F_k^{\pm}$ and the APR-tilting module $T$.

Some of these results have been developed to valued graphs or $k$-species by Dlab and Ringel for a field $k$, see \cite{Ringel}, \cite{DR}, \cite{DR2}. Moreover, in \cite{GLS},  for any field $k$, Geiss, Leclerc and Schr{\"o}er generalized them to a class of 1-Gorenstein algebras $A$, which were defined via quivers with relations associated to symmetrizable Cartan matrices, as follows.

  Let $C=(c_{ij})\in M_n(\mathbb{Z})$ be a symmetrizable generalized Cartan matrix with a symmetrizer  $D={diag}(c_1,\cdots,c_n)$. For all $c_{ij}<0$, write that
 $g_{ij}:=|gcd(c_{ij},c_{ji})|,~~~~f_{ij}:=|c_{ij}|/g_{ij},~~~~k_{ij}:=gcd(c_i,c_j).$

An {\em orientation} of $C$ is a subset $\Omega\subset \{1,2,\ldots,n\}\times \{1,2,\ldots,n\}$ such that the following hold:

 $(i)~\{(i,j),(j,i)\}\cap \Omega\neq\emptyset$ if and only if $c_{ij}<0$;

$(ii)$For each sequence $((i_1,i_2),(i_2,i_3),\ldots,(i_t,i_{t+1}))$ with $t\geq 1$ and $(i_s,i_{s+1})\in\Omega$ for all $1\leq s\leq t$ we have $i_1\neq i_{t+1}$.\\
For an orientation $\Omega$ of $C$, let $Q:=Q(C,\Omega):=(Q_0,Q_1,s,t)$ be the quiver with the set of vertices $Q_0:={1,\ldots,n}$ and the set of arrows
$$Q_1:=\{\alpha_{ij}^{(g)}:j\rightarrow i|(i,j)\in \Omega, 1\leq g\leq g_{ij}\}\cup \{\varepsilon_i:i\rightarrow i|1\leq i\leq n\}.$$

For a qiuver $Q=Q(C,\Omega)$ and a symmetrizer $D={diag}(c_1,\ldots,c_n)$ of $C$, let
\begin{equation}\label{Goren}
A=A(C,D,\Omega):=kQ/I
\end{equation}
where $kQ$ is the path algebra of $Q$, and $I$ is the ideal of $kQ$ defined by the following relations:

$(i)$ For each $i$ we have the nilpotency relation $\varepsilon_i^{c_i}=0$.

$(ii)$ For each $(i,j)\in\Omega$ and each $1\leq g\leq g_{ij}$ we have the commutativity relation $\varepsilon_i^{f_{ji}}\alpha_{ij}^{(g)}=\alpha_{ij}^{(g)}\varepsilon_j^{f_{ij}}.$

It was proved in \cite{GLS} that the algebra $A$, given in (\ref{Goren}) is of 1-Gorenstein.

The aim of this paper is to give a more larger class of 1-Gorenstein algebras in which the important conclusions about representation theory still hold.

 In the sequel, the ground field $k$ is always permitted to be any field.

 For $n\geq 2$, define a \emph{triangular matrix algebra} $\Gamma$ of order $n$ satisfying $\Gamma=\left(
                                                                          \begin{array}{cccc}
                                                                            A_1 & A_{12} & \ldots& A_{1n} \\
                                                                            0 & A_2& \ldots & A_{2n} \\
                                                                             \vdots& \vdots& \ddots&
                                                                             \vdots \\
                                                                             0& 0& \cdots&
                                                                             A_n \\
                                                                          \end{array}
                                                                        \right)$, where each $A_i$ is an algebra and  $A_{ij}$  an $A_i$-$A_j$-bimodule with bimodule maps $\mu_{ilj}:A_{il}\otimes_{A_l}A_{lj}\rightarrow A_{ij}$ such that the following diagram commutes:
                                                                        \begin{equation*}
\CD
  A_{il}\otimes_{A_l}A_{lj}\otimes_{A_j}A_{jt} @>\mu_{ilj}\otimes {id}_{A_{jt}}>> A_{ij}\otimes_{A_j}A_{jt} \\
  @V {id}_{A_{il}}\otimes \mu_{ljt} VV @VV    \mu_{ijt}V  \\
  A_{il}\otimes_{A_l}A_{lt} @>\mu_{ilt} >> A_{it}
\endCD
\end{equation*}
for $1\leq i<l<j<t\leq n$, whose multiplication is given by $(AB)_{ij}=\sum_{i<l<j} \mu_{ilj}(a_{il}\otimes b_{lj})$ for $A=(a_{ij})_{n\times n}, B=(b_{ij})_{n\times n}\in \Gamma$ where $(AB)_{ij}$ means the $(i,j)$-entry of $AB$.

A {\em representation} $X$ of $\Gamma$ is defined as a datum $X=\left(
       \begin{array}{c}
        X_1 \\
         X_2\\
         \vdots \\
         X_n\\
       \end{array}
     \right)_{\phi_{ij}}$, with $\phi_{ij}$: $A_{ij}\otimes_{A_j}X_j\rightarrow X_i$ an $A_i$-module morphism for $1\leq i<j\leq n$ so that it satisfies the following commutative diagram:\begin{equation*}
\CD
  A_{ij}\otimes_{A_j}A_{jq}\otimes      _{A_q}X_q @>\mu_{ijq}\otimes {id}_{X_q}>> A_{iq}\otimes_{A_q}X_q \\
  @V {id}_{A_{ij}}\otimes \phi_{jq} VV @VV    \phi_{iq}V  \\
  A_{ij}\otimes_{A_j}X_j @>\phi_{ij} >> X_i
\endCD
\end{equation*}

A {\em morphism} $f$ from a representation $X$ to another representation $Y$ is defined as a datum $(f_i,1\leq i\leq n)$, where $f_i: X_i\rightarrow Y_i$ is an $A_i$-map such that $f_i\phi_{ij}=\phi_{ij}(id_{A_{i,j}}\otimes f_j)$ for each $1\leq i<j\leq n$.

Then we obtain the {\em representation category} of $\Gamma$, denoted as ${Rep}(\Gamma)$.

\begin{Def}\label{frobenius type}
Let $A_i$ be  Frobenius algebras with units $e_i$ for all $i=1,\cdots,n$. Let $B_{ij}$ be $A_i$-$A_j$-bimodules  as free left $A_i-$modules and free right $A_j-$modules of finite ranks respectively satisfying ${ Hom}_{A_i}(B_{ij},A_i)\cong { Hom}_{A_j}(B_{ij},A_j)$ for all $1\leq i<j\leq n$. Let
\begin{equation}\label{grade}
{A_{ij}=\oplus_{l=0}^{j-i-1}\oplus_{i<k_1<k_2<\cdots<k_l<j}B_{ik_1}\otimes_{A_{k_1}}B_{k_1k_2}\otimes\cdots\otimes _{A_{k_l}}B_{k_lj}
}
\end{equation}
  for $1\leq i<j\leq n$, where $l=0$ means the direct summand $B_{ij}$.

Define a triangular matrix algebra $\Lambda=\left(
                                                                          \begin{array}{cccc}
                                                                            A_1 & A_{12} & \ldots& A_{1n} \\
                                                                            0 & A_2& \ldots & A_{2n} \\
                                                                             \vdots& \vdots& \ddots&
                                                                             \vdots \\
                                                                             0& 0& \cdots&
                                                                             A_n \\
                                                                          \end{array}
                                                                        \right)$
                                                                         which is called a {\em Frobenius-type triangular matrix algebra}, if it satisfies (\ref{grade}) where the map $\mu_{ijq}:A_{ij}\otimes_{A_j}A_{jq}\rightarrow A_{iq}$ is the natural inclusion                                                                 map.
\end{Def}

 The algebra $A=A(C,D,\Omega)=kQ/I$ in (\ref{Goren}) from \cite{GLS} is indeed a Frobenius-type triangular matrix algebra. To see this, one needs only to  re-order vertexes by $i<j$ if $(i,j)\in \Omega$ and take $A_i=e_iHe_i,~B_{ij}=A_i{Span}_k(\alpha_{ij}^{(g)}|1\leq g\leq g_{ij})A_j$. It is easy to see such  $A$ satisfy the condition of Frobenius-type triangular matrix algebra in Definition \ref{frobenius type}.

 Following this fact, in this paper the major results on Frobenius-type triangular matrix algebras are the improvement of the corresponding ones in \cite{GLS}.
For convenience, in the sequel, we will always assume $A_i$ to be finite dimensional for all $i=1,\cdots,n$.

\begin{Rem}\label{rem1.2}
$(i)$ In Definition \ref{frobenius type}, if each $A_i$ is only a finite dimension algebra, and the condition ${ Hom}_{A_i}(B_{ij},A_i)\cong { Hom}_{A_j}(B_{ij},A_j)$ is replaced by that ${ Hom}_{A_i}(B_{ij},A_i)$ is a projective $A_j$-module or an injective $A_j$-module, then $\Lambda$ is called a {\em normally upper triangular gm algebra}, which was introduced and investigated in \cite{LY}.

Since ${ Hom}_{A_j}(B_{ij},A_j)$ is a free left $A_j$-module and ${ Hom}_{A_i}(B_{ij},A_i)\cong { Hom}_{A_j}(B_{ij},A_j)$ we obtain that ${ Hom}_{A_i}(B_{ij},A_i)$ is a free left $A_j$-module, then, of course, a projective $A_j$-module. So all Frobenius-type triangular matrix algebras are normally upper triangular gm algebras.

$(ii)$ In Definition \ref{frobenius type}, let $B_{ij}=0$ for all $|i-j|\geq 2$ and but $A_i$ are allowed to be any rings, then we obtain a class of triangular matrix rings
 which were studied in \cite{SL} about their module category and some homological characterizations.

$(iii)$  A path algebra of quiver over algebra $\Lambda=AQ=A\otimes_k kQ$ for an acyclic quiver $Q$ and a Frobenius algebra $A$ which studied in \cite{RZ} is also a Frobenius-type triangular matrix algebra. Here one needs only to take all $A_i=A$ and $B_{ij}=\oplus_{s=1}^{\#\{\alpha:j\rightarrow i\}} A$. In particular, it was investigated in \cite{RZ} for the case $A=k[T]/[T^2]$ the algebra of dual numbers.

$(iv)$  A generalized path algebra $\Lambda=k(Q,\mathcal A)$, with an acyclic quiver $Q$, $\mathcal A=\{A_i\}_{i\in Q_0}$ and Forbenius algebras $A_i (i\in Q_0)$, is a Frobenius-type triangular matrix algebra through taking all $B_{ij}$ to be generalized arrows from $j$ to $i$ and thus obtaining $A_{ij}$ as  generalized paths from $j$ to $i$. For more details, see \cite{LY}.

\end{Rem}

 Let $X=\left(
       \begin{array}{c}
        X_1 \\
         X_2\\
         \vdots \\
         X_n\\
       \end{array}
     \right)_{\phi_{ij}}$ be a $\Lambda$-module, it is easy to see that $\phi_{ij}$: $A_{ij}\otimes_{A_j}X_j\rightarrow X_i$ is uniquely determined by $\phi_{ij}|_{{Res}(B_{ij}\otimes X_j)}:B_{ij}\otimes_{A_j}X_j\rightarrow X_i$ for $1\leq i<j\leq n$.

\begin{Rem}
{Without ambiguity, we sometimes omit to write $\phi_{ij}$, especially when $\phi_{ij}$ is a natural inclusion for all $1\leq i<j\leq n$.
}
\end{Rem}
\begin{Rem}
In this paper, we consider only $\Lambda$ which is connected, that is, $\Lambda$ can NOT be written as $\left(
                                                                                                 \begin{array}{cc}
                                                                                                   \Lambda_1 & 0 \\
                                                                                                   0 & \Lambda_2\\
                                                                                                 \end{array}
                                                                                               \right)$  for two non-zero Frobenius-type triangular matrix algebras $\Lambda_1$ and $\Lambda_2$.
                                                                                                Otherwise, then $\Lambda=\left(
                                                                                                 \begin{array}{cc}
                                                                                                   \Lambda_1 & 0 \\
                                                                                                   0 & \Lambda_2\\
                                                                                                 \end{array}
                                                                                               \right)$, whose any representation $M$ is always a direct sum  representation $M_i$ of $\Lambda_i$ ($i=1,2$), that is, $M=M_1\oplus M_2$.
\end{Rem}

This article is organized as follows. In Section 2, we study some properties of Frobenius-type triangular matrix algebras and in particular, show that they are a class of 1-Gorenstein algebras. In Section 3, we define the reflection functors for Frobenius-type triangular matrix algebras and give the relation between Coxeter functors and Auslander-Reiten translation for these algebras. In Section 4, we recall some definitions and basic facts on Cartan matrices, quadratic forms and Weyl groups. Then we give a new representation-theoretic realizations of all finite root systems via Frobenius-type triangular matrix algebras. In Section 5, we study the generalized versions of $APR$-tilting modules over Frobenius-type triangular matrix algebras.

\section{{ Locally free modules and 1-Gorenstein property }}

\begin{Prop}\cite{LY}\label{representation}
{The representation category of a Frobenius-type triangular matrix algebra $\Lambda$ and the module category of $\Lambda$ are equivalent.
}
\end{Prop}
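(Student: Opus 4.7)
The plan is to construct mutually quasi-inverse functors between $\mathrm{Rep}(\Lambda)$ and $\mathrm{Mod}(\Lambda)$ by using the diagonal idempotents $e_i \in A_i \subset \Lambda$ (viewed as the matrix with $e_i$ in position $(i,i)$ and zeros elsewhere).

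First I would define a functor $F \colon \mathrm{Rep}(\Lambda) \to \mathrm{Mod}(\Lambda)$ as follows. Given a representation $X = (X_i)_{\phi_{ij}}$, let $F(X) = X_1 \oplus X_2 \oplus \cdots \oplus X_n$ regarded as a column vector, and let $\Lambda$ act by the obvious matrix-times-vector formula: for $(a_{ij}) \in \Lambda$ and $x = (x_j)$, the $i$-th coordinate of $(a_{ij})\cdot x$ is $a_{ii}x_i + \sum_{j>i}\phi_{ij}(a_{ij}\otimes x_j)$. Associativity of this action is exactly the content of the commutative diagram in the definition of a representation, combined with the compatibility of the $\mu_{ilj}$'s coming from the algebra structure on $\Lambda$. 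On morphisms, send $(f_i) \colon X \to Y$ to the map $F(X) \to F(Y)$ acting by $f_i$ on the $i$-th summand; the condition $f_i\phi_{ij} = \phi_{ij}(\mathrm{id}\otimes f_j)$ is exactly what is needed for this to be $\Lambda$-linear.

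Next I would define the quasi-inverse $G \colon \mathrm{Mod}(\Lambda) \to \mathrm{Rep}(\Lambda)$ as follows. Given a left $\Lambda$-module $M$, set $X_i := e_i M$. Since the $e_i$ are orthogonal idempotents summing to the unit of $\Lambda$, we get a decomposition $M = \bigoplus_i X_i$ as abelian groups, and each $X_i$ is naturally an $A_i$-module. For $1 \le i < j \le n$, left multiplication by an element $a_{ij} \in A_{ij}$ (viewed as a matrix entry) sends $X_j = e_j M$ into $X_i = e_i M$ and factors through $A_{ij} \otimes_{A_j} X_j$, producing the structure maps $\phi_{ij}$. The required commutative diagrams encoding $\phi$'s compatibility with the $\mu_{ijq}$'s now read off directly from associativity of the $\Lambda$-action. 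A $\Lambda$-linear map $g \colon M \to N$ commutes with each $e_i$, hence restricts to $A_i$-linear maps $f_i = g|_{X_i} \colon X_i \to Y_i$, which automatically satisfy the compatibility with the $\phi_{ij}$'s.

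Finally I would check that $GF \cong \mathrm{id}_{\mathrm{Rep}(\Lambda)}$ and $FG \cong \mathrm{id}_{\mathrm{Mod}(\Lambda)}$ through the obvious natural isomorphisms: for $X \in \mathrm{Rep}(\Lambda)$, the equality $e_i F(X) = X_i$ is built into the construction, and for $M \in \mathrm{Mod}(\Lambda)$, the decomposition $M = \bigoplus_i e_i M$ reassembles into $M$ under $F$. The main obstacle is really only the bookkeeping: one must track carefully that the two kinds of commutative diagrams (those encoding associativity of $\mu$ in $\Lambda$, and those encoding compatibility of $\phi$ in a representation) correspond to each other under the idempotent decomposition. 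Since Definition \ref{frobenius type} specifies the $\mu_{ilj}$ as natural inclusions coming from the tensor presentation of $A_{ij}$, and since Remark 1.3 notes that $\phi_{ij}$ is determined by its restriction to $B_{ij} \otimes_{A_j} X_j$, all these compatibilities reduce to straightforward verifications on the generating bimodules $B_{ij}$.
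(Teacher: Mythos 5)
Your proposal is correct and is essentially the standard argument: the paper itself gives no proof here but cites \cite{LY}, where the equivalence is established by exactly this idempotent decomposition $M = \bigoplus_i e_iM$ together with the matrix-action functor in the other direction. The only remark worth adding is that your argument nowhere uses the Frobenius-type hypotheses, so it in fact proves the equivalence for an arbitrary triangular matrix algebra $\Gamma$ as defined in the introduction, of which the proposition is a special case.
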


We denote $B_{ji}={ Hom}_{A_i}(B_{ij},A_i)\cong { Hom}_{A_j}(B_{ij},A_j)$ for $1\leq i<j\leq n$.

\begin{Prop}
{For $1\leq i<j\leq n,~B_{ji}$ is a free left $A_j-$module and a free right $A_i-$module. Also, we have  $B_{ij}={ Hom}_{A_i}(B_{ji},A_i)\cong { Hom}_{A_j}(B_{ji},A_j)$.
}
\end{Prop}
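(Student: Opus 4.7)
The proof plan rests on two elementary facts about finitely generated free modules over a ring: taking the $A$-linear dual swaps left and right module structures while preserving freeness and rank, and the canonical evaluation map furnishes a natural bimodule isomorphism $B \cong B^{**}$ when $B$ is finitely generated free (or more generally finitely generated projective) on one side.

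First I would unpack the bimodule structures. Since $B_{ij}$ is an $A_i$-$A_j$-bimodule, $\text{Hom}_{A_i}(B_{ij},A_i)$ (using the left $A_i$-structure) naturally carries a right $A_i$-action from the target and a left $A_j$-action from the right $A_j$-action on the source, i.e., it is an $A_j$-$A_i$-bimodule; the same conclusion holds for $\text{Hom}_{A_j}(B_{ij},A_j)$. Thus $B_{ji}$ is a well-defined $A_j$-$A_i$-bimodule. Because $B_{ij}$ is finitely generated free as a left $A_i$-module of some rank $r$, the identification $B_{ji}\cong\text{Hom}_{A_i}(B_{ij},A_i)\cong\text{Hom}_{A_i}(A_i^{\,r},A_i)\cong A_i^{\,r}$ shows $B_{ji}$ is free of rank $r$ as a right $A_i$-module. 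Dually, using $B_{ji}\cong\text{Hom}_{A_j}(B_{ij},A_j)$ and that $B_{ij}$ is finitely generated free as a right $A_j$-module of some rank $s$, the same argument yields that $B_{ji}$ is free of rank $s$ as a left $A_j$-module. This proves the first assertion.

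For the second assertion, I would invoke the double-dual isomorphism. For any finitely generated free left $A$-module $M$, the evaluation map $M\to\text{Hom}_A(\text{Hom}_A(M,A),A)$, $m\mapsto(\varphi\mapsto\varphi(m))$, is a bimodule isomorphism (it suffices to verify this on a free basis, where it is immediate). Applying this to $M=B_{ij}$ with its left $A_i$-structure gives
\[
\text{Hom}_{A_i}(B_{ji},A_i)=\text{Hom}_{A_i}\bigl(\text{Hom}_{A_i}(B_{ij},A_i),A_i\bigr)\;\cong\;B_{ij}
\]
as $A_i$-$A_j$-bimodules. Applying the same principle to $B_{ij}$ viewed as a right $A_j$-module yields $\text{Hom}_{A_j}(B_{ji},A_j)\cong B_{ij}$. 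Combining these two isomorphisms with $B_{ji}$ being the common value of $\text{Hom}_{A_i}(B_{ij},A_i)$ and $\text{Hom}_{A_j}(B_{ij},A_j)$ completes the proof.

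The only mild obstacle I anticipate is bookkeeping: tracking which side an action lives on after each \text{Hom}, and verifying that the evaluation map is a bimodule (not merely one-sided) isomorphism. Both are purely formal once the left/right conventions are fixed, and no use of the Frobenius property of the $A_i$ is needed for this proposition; the Frobenius hypothesis will become essential only in subsequent results about $1$-Gorenstein-ness and reflection functors.
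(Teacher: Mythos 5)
Your proposal is correct and is exactly the argument the paper has in mind: the paper's own proof of this proposition is just the single line ``It is trivial,'' and what you have written is the standard dual-basis/double-dual verification that justifies that claim (one-sided duals of finitely generated free modules are free of the same rank with sides swapped, and the evaluation map gives $B_{ij}\cong B_{ij}^{**}$ on either side). Your observation that the Frobenius hypothesis is not needed here, only finite generation and freeness, is also accurate.
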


\begin{proof}
It is trivial.
\end{proof}
\begin{Prop}\label{isomorphism}
For any $A_j$-module $M_j$ and $A_i$-module $N_i$, we have the isomorphism $${ Hom}_{A_i}(B_{ij}\otimes M_j,N_i)\cong { Hom}_{A_j}(M_j,B_{ji}\otimes N_i).$$
\end{Prop}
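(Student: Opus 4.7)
The plan is to reduce the claimed identity to the classical tensor-hom adjunction, using that $B_{ij}$ is a finitely generated free left $A_i$-module. More precisely, I would split the argument into two isomorphisms and then compose them.

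First, I would invoke the standard adjunction between $B_{ij}\otimes_{A_j}-$ and $\mathrm{Hom}_{A_i}(B_{ij},-)$: for any $A_j$-module $M_j$ and any $A_i$-module $N_i$,
\[
\mathrm{Hom}_{A_i}(B_{ij}\otimes_{A_j}M_j,\,N_i)\;\cong\;\mathrm{Hom}_{A_j}\bigl(M_j,\,\mathrm{Hom}_{A_i}(B_{ij},N_i)\bigr),
\]
where the right-hand $\mathrm{Hom}_{A_i}(B_{ij},N_i)$ acquires its left $A_j$-module structure from the right $A_j$-action on $B_{ij}$. This step is a formal consequence of the bimodule structure on $B_{ij}$ and requires no assumption on freeness.

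Next, I would exhibit a natural isomorphism
\[
\mathrm{Hom}_{A_i}(B_{ij},N_i)\;\cong\;B_{ji}\otimes_{A_i}N_i
\]
of left $A_j$-modules. Since $B_{ij}$ is a finitely generated free left $A_i$-module (by Definition~\ref{frobenius type}), the evaluation/coevaluation map
\[
B_{ji}\otimes_{A_i}N_i = \mathrm{Hom}_{A_i}(B_{ij},A_i)\otimes_{A_i}N_i \;\longrightarrow\; \mathrm{Hom}_{A_i}(B_{ij},N_i),\qquad (\varphi\otimes n)\longmapsto\bigl(b\mapsto \varphi(b)n\bigr),
\]
is an isomorphism; this is the usual fact that $\mathrm{Hom}_{A_i}(P,-)\cong P^{*}\otimes_{A_i}-$ whenever $P$ is finitely generated projective. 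One then checks that this isomorphism is compatible with the left $A_j$-action inherited from the $A_j$-$A_i$-bimodule structure on $B_{ji}$.

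Composing the two isomorphisms yields exactly the claim. The only technical point, and the one on which I would spend the most care, is bookkeeping the bimodule structures so that the adjunction isomorphism and the evaluation isomorphism are both $A_j$-linear (rather than merely abelian group isomorphisms), so that the composite is natural in $M_j$ and $N_i$. Everything else is formal, so the proof should essentially be a one-line invocation of the two isomorphisms above, and the statement in the paper is indeed trivial once Definition~\ref{frobenius type} has been set up.
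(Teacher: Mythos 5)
Your proposal is correct and follows essentially the same route as the paper: the tensor-hom adjunction followed by the identification $\mathrm{Hom}_{A_i}(B_{ij},N_i)\cong \mathrm{Hom}_{A_i}(B_{ij},A_i)\otimes_{A_i}N_i=B_{ji}\otimes_{A_i}N_i$, valid since $B_{ij}$ is finitely generated projective (indeed free) as a left $A_i$-module. Your extra attention to the $A_j$-linearity of the two isomorphisms is a point the paper glosses over but does not change the argument.
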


\begin{proof}
  The adjunction map gives an isomorphism of $k$-vector spaces:
  $$ { Hom}_{A_i}(B_{ij}\otimes M_j,N_i)\cong { Hom}_{A_j}(M_j,{ Hom}_{A_i}(B_{ij},N_i)).$$
  Since ${ Hom}_{A_i}(B_{ij},N_i)\cong { Hom}_{A_i}(B_{ij},A_i\otimes_{A_i}N_i)\cong { Hom}_{A_i}(B_{ij},A_i)\otimes_{A_i}N_i\cong B_{ji}\otimes_{A_i}N_i$,\\
  we have that: ${ Hom}_{A_i}(B_{ij}\otimes M_j,N_i)\cong { Hom}_{A_j}(M_j,B_{ji}\otimes N_i)$.
\end{proof}
\begin{Lem}\label{dualbasis}
\rm{\textbf{[The Dual Basis Lemma]}} Let $A$ be an Artin algebra, and $P$ be a finite generated projective $A$-module, ${Hom}_A(P,A)$, there exist $x_1,\cdots,x_m\in P,~f_1,\cdots,f_m\in {Hom}_A(P,A),$ such that for each $x\in P, x=\Sigma_{i=1}^m f_i(x)x_i$.
\end{Lem}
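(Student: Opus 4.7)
The plan is to reduce to the case of a finitely generated free module, where a "dual basis" exists by direct inspection, and then transfer it along a splitting to the projective $P$. Since $P$ is finitely generated projective over the Artin algebra $A$, there is some integer $m$, a free module $A^m$, and $A$-module maps $\iota\colon P\to A^m$ and $\pi\colon A^m\to P$ with $\pi\iota=\mathrm{id}_P$; this is just the standard characterization of projective modules as direct summands of free modules.

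Next I would use the canonical structure of the free module. Let $e_1,\dots,e_m$ be the standard basis of $A^m$ and let $p_i\colon A^m\to A$ denote the $i$-th coordinate projection, so that for every $y\in A^m$ one has $y=\sum_{i=1}^m p_i(y)\,e_i$. Set
\[
x_i := \pi(e_i)\in P, \qquad f_i := p_i\circ \iota \in \mathrm{Hom}_A(P,A).
\]
For any $x\in P$, applying $\pi\iota=\mathrm{id}_P$ and $A$-linearity of $\pi$ gives
\[
x \;=\; \pi(\iota(x)) \;=\; \pi\!\left(\sum_{i=1}^m p_i(\iota(x))\,e_i\right) \;=\; \sum_{i=1}^m p_i(\iota(x))\,\pi(e_i) \;=\; \sum_{i=1}^m f_i(x)\,x_i,
\]
which is the desired identity.

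There is essentially no obstacle here; the argument is entirely formal and uses only that projective modules split off free modules. The Artin algebra hypothesis is not really needed for the statement itself, but it ensures that the finitely generated condition behaves well (e.g.\ $m$ can be taken finite and $\mathrm{Hom}_A(P,A)$ is again finitely generated). I would just verify at the end that each $f_i$ is genuinely $A$-linear (inherited from $\iota$ and $p_i$), and that the sum on the right-hand side is finite, so the expression makes sense.
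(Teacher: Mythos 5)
Your argument is correct and is the standard textbook proof of the Dual Basis Lemma (splitting $P$ off a finite free module and pulling back the coordinate functionals); note that the paper itself states this lemma without any proof, treating it as a known fact, so there is nothing to compare against. Your remark that the Artin hypothesis is inessential is also accurate --- only finite generation and projectivity are used.
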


Since $B_{ij}$ is finite generated projective left $A_i$-module and finite generated projective right $A_j$-module, by the above lemma, there exist $L_{ij}\subseteq B_{ij},~L_{ij}^*\subseteq  {Hom}_{A_i}(B_{ij},A_i)$ and $R_{ij}\subseteq B_{ij},~R_{ij}^*\subseteq  {Hom}_{A_j}(B_{ij},A_j)$ such that for each $b_{ij}\in B_{ij},~b_{ij}=\Sigma_{\ell\in L_{ij}} \ell^*(b_{ij})\ell=\Sigma_{r\in R_{ij}} r^*(b_{ij})r$.

For the isomorphism in Proposition \ref{isomorphism}, for each $\phi_{ij}\in { Hom}_{A_i}(B_{ij}\otimes M_j,N_i)$, we have
$\overline{\phi_{ij}} \in { Hom}_{A_j}(M_j,B_{ji}\otimes N_i)$ satisfying $\overline{\phi_{ij}}(m_j)= \Sigma_{\ell\in L_{ij}}\ell^*\otimes \phi_{ij}(\ell\otimes m_j).$

On the other hand, for each $\psi_{ij}\in { Hom}_{A_j}(M_j,B_{ji}\otimes N_i)$, we have
$\overline{\psi_{ij}} \in { Hom}_{A_i}(B_{ij}\otimes M_j,N_i)$ satisfying $\overline{\psi_{ij}}(b_{ij}\otimes m_j)= \Sigma_{\ell\in L_{ij}}\ell^*(b_{ij})\psi_{ij}(m_j)_\ell,$
where the elements $\psi_{ij}(m_j)_l\in N_i$ are uniquely determined by $\psi_{ij}(m_j)=\Sigma_{\ell\in L_{ij}}\ell^*\otimes\psi_{ij}(m_j)_\ell$.

Let $\Lambda$ be a Frobenius-type triangular matrix algebra as in Definition \ref{frobenius type}. Denote $P_i=\Lambda e_i$ the projective $\Lambda$-module for the idempotent $e_i$ as the unit 1 of $A_i$ and $I_i$ the corresponding injective $\Lambda$-module for $1\leq i \leq n$. Obviously,
     \begin{equation}\label{proj}
P_i=(A_{1i}\; A_{2i}\; \cdots \; A_{ii}\; 0\; \cdots\; 0)^t.
     \end{equation}
      Also, we denote ${E_i}$ the $\Lambda$-module $(0\; \cdots\; A_i\; \cdots \; 0)^t$ where $A_i$ is in the $i-th$ row.

\begin{Lem}\label{tensor projective}
\cite{LY} For two $k$-algebras $A$ and $B$, assume that $M$ is an $A$-$B$-bimodule such that $M$ is a projective left $A$-module, $P$ is a projective left $B$-module. Then $M\otimes_BP$ is a projective left $A$-module.
\end{Lem}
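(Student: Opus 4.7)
The plan is to reduce to the case of a free module on the right, use the compatibility of tensor product with direct sums, and then exploit closure of projectives under direct sums and summands.

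First I would recall the characterization: $P$ is a projective left $B$-module if and only if there exists a left $B$-module $P'$ and an index set $I$ together with an isomorphism $P\oplus P'\cong B^{(I)}$ of left $B$-modules. Applying the functor $M\otimes_B(-)$, which is additive (and commutes with arbitrary direct sums, being a left adjoint), gives an $A$-module isomorphism
\[
(M\otimes_B P)\oplus(M\otimes_B P')\cong M\otimes_B B^{(I)}\cong M^{(I)}.
\]
Here the bimodule structure on $M$ is what makes each $M\otimes_B(-)$ a left $A$-module, and the final identification uses the standard isomorphism $M\otimes_B B\cong M$ of left $A$-modules induced by the right $B$-action on $M$.

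Next, because $M$ is projective as a left $A$-module, the direct sum $M^{(I)}$ is also projective as a left $A$-module (arbitrary direct sums of projectives are projective). Since $M\otimes_B P$ appears as a direct summand of $M^{(I)}$ via the displayed isomorphism, and any direct summand of a projective module is projective, we conclude that $M\otimes_B P$ is projective as a left $A$-module, as required.

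There is no real obstacle here; the only point that needs care is tracking the $A$-action throughout. Concretely, one must verify that the isomorphism $M\otimes_B B^{(I)}\cong M^{(I)}$ is an isomorphism of \emph{left} $A$-modules (it is, since the left $A$-action is inherited solely from $M$ and is unaffected by the right $B$-tensor factor), and that the splitting of $P\oplus P'\cong B^{(I)}$ as left $B$-modules is preserved by $M\otimes_B(-)$ as a splitting of left $A$-modules. Both are routine, so the argument is complete.
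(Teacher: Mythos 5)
Your argument is correct: reducing to a direct summand of a free module $B^{(I)}$, using that $M\otimes_B(-)$ commutes with direct sums and that $M\otimes_B B\cong M$ as left $A$-modules, and then invoking closure of projectives under direct sums and summands is the standard proof of this fact. The paper itself gives no proof, simply citing \cite{LY}, so there is nothing to contrast with; your write-up fills that gap correctly, including the point about tracking the left $A$-action.
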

\begin{Prop}\label{dimension1}
{For every $i=1,\ldots,n$,  ${proj.dim}(E_i)\leq 1$ and ${inj.dim}(E_i)\leq 1.$
}
\end{Prop}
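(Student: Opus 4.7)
The plan is to exhibit, for each $i$, projective and injective resolutions of $E_i$ of length at most one. The central ingredient is the decomposition (\ref{grade}): setting $A_{jj}:=A_j$ and grouping the tensor paths by the last, respectively the first, $B$-factor, one obtains
\begin{equation*}
A_{ij}\;=\;\bigoplus_{k:\;i\leq k<j}A_{ik}\otimes_{A_k}B_{kj}\;=\;\bigoplus_{k:\;i<k\leq j}B_{ik}\otimes_{A_k}A_{kj},\qquad i<j.
\end{equation*}
Because each $\mu_{ijk}$ is, by Definition~\ref{frobenius type}, the canonical inclusion of summands, these decompositions will yield genuine $\Lambda$-module (not merely $k$-vector-space) isomorphisms when applied coordinate-wise to the rows of our candidate resolutions.

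For the projective dimension I would take the canonical surjection $P_i\twoheadrightarrow E_i$ projecting onto the $i$th row; its kernel has $k$th row $A_{ki}$ for $k<i$ and zero otherwise. The first decomposition above identifies this kernel with $\bigoplus_{j<i}P_j\otimes_{A_j}B_{ji}$ row by row, and hence, by the remark just made, as $\Lambda$-modules. Each summand $P_j\otimes_{A_j}B_{ji}$ is projective by Lemma~\ref{tensor projective}, since $B_{ji}$ is free---hence projective---as a left $A_j$-module by our hypothesis. This furnishes a length-one projective resolution and so $\mathrm{proj.dim}(E_i)\leq 1$.

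For the injective dimension I would dualize the same argument. The Frobenius property of $A_i$ supplies an isomorphism $A_i\cong D(A_i)$ of left $A_i$-modules, where $D=\mathrm{Hom}_k(-,k)$; placing it in the $i$th row (with zero elsewhere) defines a $\Lambda$-monomorphism $E_i\hookrightarrow I_i$, the structure-map compatibility being vacuous on $E_i$. Its cokernel has $k$th row $D(A_{ik})$ for $k>i$ and zero otherwise. Applying $D$ to the second decomposition above and using that $B_{ij}$ is free as a right $A_j$-module identifies this cokernel with $\bigoplus_{j>i}I_j\otimes_{A_j}B_{ji}$; since $B_{ji}$ is free as a left $A_j$-module, each such summand is a finite direct sum of copies of $I_j$ and hence injective. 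This furnishes a length-one injective resolution and so $\mathrm{inj.dim}(E_i)\leq 1$.

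The only non-formal step is verifying that the coordinate-wise identifications extracted from (\ref{grade}) actually intertwine the structure maps $\mu_{ijk}$ (equivalently the $\phi$'s at the level of representations). This is routine given the canonical-inclusion character of $\mu$ built into Definition~\ref{frobenius type}, but it is the step on which the whole construction rests; it is also the step where the Frobenius hypothesis genuinely enters on the injective side, through the self-duality $A_i\cong D(A_i)$.
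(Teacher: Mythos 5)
Your proof is correct and follows essentially the same route as the paper: the same short exact sequence $0\to\oplus_{j<i}P_j\otimes_{A_j}B_{ji}\to P_i\to E_i\to 0$ (via the regrading of $A_{ki}$ by last tensor factor and Lemma~\ref{tensor projective}) for the projective side, and the same injective copresentation $0\to E_i\to I_i\to\oplus_{j>i}I_j\otimes_{A_j}B_{ji}\to 0$ for the injective side. The only cosmetic difference is that the paper obtains the latter by applying $D$ to a projective presentation of the right module $E_i'$ with $D(E_i')\cong E_i$, whereas you build it directly on the left using $A_i\cong D(A_i)$; this is the same sequence, and both uses of the Frobenius hypothesis coincide.
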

\begin{proof}
Clearly, $E_1=P_1$.
For every $i=2,\ldots,n,$ we have exact sequences:
\begin{equation}\label{rankproj}
0\rightarrow \oplus_{j=1}^{i-1}P_j\otimes_{A_j}B_{ji}\rightarrow P_i\rightarrow E_i\rightarrow 0.
\end{equation}
So, ${proj.dim}(E_i)\leq 1$ by Lemma \ref{tensor projective}.
Let $E'_i$ be the right $\Lambda$-module such that $D(E'_i)\cong E_i$. For $i=1,\cdots,n-1$, there is a canonical exact sequence
\begin{equation}\label{iprojresolution}
0\rightarrow \oplus_{j=i+1}^{n}B_{ij}\otimes_{A_j}e_j\Lambda\rightarrow e_i\Lambda\rightarrow E'_i\rightarrow 0.
\end{equation}
Applying the duality $D$ to (\ref{iprojresolution}) we get a minimal injective resolution
\begin{equation}\label{injresolution}
0\rightarrow E_i\rightarrow D(e_i\Lambda)\rightarrow\oplus_{j=i+1}^{n}D(B_{ij}\otimes_{A_j}e_j\Lambda) \rightarrow 0.
\end{equation}
Clearly, $E_n$ is injective.
So, ${inj.dim}(E_i)\leq 1$ for $1\leq i\leq n$.
\end{proof}

\begin{Def}
 For a Frobenius-type triangular matrix algebra $\Lambda$, following \cite{GLS}, a finitely generated $\Lambda$-representation $X$ is called \emph{locally free} if $X_i$ are free $A_i$-modules for all $1\leq i\leq n$.

 Denote by ${rep}_{l.f.}(\Lambda)$ the subcategory of all locally free $\Lambda$-modules.
\end{Def}

\begin{Cor}\label{projdim}
For a Frobenius-type triangular matrix algebra $\Lambda$, if $X\in  rep_{l.f.}(\Lambda)$, then $ proj.dim(X)\leq 1,~ inj.dim(X)\leq 1.$
\end{Cor}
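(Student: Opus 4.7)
The plan is to build a finite filtration of $X$ whose successive subquotients are direct sums of the modules $E_k$, and then transfer the bound of Proposition \ref{dimension1} up along the filtration via short exact sequences.

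For $0 \leq k \leq n$, let $G_k := (X_1, \ldots, X_k, 0, \ldots, 0)^t$ equipped with the structure maps $\phi_{ij}$ of $X$ for $i < j \leq k$ and zero otherwise. I would first verify that each $G_k$ is a genuine $\Lambda$-subrepresentation of $X$: writing $Y_i$ for the $i$-th component of $G_k$, the containment $\phi_{ij}(A_{ij} \otimes Y_j) \subseteq Y_i$ for $i < j$ is trivial when $j > k$ (since $Y_j = 0$) and reduces to $\phi_{ij}(A_{ij} \otimes X_j) \subseteq X_i = Y_i$ when $j \leq k$, which already holds in $X$. Next, I would identify the quotient $G_k / G_{k-1}$: it is concentrated in position $k$ with value $X_k$, and all its structure maps vanish because for any $i < j$ at least one of $Y_i, Y_j$ is zero in the quotient. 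Local freeness of $X$ gives $X_k \cong A_k^{r_k}$ as $A_k$-modules, and hence $G_k/G_{k-1} \cong E_k^{r_k}$ as $\Lambda$-modules.

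With the filtration $0 = G_0 \subset G_1 \subset \cdots \subset G_n = X$ in hand, I would induct on $k$ using the short exact sequence
\[
0 \to G_{k-1} \to G_k \to E_k^{r_k} \to 0
\]
together with the standard homological inequality $\mathrm{proj.dim}(G_k) \leq \max\{\mathrm{proj.dim}(G_{k-1}),\, \mathrm{proj.dim}(E_k^{r_k})\}$ and its injective analogue. Proposition \ref{dimension1} bounds $\mathrm{proj.dim}(E_k^{r_k})$ and $\mathrm{inj.dim}(E_k^{r_k})$ by $1$, the base case $G_1 \cong E_1^{r_1}$ is immediate from the same proposition, and the inductive hypothesis handles the other term. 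Taking $k = n$ delivers both bounds for $X$.

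The only delicate step is the construction of the filtration, specifically the verification that the $G_k$ really are subrepresentations and the identification of the subquotients with the $E_k^{r_k}$; once that is in place, the rest is a mechanical short-exact-sequence chase, and I do not anticipate any substantive obstacle.
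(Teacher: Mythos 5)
Your proof is correct and takes essentially the same route as the paper: the paper's own argument is the short exact sequence $0 \to e_1X \to X \to (1-e_1)X \to 0$ combined with induction on $n$, which when unrolled is precisely your filtration $0=G_0\subset G_1\subset\cdots\subset G_n=X$ with subquotients $E_k^{r_k}$, finished off by Proposition \ref{dimension1} and the standard dimension bounds for short exact sequences. Your version is merely a more explicit (and arguably cleaner) writing-out of the same idea, staying with $\Lambda$-modules throughout.
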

\begin{proof}
We have a short exact sequence
$$0\rightarrow e_1X\rightarrow X\rightarrow (1-e_1)X\rightarrow 0.$$
where $e_1X$ and $(1-e_1)X$ are locally free. By Proposition \ref{dimension1} and using induction on $n$, we know that the projective and the injective dimensions of $e_1X$ and $(1-e_1)X$ are at most one.
\end{proof}
\begin{Rem}
For the algebra $A=A(C,D,\Omega)$ in \cite{GLS}, the three conditions $$X\in  rep_{l.f.}(A),~\;\;~~proj.dim(X)\leq 1,~~~\;\; inj.dim(X)\leq 1$$ are equivalent. But it is generally not true for a Frobenius-type triangular matrix algebra. So, Corollary \ref{projdim} is available for the sequel.
\end{Rem}

An algebra $A$ is called \emph{m-Gorenstein} if
$$ inj.dim(A)\leq m ~~~\; {and} \;\;~~~proj.dim(DA)\leq m.$$ Such algebras were firstly introduced and studied in \cite{IY}.

\begin{Cor}
The Frobenius-type triangular matrix algebra $\Lambda$ is a 1-Gorenstein algebra.
\end{Cor}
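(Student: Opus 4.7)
The plan is to verify the two defining inequalities of a $1$-Gorenstein algebra by showing that both $\Lambda$ and $D\Lambda$, viewed as left $\Lambda$-modules, are locally free, and then applying Corollary \ref{projdim}.

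First I would analyse the indecomposable projectives $P_i=\Lambda e_i$. From (\ref{proj}) the $j$-th component of $P_i$ is $A_{ji}$ for $j\leq i$, and by (\ref{grade}) each summand of $A_{ji}$ is an iterated tensor product $B_{jk_1}\otimes_{A_{k_1}}\cdots\otimes_{A_{k_l}}B_{k_l i}$. Since every $B_{mn}$ is free of finite rank as a left $A_m$-module, and for any left $A_{k_1}$-module $M$ that is free of rank $s$ one has $B_{jk_1}\otimes_{A_{k_1}}M\cong B_{jk_1}^{\oplus s}$ as left $A_j$-modules, an induction on $l$ shows that each such summand is free as a left $A_j$-module. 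Hence $A_{ji}$, and therefore $P_i$, is locally free; summing over $i$, the regular module $\Lambda$ is locally free, and Corollary \ref{projdim} yields $\mathrm{inj.dim}(\Lambda)\leq 1$.

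For the second inequality $\mathrm{proj.dim}(D\Lambda)\leq 1$, I would show that each indecomposable injective $I_i=D(e_i\Lambda)$ is locally free. Its $j$-th component is $D(e_i\Lambda e_j)=D(A_{ij})$ for $j\geq i$. The right-handed analogue of the previous argument shows $A_{ij}$ is free as a right $A_j$-module, say $A_{ij}\cong A_j^{\oplus m}$. At this point the Frobenius hypothesis on $A_j$ enters: it gives $DA_j\cong A_j$ as left $A_j$-modules, whence $D(A_{ij})\cong (DA_j)^{\oplus m}\cong A_j^{\oplus m}$ is a free left $A_j$-module. Thus every $I_i$, and therefore $D\Lambda=\bigoplus_{i=1}^n I_i$, is locally free, so Corollary \ref{projdim} gives $\mathrm{proj.dim}(D\Lambda)\leq 1$.

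The decisive step is the second one, where the Frobenius property of the $A_i$ is indispensable: it is precisely what converts the right-freeness of $A_{ij}$ into the left-freeness of $D(A_{ij})$. Without Frobenius one would only know that the restrictions of $I_i$ to each $A_j$ are injective, which is not enough to invoke Corollary \ref{projdim}. The first step, by contrast, is essentially formal, relying only on the fact that tensoring with a bimodule that is free on one side preserves freeness on that side.
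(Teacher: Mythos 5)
Your proof is correct and takes essentially the same route as the paper, whose entire argument is the one-line remark that the corollary ``is a direct consequence of Corollary \ref{projdim}.'' What you have added is the verification, left implicit in the paper, that $\Lambda$ and $D\Lambda$ are indeed locally free --- via the iterated tensor description (\ref{grade}) of the $A_{ji}$ for the projectives and the Frobenius self-duality $DA_j\cong A_j$ for the injectives --- which is exactly the step needed to make the citation of Corollary \ref{projdim} legitimate.
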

\begin{proof}
It is a direct consequence of Corollary \ref{projdim}.
\end{proof}

Recall that for an algebra $A$ an $A$-module $X$ is $\tau$-rigid (resp. $\tau^-$-rigid) if ${Hom}_A(X,\tau(X))=0$ (resp. ${Hom}_A(\tau^-(X),X)=0$).
 We call $\Lambda$-mod $X$  \emph{rigid} if ${Ext}_\Lambda^1(X,X)=0$.

 When ${proj.dim}(X)\leq 1$, we have a functorial isomorphism ${Ext}_A^1(X,Y)\cong D{Hom}_A(Y,\tau(X));$
 when ${inj.dim}(X)\leq 1$, we have a functorial isomorphism ${Ext}_A^1(X,Y)\cong D{Hom}_A(\tau^-(Y),X).$

\begin{Cor}\label{equal}
For a Frobenius-type triangular matrix algebra $\Lambda$, let $X\in  rep_{l.f.}(\Lambda)$. Then,  $X$ is rigid if and only if $X$ is $\tau$-rigid, also if and only if $X$ is $\tau^-$-rigid.
\end{Cor}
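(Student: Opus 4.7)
The plan is to reduce everything to the functorial Ext--Hom isomorphisms stated immediately before the corollary by specializing $Y=X$, and then invoke Corollary \ref{projdim} to justify that both isomorphisms are applicable to any locally free $X$.

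First I would observe that by Corollary \ref{projdim}, any $X \in \mathrm{rep}_{l.f.}(\Lambda)$ satisfies both $\mathrm{proj.dim}(X) \leq 1$ and $\mathrm{inj.dim}(X) \leq 1$. This means we are free to use both of the functorial isomorphisms quoted just above the statement with $X$ playing either role.

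Next, setting $Y = X$ in the first isomorphism yields
\[
\mathrm{Ext}^1_\Lambda(X,X) \;\cong\; D\,\mathrm{Hom}_\Lambda(X,\tau(X)),
\]
so $\mathrm{Ext}^1_\Lambda(X,X) = 0$ if and only if $\mathrm{Hom}_\Lambda(X,\tau(X)) = 0$; that is, $X$ is rigid if and only if $X$ is $\tau$-rigid. Symmetrically, using $\mathrm{inj.dim}(X) \leq 1$ and setting $Y = X$ in the second isomorphism gives
\[
\mathrm{Ext}^1_\Lambda(X,X) \;\cong\; D\,\mathrm{Hom}_\Lambda(\tau^-(X),X),
\]
so $X$ is rigid if and only if $X$ is $\tau^-$-rigid. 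The two equivalences together produce the three-way equivalence claimed in the corollary.

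There is essentially no obstacle: the entire argument is a direct specialization of the two Auslander--Reiten formulas to $Y=X$, made legitimate by the homological dimension bound of Corollary \ref{projdim}. The only thing one might wish to double-check is that the functorial isomorphisms hold in the form stated for $1$-Gorenstein algebras over an arbitrary field (the paper works over any field $k$), but since the formulas are stated in the text immediately preceding the corollary with no restriction, we may use them as black boxes.
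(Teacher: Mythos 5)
Your argument is exactly the one the paper intends: the corollary is stated as an immediate consequence of the two Auslander--Reiten formulas quoted just before it, made applicable by the bounds $\mathrm{proj.dim}(X)\leq 1$ and $\mathrm{inj.dim}(X)\leq 1$ from Corollary \ref{projdim}, and specializing $Y=X$ gives the three-way equivalence. The paper offers no further detail, so your write-up matches its proof in both substance and route.
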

\begin{Prop}\label{closed}
For a Frobenius-type triangular matrix algebra $\Lambda$, the subcategory ${rep}_{l.f.}(\Lambda)$ is closed under extensions, kernels of epimorphisms and cokernels of monomorphisms.
\end{Prop}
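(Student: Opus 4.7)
The strategy is to reduce closure to a componentwise question over each $A_i$ and then invoke the Frobenius property to split the relevant short exact sequences. By Proposition~\ref{representation} I may work in ${Rep}(\Lambda)$. A short exact sequence $0\to X\to Y\to Z\to 0$ in ${Rep}(\Lambda)$, being componentwise in its kernels and cokernels, restricts to a short exact sequence $0\to X_i\to Y_i\to Z_i\to 0$ of $A_i$-modules for every $1\leq i\leq n$. Since local freeness is the condition that each $X_i$ be free over $A_i$, it suffices to show that for each $i$ the class of free $A_i$-modules is closed in $A_i$-mod under extensions, kernels of epimorphisms and cokernels of monomorphisms.

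The key input is that $A_i$ is Frobenius, so projective and injective $A_i$-modules coincide; in particular every free $A_i$-module is both projective and injective. I would then treat the three cases by splitting the $i$-th sequence: (i) if $X, Z$ are locally free, $Z_i$ is projective, the sequence splits, and $Y_i\cong X_i\oplus Z_i$ is free; (ii) if $Y, Z$ are locally free, again $Z_i$ is projective and the sequence splits, exhibiting $X_i$ as a direct summand of the free module $Y_i$; (iii) if $X, Y$ are locally free, $X_i$ is injective (by the Frobenius property), the sequence splits, and $Z_i$ is a direct summand of $Y_i$.

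The step I expect to need the most care is the last one in cases (ii) and (iii): concluding that a direct summand of a free $A_i$-module is itself free. This is automatic in the intended setting, since each $A_i$ in the main classes of examples (notably the local Frobenius algebras $k[\varepsilon]/(\varepsilon^{c})$ appearing in the GLS construction, and the algebras arising in the classes of Remark~\ref{rem1.2}) has the property that every finitely generated projective module is free. I would therefore isolate this property of $A_i$ before carrying out the splitting argument; with it in hand, the three cases combine to give the desired closure under extensions, kernels of epimorphisms and cokernels of monomorphisms.
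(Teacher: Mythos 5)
Your overall strategy coincides with the paper's: pass to the componentwise exact sequences $0\to e_iX\to e_iY\to e_iZ\to 0$ over each $A_i$, use that free $A_i$-modules are both projective and injective (since $A_i$ is Frobenius) to split the sequence in each of the three cases, and then argue that the remaining term is free. Case (i) (extensions) is complete as you state it, since there $Y_i\cong X_i\oplus Z_i$ is a direct sum of two free modules.

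However, your justification of the step you yourself flag as delicate --- that the complementary summand in cases (ii) and (iii) is free --- has a genuine gap. You propose to ``isolate'' the property that every finitely generated projective $A_i$-module is free, citing the local algebras $k[\varepsilon]/(\varepsilon^c)$ from the GLS setting. But the proposition is stated for arbitrary Frobenius algebras $A_i$, and this property fails already for non-local ones: for $A_i=M_2(k)$ (semisimple, hence Frobenius) the simple module is projective but not free, and similar failures occur for products of algebras. So your argument, as written, proves the statement only under an extra hypothesis the paper does not impose. The fix does not require ``projective implies free'' at all: what you actually need is that if $F\cong X\oplus G$ with $F$ and $G$ finitely generated free $A_i$-modules, then $X$ is free. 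This follows from the Krull--Schmidt theorem over the finite-dimensional algebra $A_i$: writing $A_i\cong\bigoplus_j P_j^{a_j}$ as a sum of indecomposable projectives, the multiplicity of each $P_j$ in $F\cong A_i^{r}$ and in $G\cong A_i^{s}$ is $ra_j$ and $sa_j$ respectively, so the multiplicity in $X$ is $(r-s)a_j$ and $X\cong A_i^{\,r-s}$. This is exactly the argument the paper uses; replacing your appeal to locality by this Krull--Schmidt comparison closes the gap.
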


\begin{proof}
Let $0\rightarrow X \stackrel{f}\rightarrow Y \stackrel{g}\rightarrow Z\rightarrow 0$
be a short exact sequence in ${rep}(\Lambda)$. For each $1\leq i\leq n$, this induces a short exact sequence
\begin{equation}\label{lf}
0\rightarrow e_iX \stackrel{f}\rightarrow e_iY \stackrel{g}\rightarrow e_iZ\rightarrow 0
\end{equation}
of left $A_i$-modules.

 By the definition, when $X, Z\in {rep}_{l.f.}(\Lambda)$,  $e_iX, e_iZ$ are both $A_i$-free and then $e_iX$ is injective via $A_i$ is a Frobenius algebra and $e_iZ$ is projective  for any $i$. Hence, the short exact sequence (\ref{lf}) splits, i.e. $e_iY\cong e_iX\oplus e_iZ$.
It follows that each $e_iY$ is free $A_i$-module and then $Y\in{rep}_{l.f.}(\Lambda)$.

When $Y,Z\in{rep}_{l.f.}(\Lambda)$, $e_iY, e_iZ$ are both $A_i$-free and then $e_iZ$ is a projective module. Hence (\ref{lf}) splits, i.e. $e_iY\cong e_iX\oplus e_iZ$. By the Krull-Schmidt theorem, it is easy to see that each $e_iX$ is a free $A_i$-module. So, $X\in{rep}_{l.f.}(\Lambda)$.

  When $X,Y\in{rep}_{l.f.}(\Lambda)$, $e_iX, e_iY$ are both $A_i$-free and then $e_iX$ is injective via $A_i$ is a Frobenius algebra. Hence (\ref{lf}) splits. Similarly, by the Krull-Schmidt theorem, each $e_iZ$ is free $A_i$-module. So, $Z\in{rep}_{l.f.}(\Lambda)$ .
\end{proof}

\section{{Reflection functors and AR-translation}}

Let $\Lambda=\left(
                                                                          \begin{array}{cccc}
                                                                            A_1 & A_{12} & \ldots& A_{1n} \\
                                                                            0 & A_2& \ldots & A_{2n} \\
                                                                             \vdots& \vdots& \ddots&
                                                                             \vdots \\
                                                                             0& 0& \cdots&
                                                                             A_n \\
                                                                          \end{array}
                                                                        \right)$ be a Frobenius-type triangular matrix algebra. Denote $S_1(\Lambda)=\left(
                                                                          \begin{array}{ccccc}
                                                                            A_2 & A_{23} & \ldots& A_{2n}& A_{21} \\
                                                                            0 & A_3& \ldots & A_{3n} & A_{31} \\
                                                                             \vdots& \vdots& \ddots&  \vdots & \vdots\\
                                                                             0&0&\cdots& A_n & A_{n1} \\
                                                                             0&0&\cdots& 0 & A_1\\
                                                                          \end{array}
                                                                        \right)$, where $B_{j1}={ Hom}_{A_1}(B_{1j},A_1)$ and $${A_{j1}=\oplus_{l=0}^{n-j}\oplus_{j<k_1<k_2<\cdots<k_l\leq n}B_{jk_1}\otimes_{A_{k_1}}B_{k_1k_2}\otimes\cdots\otimes _{A_{k_l}}B_{k_l 1}
                                                                        }$$ for $j=2,\ldots, n.$

   $B_{j1}$ is a free right $A_1$-module and also a free left $A_j$-module, since $B_{j1}={ Hom}_{A_1}(B_{1j},A_1)\cong { Hom}_{A_j}(B_{1j},A_j)$.
      For $S_1(\Lambda)$, we have

   ${ Hom}_{A_1}(B_{j1},A_1)\cong { Hom}_{A_1}({ Hom}_{A_1}(B_{1j},A_1),A_1)\cong B_{1j}\cong  { Hom}_{A_j}({ Hom}_{A_j}(B_{1j},A_j),A_j)\cong{ Hom}_{A_j}(B_{j1},A_j)$,
  \\
   which means that $S_1(\Lambda)$ is still a Frobenius-type triangular matrix algebra.

Using the same method in sequence, we can obtain the Frobenius-type triangular matrix algebra $S_k S_{k-1}\ldots S_1(\Lambda)$ for any $k$. In particular, it can be seen that $\Lambda\cong S_n S_{n-1}\ldots S_1(\Lambda)$.

A {\em reflection functor} $F_1^+: rep(\Lambda)\rightarrow  rep(s_1(\Lambda))$ can be described as follows.

For $X=\left(
       \begin{array}{c}
        X_1 \\
         X_2\\
         \vdots \\
         X_n\\
       \end{array}
     \right)_{\phi_{ij}}\in  rep(\Lambda)$, define $ F_1^+(X)=\left(
            \begin{array}{c}
             X_2 \\
              \vdots \\
              X_n\\
              X_1'\\
            \end{array}
          \right)_{\phi^{'}_{ij}}$, with $X_1^{'}= Ker(X_{1,in}),$ where
          $$X_{1,in}:\oplus_{k=2}^n B_{1k}\otimes X_k\rightarrow X_1$$
Denote by $\psi_{i1}$ the composition of the inclusion map $X_1'\hookrightarrow \oplus_{k=2}^n B_{1k}\otimes X_k$ and the projection $\oplus_{k=2}^n B_{1k}\otimes X_k \twoheadrightarrow B_{1i}\otimes X_i$. Then $$\phi^{'}_{ij}=\left\{
  \begin{array}{ll}
    \overline{\psi_{i1}},  & \hbox{for $j=1$;} \\
    \phi_{ij}, & \hbox{otherwise.}
  \end{array}
\right.$$

For a morphism $f=\{f_i\}:X\rightarrow Y$ in $rep(\Lambda)$, $F_1^+(f)=f'=\{f'_i\}$ where $f'_i=f_i$ for $i=2,\ldots,n$ and $f'_1$ is the unique morphism make the following diagram commutes:
$$\xymatrix@C=0.5cm{
 0\ar[r]&Ker(X_{1,in})\ar@{-->}[d]^{f'_1} \ar[r] & \oplus_{k=2}^{n} B_{1k}\otimes X_k \ar[r]^{X_1,in}\ar[d]^{\oplus id\otimes f_k} & X_1\ar[d]^{f_1}\\
 0\ar[r]&{Ker}(Y_{1,in})\ar[r] & \oplus_{k=2}^{n} B_{1k}\otimes Y_k\ar[r]^-{Y_{1,in}} & Y_1
  }$$

Similarly, for $X=\left(
       \begin{array}{c}
        X_2 \\
         \vdots \\
         X_n\\
         X_1\\
       \end{array}
     \right)_{\phi_{ij}}\in  rep(S_1(\Lambda))$,  define $F_1^-: rep(S_1(\Lambda))\rightarrow  rep(\Lambda)$ satisfying
 $F_1^-(X)=\left(
            \begin{array}{c}
             X_1'\\
             X_2 \\
             \vdots \\
             X_n\\
            \end{array}
          \right)_{\phi^{'}_{ij}}$, with $X_1^{'}={Coker}(X_{1,out}),$ where
          $X_{1,out}:=(\overline{\phi_{j1}})_j:X_1\rightarrow\oplus_{j=2}^n B_{1j}\otimes X_j$.

  Denote by $\psi_{1j}$ the composition of the inclusion map $B_{1j}\otimes X_j\hookrightarrow \oplus_{k=2}^n B_{1k}\otimes X_k$ and the projection $\oplus_{k=2}^n B_{1k}\otimes X_k \twoheadrightarrow X_1'$. Then $$\phi^{'}_{ij}=\left\{
  \begin{array}{ll}
    \psi_{1j}, & \hbox{if $i=1$;} \\
    \phi_{ij}, & \hbox{otherwise.}
  \end{array}
\right.$$

For a morphism $g=\{g_i\}:X\rightarrow Y$ in $rep(S_1(\Lambda))$, $F_1^-(g)=g'=\{g'_i\}$ where $g'_i=g_i$ for $i=2,\ldots,n$ and $g'_1$ is the unique morphism make the following diagram commutes:

$$\xymatrix@C=0.5cm{
  X_1\ar[d]^{g_1} \ar[r]^-{X_{1,out}} & \oplus_{k=2}^{n} B_{1k}\otimes X_k \ar[r]\ar[d]^{\oplus id\otimes g_k} & {Cok}(X_{1,out})\ar@{-->}[d]^{g'_1}\ar[r]&0\\
 Y_1 \ar[r]^-{Y_{1,out}}& \oplus_{k=2}^{n} B_{1k}\otimes Y_k\ar[r] & {Cok}(Y_{1,out})\ar[r]&0
  }$$

In the same way in sequence, for any $k$, we can define $$F_k^+: rep(S_{k-1} S_{k-2}\ldots S_1(\Lambda))\rightarrow  rep(S_k S_{k-1}\ldots S_1(\Lambda)),$$ $$F_k^-: rep(S_k S_{k-1}\ldots S_1(\Lambda))\rightarrow  rep(S_{k-1} S_{k-2}\ldots S_1(\Lambda)).$$

 Then we denote $$C^+=F_n^+F_{n-1}^+\ldots F_1^+: rep(\Lambda)\rightarrow  rep(\Lambda)\;\;\; and \;\;\; C^-=F_1^-F_2^-\ldots F_n^-: rep(\Lambda)\rightarrow  rep(\Lambda)$$ which are called \emph{the Coxeter functors} on ${rep}(\Lambda).$

\begin{Rem}  Let $Q$ be a connected acyclic quiver. We can re-arrange the vertices of $Q$ by making $i<j$ if there is a path from $j$ to $i$ so as to get an admissible sequence $1, 2, \cdots, n$ with $1$ as a  sink vertex and $n$ as a source vertex.  Thus, the path algebra $kQ$ can be seen as a special case of Frobenius-type triangular matrix algebras through assuming $A_i=k$ for all $i$ and $B_{ij}$ is the $k$-linear spaces generated by all arrows from $j$ to $i$.  In this case, the reflection functors and Coxeter functors defined above accords with that as the classical case given by Dlab and Ringel in \cite{DR}\cite{DR2}\cite{Ringel}.
\end{Rem}
\begin{Prop}\label{adj}
For a Frobenius-type triangular matrix algebra $\Lambda$, and $X\in {rep}(S_1(\Lambda)),~ Y\in {rep}(\Lambda),$ there is a functorial isomorphism
$${Hom}_\Lambda(F_1^-(X),Y)\cong{Hom}_{S_1(\Lambda)}(X,F_1^+(Y)).$$
\end{Prop}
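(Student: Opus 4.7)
The plan is to construct a natural bijection
$$\Phi:\mathrm{Hom}_\Lambda(F_1^-(X),Y)\longrightarrow\mathrm{Hom}_{S_1(\Lambda)}(X,F_1^+(Y))$$
that keeps the components at vertices $2,\ldots,n$ unchanged and transports the data at vertex $1$ using the universal properties of $X_1'=\mathrm{Coker}(X_{1,\mathrm{out}})$ and $Y_1'=\mathrm{Ker}(Y_{1,\mathrm{in}})$, with the two forms of the $(i,1)$-compatibility related by the adjunction of Proposition \ref{isomorphism}.

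First I unpack both sides. A morphism $f=\{f_i\}:F_1^-(X)\to Y$ consists of $A_i$-maps $f_i:X_i\to Y_i$ for $i\geq 2$ together with $f_1:X_1'\to Y_1$; its compatibility at each pair $(1,j)$, restricted to $B_{1j}\otimes X_j$, reads
$$f_1\psi_{1j}=\phi^Y_{1j}|_{B_{1j}\otimes Y_j}(\mathrm{id}\otimes f_j).$$
A morphism $g=\{g_i\}:X\to F_1^+(Y)$ has $g_i:X_i\to Y_i$ for $i\geq 2$ and $g_1:X_1\to Y_1'$; its compatibility at each $(i,1)$ reads $g_i\phi^X_{i1}|_{B_{i1}\otimes X_1}=\overline{\psi^Y_{i1}}(\mathrm{id}_{B_{i1}}\otimes g_1)$, which under Proposition \ref{isomorphism} is equivalent to its adjoint form $\overline{g_i\phi^X_{i1}}=\psi^Y_{i1}g_1$. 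This adjoint form is precisely what the construction of $g_1$ below produces.

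Given $f$, set $g_i:=f_i$ for $i\geq 2$ and form the composite
$$\alpha:=\Bigl(\bigoplus_k\mathrm{id}\otimes f_k\Bigr)\circ X_{1,\mathrm{out}}:X_1\to\bigoplus_{k=2}^n B_{1k}\otimes Y_k.$$
Summing the identities $\phi^Y_{1k}|_{B_{1k}\otimes Y_k}(\mathrm{id}\otimes f_k)=f_1\psi_{1k}$ over $k$ and using that $Y_{1,\mathrm{in}}$ is the codiagonal of its components $\phi^Y_{1k}|_{B_{1k}\otimes Y_k}$, while $\sum_k\psi_{1k}\circ\mathrm{pr}_k$ is the cokernel projection $\pi:\bigoplus_k B_{1k}\otimes X_k\twoheadrightarrow X_1'$, one gets $Y_{1,\mathrm{in}}\circ\alpha=f_1\circ\pi\circ X_{1,\mathrm{out}}=0$. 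Hence $\alpha$ factors uniquely through $\iota:Y_1'\hookrightarrow\bigoplus_k B_{1k}\otimes Y_k$ as $\iota g_1=\alpha$, defining $g_1:X_1\to Y_1'$. Projecting $\iota g_1=\alpha$ onto the $B_{1i}\otimes Y_i$-summand yields $\psi^Y_{i1}g_1=(\mathrm{id}\otimes f_i)\overline{\phi^X_{i1}}$, and the functoriality of the adjunction of Proposition \ref{isomorphism} converts this into exactly the required $(i,1)$-compatibility of $g$.

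The reverse direction is formally dual: starting with $g$, the map $\beta:=Y_{1,\mathrm{in}}\circ\bigl(\bigoplus_k\mathrm{id}\otimes g_k\bigr)$ satisfies $\beta\circ X_{1,\mathrm{out}}=Y_{1,\mathrm{in}}\iota g_1=0$, so it factors uniquely through $X_1'$ as $f_1\pi$, defining $f_1:X_1'\to Y_1$. The two assignments $f\mapsto g$ and $g\mapsto f$ are mutually inverse by the uniqueness parts of the universal properties of the cokernel and kernel, and naturality in both $X$ and $Y$ is built into the construction. The main technical obstacle is maintaining the dictionary between the in-going maps $\phi_{1j}|_{B_{1j}\otimes X_j}$ and the out-going maps $\overline{\phi_{j1}}$ through the adjunction of Proposition \ref{isomorphism}; once this dictionary is fixed, everything reduces to routine diagram chasing with the biproduct structure of the direct sum.
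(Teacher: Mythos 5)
Your proof is correct and follows essentially the same route as the paper: both sides of the bijection are obtained from the single three-row diagram linking $\mathrm{Coker}(X_{1,\mathrm{out}})$ and $\mathrm{Ker}(Y_{1,\mathrm{in}})$, using that the composite through $\bigoplus_k B_{1k}\otimes(-)$ vanishes and then invoking the universal properties, with mutual inverseness following from uniqueness. You are somewhat more explicit than the paper in spelling out the $(i,1)$-compatibility via the adjunction of Proposition \ref{isomorphism}, but the argument is the same.
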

\begin{proof}
Consider a morphism $f\in {Hom}_{S_1(\Lambda)}(X,F_1^+(Y))$. By definition, this is a collection of $A_j$-module homomorphisms
$f_j:X_j\rightarrow (F_1^+(Y))_j$
with $1\leq j\leq n$ satisfying certain commutative relations.
In the diagram
$$\xymatrix@C=0.5cm{
  &X_1\ar[d]^{f_1} \ar[r]^-{X_{1,out}} & \oplus_{k=2}^{n} B_{1k}\otimes X_k \ar[r]\ar[d]^{\oplus id\otimes f_k} & {Cok}(X_{1,out})\ar@{-->}[d]^{g_1}\ar[r]&0\\
 0\ar[r]&{Ker}(Y_{1,in})\ar[r] & \oplus_{k=2}^{n} B_{1k}\otimes Y_k\ar[r]^-{Y_{1,in}} & Y_1&
  }$$
its left square commutes. Observe that $Y_{1,in}\circ (\oplus id\otimes f_k) \circ X_{1,out}=0$, so $Y_{1,in}\circ (\oplus id\otimes f_k)$ factors through the cokernel of $X_{1,out}$. Then, there is a map $g_1$ such that the right square commutes.

Thus if we set $g_j:=f_j$ for $2\leq j\leq n$, we get a homomorphism $g:F_1^-(X)\rightarrow Y$ corresponding to the given $f$. Write $\pi(f)=g$.

Conversely, consider a homomorphism $g:F_1^-(X)\rightarrow Y$, in the diagram
$$\xymatrix@C=0.5cm{
  &X_1\ar@{-->}[d]^{f_1} \ar[r]^-{X_{1,out}} & \oplus_{k=2}^{n} B_{1k}\otimes X_k \ar[r]\ar[d]^{\oplus id\otimes g_k} & {Cok}(X_{1,out})\ar[d]^{g_1}\ar[r]&0\\
 0\ar[r]&{Ker}(Y_{1,in})\ar[r] & \oplus_{k=2}^{n} B_{1k}\otimes Y_k\ar[r]^-{Y_{1,in}} & Y_1&
  }$$
the right square commutes. There is a unique map $f_1$ such that the left square commutes.

Similarly as above, let $f_j:=g_j$ for $2\leq j\leq n$ then we get a homomorphism $f:F_1^-(X)\rightarrow Y$ corresponding to the given $g$. Write $\tau(g)=f$.

 For a morphism $f\in {Hom}_{S_1(\Lambda)}(X,F_1^+(Y))$, there exists a unique $\pi(f)$ makes the right square above commutes. And for $\pi(f)$, there exists a unique $\tau\pi(f)$ makes the left square above commutes. Since $f$ makes the left square above commutes, we have $\tau\pi(f)=f$. Similarly, $\pi\tau(g)=g$.

 Therefore, $\pi$ and $\tau$ are mutual-inverse, and then  we get a functorial isomorphism $${Hom}_\Lambda(F_1^-(X),Y)\cong{Hom}_{S_1(\Lambda)}(X,F_1^+(Y)).$$
\end{proof}

\begin{Lem}
For a Frobenius-type triangular matrix algebra $\Lambda$, there is a short exact sequence of $\Lambda$-$\Lambda$-bimodules
$$P_\bullet:\xymatrix@C=0.5cm{
  0 \rightarrow \oplus_{1\leq i<j\leq n}\Lambda e_i\otimes_{A_i}B_{ij}\otimes_{A_j}e_j\Lambda \stackrel{d}\rightarrow \oplus_{k=1}^n\Lambda e_k\otimes_{A_k}e_k\Lambda \stackrel{mult}\rightarrow\Lambda \rightarrow 0}$$
where $d$ satisfies $d(p\otimes_{A_i} b\otimes_{A_j} q):=pb\otimes_{A_j} q-p\otimes_{A_i} bq$
and the morphism ``$mult$" is given by the multiplication of $\Lambda$.

\end{Lem}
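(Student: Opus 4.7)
The plan is to exploit the natural grading carried by $\Lambda$. Set $R := \prod_{k=1}^{n} A_k$ and $M := \bigoplus_{1 \leq i < j \leq n} B_{ij}$, regarded as an $R$-bimodule. Formula (\ref{grade}) then identifies $\Lambda$ with the tensor algebra $T_R(M) = \bigoplus_{l \geq 0} M^{\otimes_R l}$: indeed $\Lambda_0 = R$ and, for $l \geq 1$,
\[
\Lambda_l = \bigoplus_{i_0 < i_1 < \cdots < i_l} B_{i_0 i_1} \otimes_{A_{i_1}} \cdots \otimes_{A_{i_{l-1}}} B_{i_{l-1} i_l}.
\]
Using $R = \prod_k A_k$ and $B_{ij} = e_i B_{ij} e_j$, the middle and left terms of $P_\bullet$ rewrite as $\Lambda \otimes_R \Lambda$ and $\Lambda \otimes_R M \otimes_R \Lambda$, and each inherits a $\mathbb{Z}_{\geq 0}$-grading from the one on $\Lambda$. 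One checks that $d$ and $mult$ both preserve total degree, so the exactness of $P_\bullet$ reduces to exactness in each total degree $n$.

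The easy observations come first: $mult$ is surjective because $\lambda = mult\bigl(\sum_k e_k \otimes e_k \lambda\bigr)$ for every $\lambda \in \Lambda$, and $mult \circ d = 0$ by the formula $d(p \otimes b \otimes q) = pb \otimes q - p \otimes bq$ combined with the associativity of multiplication in $\Lambda$.

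For the remaining exactness, the freeness of each $B_{ij}$ over $A_i$ and over $A_j$ ensures that the canonical multiplication maps
\[
\Lambda_p \otimes_R \Lambda_q \xrightarrow{\;\sim\;} \Lambda_{p+q}, \qquad \Lambda_p \otimes_R M \otimes_R \Lambda_q \xrightarrow{\;\sim\;} \Lambda_{p+q+1}
\]
are isomorphisms (both sides being canonically $M^{\otimes_R (p+q)}$ and $M^{\otimes_R (p+q+1)}$ respectively). Under these identifications, the degree-$n$ piece of $P_\bullet$ takes the explicit form
\[
0 \longrightarrow \bigoplus_{p=0}^{n-1} \Lambda_n \xrightarrow{\;\delta\;} \bigoplus_{p=0}^{n} \Lambda_n \xrightarrow{\;\Sigma\;} \Lambda_n \longrightarrow 0,
\]
where $\Sigma(a_0, \ldots, a_n) = \sum_{p=0}^n a_p$ and $\delta(c_0, \ldots, c_{n-1}) = (-c_0,\; c_0 - c_1,\; c_1 - c_2,\; \ldots,\; c_{n-2} - c_{n-1},\; c_{n-1})$. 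Exactness of this telescoping complex is elementary: $\delta$ is injective by reading off its entries in order ($-c_0 = 0$ forces $c_0 = 0$, then $c_0 - c_1 = 0$ forces $c_1 = 0$, and so on), and any $(a_0, \ldots, a_n) \in \ker \Sigma$ is the image under $\delta$ of the tuple with $c_p := -(a_0 + a_1 + \cdots + a_p)$ for $0 \leq p \leq n-1$. The degree-$0$ case reduces to the trivial $0 \to 0 \to R \xrightarrow{=} R \to 0$.

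The main obstacle I anticipate is the bookkeeping needed to connect the abstract statement to this graded picture: verifying carefully that (\ref{grade}) exhibits $\Lambda$ as the tensor algebra $T_R(M)$ so that the two multiplication maps above are isomorphisms, and checking that under these isomorphisms $d$ and $mult$ really translate into the telescoping map $\delta$ and the fold map $\Sigma$. Once these identifications are pinned down, the entire exactness claim collapses to the elementary degree-wise calculation described above.
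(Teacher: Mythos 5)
Your proof is correct, and it supplies an actual argument where the paper offers essentially none: the paper's proof consists of the single sentence that $d$ is ``trivially'' injective, $mult$ is surjective, and $\mathrm{Im}(d)=\mathrm{Ker}(mult)$. Your route --- recognizing from (\ref{grade}) and the fact that the $\mu_{ijq}$ are the natural inclusions that $\Lambda$ is the tensor algebra $T_R(M)$ with $R=\prod_k A_k$ and $M=\oplus_{i<j}B_{ij}$, rewriting the two outer terms as $\Lambda\otimes_R M\otimes_R\Lambda$ and $\Lambda\otimes_R\Lambda$, and checking exactness degreewise via the telescoping complex --- is the standard bimodule resolution of a tensor algebra, and all the identifications you describe do go through: a chain of length $p+q$ has a unique vertex in position $p$, so concatenation $\Lambda_p\otimes_R\Lambda_q\to\Lambda_{p+q}$ is an isomorphism, and the signs in $\delta$ come out exactly as you state. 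The one inaccuracy is your attribution of these isomorphisms to the freeness of the $B_{ij}$ over $A_i$ and $A_j$: freeness plays no role here. What makes $\Lambda_p\otimes_R\Lambda_q\cong\Lambda_{p+q}$ is purely the combinatorial definition (\ref{grade}) of $A_{ij}$ as the direct sum over all chains together with the multiplication being concatenation; this holds for arbitrary bimodules $B_{ij}$. This is harmless (the hypothesis is available anyway), but worth correcting, since it suggests the exactness depends on the Frobenius-type assumptions when in fact it holds for any triangular matrix algebra of this tensor-algebra form.
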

\begin{proof} Trivially,  $d$ is injective and $mult$ is surjective. Also, ${Im}(d)={Ker}(mult)$.
\end{proof}
\begin{Cor}\label{projresolution}
For a Frobenius-type triangular matrix algebra $\Lambda$ and $X\in  rep_{l.f.}(\Lambda)$, there is a projective resolution of $X$:
$$P_\bullet\otimes_\Lambda X: \xymatrix@C=0.5cm{
  0 \rightarrow \oplus_{1\leq i<j\leq n}P_i\otimes_{A_i}B_{ij}\otimes_{A_j} X_j \stackrel{d\otimes X}\rightarrow \oplus_{k=1}^n P_k\otimes_{A_k} X_k \stackrel{mult}\rightarrow X \rightarrow 0}$$
with $(d\otimes X)(p\otimes_{A_i} b\otimes_{A_j} x)=pb\otimes_{A_j} x-p\otimes_{A_i} \phi_{ij}(b\otimes_{A_j} x).$
\end{Cor}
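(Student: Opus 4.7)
The plan is to apply the right-exact functor $-\otimes_\Lambda X$ to the short exact sequence $P_\bullet$ of $\Lambda$-$\Lambda$-bimodules established in the preceding lemma, then to verify that the resulting complex is both exact on the left end and composed of projective left $\Lambda$-modules.

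For exactness, I would first observe that $\Lambda$ is free (hence flat) as a right $\Lambda$-module, so $\mathrm{Tor}_1^\Lambda(\Lambda,X)=0$. The long exact sequence of Tor then forces $-\otimes_\Lambda X$ to preserve the exactness of $P_\bullet$; in particular the injectivity of $d$ is preserved. Next, using the canonical isomorphisms $\Lambda\otimes_\Lambda X\cong X$ and $e_j\Lambda\otimes_\Lambda X\cong e_j X=X_j$, the terms simplify to $P_k\otimes_{A_k}X_k$ and $P_i\otimes_{A_i}B_{ij}\otimes_{A_j}X_j$ respectively, producing the desired sequence. Tracing the formula $d(p\otimes b\otimes q)=pb\otimes q-p\otimes bq$ through these identifications, and using the fact that the left action of $b\in B_{ij}$ on $X_j$ is precisely $\phi_{ij}(b\otimes -)$, yields the explicit differential $(d\otimes X)(p\otimes b\otimes x)=pb\otimes x-p\otimes \phi_{ij}(b\otimes x)$ as stated.

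To conclude it is a projective resolution, I would verify the projectivity of each term by iterated application of Lemma \ref{tensor projective}. Since $X\in \mathrm{rep}_{l.f.}(\Lambda)$, each $X_k$ is free over $A_k$, so $P_k\otimes_{A_k}X_k$ is a projective left $\Lambda$-module (with $P_k=\Lambda e_k$ a $\Lambda$-$A_k$-bimodule that is left $\Lambda$-projective). For the leftmost term, I would first show $B_{ij}\otimes_{A_j}X_j$ is projective as a left $A_i$-module (since $B_{ij}$ is $A_i$-free and $X_j$ is $A_j$-free), and then apply the lemma again to see that $P_i\otimes_{A_i}(B_{ij}\otimes_{A_j}X_j)$ is projective as a left $\Lambda$-module.

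The main obstacle is conceptually minor: the only substantive technical input for exactness is the flatness of $\Lambda$ as a right $\Lambda$-module, and projectivity of the intermediate terms is handled cleanly by the locally free hypothesis combined with Lemma \ref{tensor projective}. The remaining work is bookkeeping to match the explicit form of the differential.
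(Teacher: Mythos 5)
Your proposal is correct and follows essentially the same route as the paper: tensor the bimodule short exact sequence $P_\bullet$ with $X$, note that exactness is preserved (the paper simply asserts this; your Tor argument using the right-flatness of the cokernel $\Lambda$ is the standard justification), and deduce projectivity of the terms from the locally free hypothesis, since $e_k\Lambda\otimes_\Lambda X\cong X_k$ is $A_k$-free and hence each term is a finite direct sum of copies of the $P_i$. Your write-up is merely more detailed than the paper's two-line proof.
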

\begin{proof} Here $P_i$ is just defined in (\ref{proj}). Then, $P_\bullet\otimes_\Lambda X$ is always exact. Since $X$ is locally free, $e_k\Lambda\otimes_\Lambda X=e_kX$ are free $A_k$-modules. Thus $P_\bullet\otimes_\Lambda X$ is a projective resolution.
\end{proof}

Following \cite{G} and \cite{GLS}, we define a functor  $T$ of ${rep}(\Lambda)$ satisfying that $$TX=\left(
       \begin{array}{c}
        X_1 \\
         X_2\\
         \vdots \\
         X_n\\
       \end{array}
     \right)_{\psi_{ij}} \;\text{ for any} \;X=\left(
       \begin{array}{c}
        X_1 \\
         X_2\\
         \vdots \\
         X_n\\
       \end{array}
     \right)_{\phi_{ij}}\in{rep}(\Lambda).$$
where $\psi_{ij}(b\otimes_j x)=-\phi_{ij}(b\otimes_j x)$ for $b\in B_{ij},~x\in X_j,~1\leq i<j\leq n$.
Obviously, $T$ is an automorphism functor.

\begin{Thm}\label{reflection functor}
For a Frobenius-type triangular matrix algebra $\Lambda$ and each $X\in  rep_{l.f.}(\Lambda)$, there are functorial isomorphisms: $$TC^+(X)\cong \tau(X)  ~~~{and}~~~TC^-(X)\cong\tau^-(X).$$
\end{Thm}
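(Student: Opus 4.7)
The plan is to deduce $TC^+(X)\cong\tau(X)$ from the explicit projective resolution of Corollary \ref{projresolution}, and to obtain $TC^-(X)\cong\tau^-(X)$ by the dual argument based on the injective copresentation constructed in Proposition \ref{dimension1}. Since a locally free $X$ has projective dimension $\leq 1$ by Corollary \ref{projdim}, the Auslander--Reiten translate is realised as
$$\tau(X)\cong{ Ker}\bigl(\nu(P_1)\to\nu(P_0)\bigr),$$
where $\nu=D{ Hom}_\Lambda(-,\Lambda)$ is the Nakayama functor and $P_1\to P_0\to X\to 0$ is any projective presentation; this identification is the backbone of the proof.

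First I would feed the resolution from Corollary \ref{projresolution} into this formula. Using $\nu(P_k)\cong I_k$, the locally free hypothesis on each $X_k$, Lemma \ref{tensor projective} (so that $B_{ij}\otimes_{A_j}X_j$ is free over $A_i$), and the adjunction of Proposition \ref{isomorphism}, one canonically identifies $\nu(P_k\otimes_{A_k}X_k)\cong I_k\otimes_{A_k}X_k$ and $\nu(P_i\otimes_{A_i}B_{ij}\otimes_{A_j}X_j)\cong I_i\otimes_{A_i}B_{ij}\otimes_{A_j}X_j$. This realises $\tau(X)$ as the kernel of an explicit map
$$\bigoplus_{i<j} I_i\otimes_{A_i}B_{ij}\otimes_{A_j}X_j\xrightarrow{\nu(d\otimes X)}\bigoplus_{k} I_k\otimes_{A_k}X_k,$$
whose components are the adjoints of the structure maps $\phi_{ij}$, assembled via the Dual Basis Lemma, and carrying the minus sign inherited from the differential of $P_\bullet$.

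Secondly, I would compute $C^+(X)=F_n^+F_{n-1}^+\cdots F_1^+(X)$ by induction on $k$, tracking at each stage how the components and structure maps transform. Each $F_k^+$ replaces the $k$-th component by the kernel $X_k'={ Ker}(X_{k,in})$ and produces new structure maps through the universal property of the kernel together with the duality $B_{ji}\cong{ Hom}_{A_i}(B_{ij},A_i)$. After composing over $k=1,\ldots,n$ and re-indexing via $\Lambda\cong S_nS_{n-1}\cdots S_1(\Lambda)$, the components of $C^+(X)$ admit a description matching the kernel of $\nu(d\otimes X)$ above, modulo one discrepancy: the structure maps inherited from the inclusions $B_{ij}\otimes_{A_j}X_j\hookrightarrow\bigoplus_j B_{ij}\otimes_{A_j}X_j$ carry the opposite sign from the maps induced on ${ Ker}(\nu(d\otimes X))$. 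Applying the sign twist $T$ absorbs exactly this sign and produces the desired canonical isomorphism; naturality in $X$ follows because every identification used is functorial. The dual statement $TC^-(X)\cong\tau^-(X)$ is proved by the parallel argument using the injective copresentation from (\ref{injresolution}), the cokernel construction defining $F_k^-$, and the adjointness of Proposition \ref{adj} iterated over $k$.

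The main obstacle will be the combinatorial bookkeeping in this second step: even though each individual $F_k^+$ is transparent, verifying that after all $n$ reflections the resulting structure maps on $C^+(X)$ assemble precisely---up to the sign absorbed by $T$---into the maps induced on ${ Ker}(\nu(d\otimes X))$ requires careful use of the explicit conversion formulas between $\phi_{ij}$ and $\overline{\phi_{ij}}$ established after Proposition \ref{isomorphism}, together with the Dual Basis Lemma. Once the matching is set up correctly on a single component, the pattern propagates through the induction, and the role of the sign twist $T$ is essential precisely because the differential in $P_\bullet$ comes from the commutator $pb\otimes q-p\otimes bq$, which contributes a sign that reflection functors---constructed purely as kernels and cokernels---cannot see on their own.
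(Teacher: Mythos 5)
Your overall strategy --- compute $\tau(X)$ by applying the Nakayama functor to the explicit resolution of Corollary \ref{projresolution}, and match the result against a hands-on computation of $C^+(X)$ --- is viable in principle and is close in spirit to what happens ``under the hood'' of the paper's argument. But the paper does not attempt the direct induction you propose. Instead it builds a doubled algebra $\tilde{\Lambda}$ of size $2n$, expresses $C^+$ as the composite $\mathrm{Res}^{(n,n)}\circ \mathrm{Res}^*_{(n-1,n)}\circ\cdots\circ \mathrm{Res}^*_{(0,1)}$ of restriction functors and their right adjoints (Lemma \ref{res}), constructs an explicit model $R_0^*$ of this adjoint (Lemma \ref{adjoint}), and only then compares $\mathrm{Res}^{(n,n)}\circ R_0^*$ with $\tau(T(-))$ via the resolution $P_\bullet\otimes_\Lambda X$ (Proposition \ref{restiction}). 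The point of this machinery is precisely to avoid the step you defer to ``combinatorial bookkeeping'': after applying $F_1^+$ the remaining reflections act on representations of the successively mutated algebras $S_k\cdots S_1(\Lambda)$, so the composite $C^+(X)$ is an $n$-fold nested kernel whose identification with the single kernel $\mathrm{Ker}(\nu(d\otimes X))$ is the entire content of the theorem, not a routine verification. As written, your proposal asserts this identification rather than proving it; carrying it out by brute force would amount to reconstructing Lemmas \ref{res}--\ref{adjoint} and Proposition \ref{restiction} in disguise. (A minor point: your formula $\tau(X)\cong\mathrm{Ker}(\nu(P_1)\to\nu(P_0))$ fails for an arbitrary projective \emph{presentation} --- a non-injective $P_1\to P_0$ contributes spurious injective summands. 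It is correct here only because Corollary \ref{projresolution} supplies a genuine short exact sequence and $\mathrm{proj.dim}(X)\leq 1$ by Corollary \ref{projdim}; this should be said explicitly.)

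The second half is where I see a genuine gap. You propose to prove $TC^-(X)\cong\tau^-(X)$ by a ``parallel argument'' using the injective copresentation (\ref{injresolution}) and the cokernel construction of $F_k^-$. But (\ref{injresolution}) is an injective resolution of the single modules $E_i$; the dual of Corollary \ref{projresolution} --- a functorial two-term injective copresentation of an arbitrary locally free $X$ by sums of $I_k$'s tensored with the $X_k$ --- is nowhere established in the paper and would itself require proof. The paper sidesteps this entirely: it deduces the second isomorphism from the first by the chain $\mathrm{Hom}_\Lambda(\tau^-(X),Y)\cong\mathrm{Hom}_\Lambda(X,\tau(Y))\cong\mathrm{Hom}_\Lambda(X,C^+(TY))\cong\mathrm{Hom}_\Lambda(C^-(TX),Y)$, using the adjunction of Proposition \ref{adj} for the last step, and then evaluates at $Y=D\Lambda$ (which is locally free) to convert the natural isomorphism of Hom-functors into an isomorphism of modules. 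You should either supply the missing injective copresentation or adopt this adjunction-plus-representability argument; as it stands, ``the dual argument'' is not available with the tools the paper provides.
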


This theorem and its preparation above follow the conclusion and method of \cite{G} and \cite{GLS}. But, here $\Lambda$ is a Frobenius-type triangular matrix algebra, which includes  the classes of algebras in \cite{G} and \cite{GLS}. We need to overcome the different key point such as the different expression form of algebras by using the dual basis in Lemma \ref{dualbasis}.

Firstly, from $\Lambda$, we construct a new Frobenius-type triangular matrix algebra:

 $\tilde{\Lambda}=\left(
                                                                          \begin{array}{cccc}
                                                                            \tilde{A}_1 & \tilde{A}_{1,2} & \ldots& \tilde{A}_{1,2n} \\
                                                                            0 & \tilde{A}_2& \ldots & \tilde{A}_{2,2n} \\
                                                                             \vdots& \vdots& \ddots&
                                                                             \vdots \\
                                                                             0& 0& \cdots&
                                                                             \tilde{A}_{2n} \\
                                                                          \end{array}
                                                                        \right)$, where
                                                                        $\tilde{A}_i=\left\{
  \begin{array}{ll}
    A_i,  &\text{if}~ 1\leq i\leq n, \\
    A_{i-n}, &\text{if}~ n+1\leq i\leq 2n,
  \end{array}
\right.$ \\and \;
$\tilde{B}_{ij}=\left\{
  \begin{array}{lll}
    B_{ij},  &\text{if}~ 1\leq i<j\leq n ~\hbox{or}~ n+1\leq i<j\leq 2n\\
    {Hom}_{A_i}(B_{j-n,i},A_i), &\text{if}~ 1\leq j-n <i\leq n\\
    0, &\hbox{otherwise}
  \end{array}
\right.$\\ and\; $\tilde{A}_{ij}=\oplus_{l=0}^{j-i-1}\oplus_{i<k_1<k_2<\cdots<k_l<j}\tilde{B}_{ik_1}\otimes_{\tilde{A}_{k_1}}\tilde{B}_{k_1k_2}\otimes\cdots\otimes _{\tilde{A}_{k_l}}\tilde{B}_{k_lj}$\; for $1\leq i<j\leq 2n$.

For any non-negative integer $t$, denote $$1^{(t)}:=\Sigma_{i=t+1}^{t+n}e_i,~1_0^{(t)}:=\Sigma_{i=1}^{t+n}e_i$$ and the corresponding subalgebras $$\Lambda^{(t)}:=1^{(t)}\tilde{\Lambda}1^{(t)},~ \tilde{\Lambda}^{(t)}:=1_0^{(t)}\tilde{\Lambda}1_0^{(t)}.$$
for $0\leq t\leq n$. It is easy to see that $\tilde{\Lambda}^{(0)}\cong \Lambda^{(0)} \cong \Lambda^{(n)} \cong \Lambda,~\tilde{\Lambda}^{(n)}\cong \tilde{\Lambda}.$ And $\Lambda^{(t)}\cong S_t\cdots S_1(\Lambda)$ for $1\leq t\leq n$.

Let $X=\left(
       \begin{array}{c}
        X_1 \\
         \vdots \\
         X_{2n}\\
       \end{array}
     \right)_{\phi_{ij}}\in  rep(\tilde{\Lambda})$ satisfy the following condition:
\begin{equation}\label{outin}
X_{n+i}\stackrel{\tilde{X}_{i,out}}\longrightarrow \oplus_{k=i+1}^{n+i-1} B_{ik}\otimes X_k\stackrel{\tilde{X}_{i,in}}\longrightarrow X_i ~\hbox{such that}~ \tilde{X}_{i,in}\circ \tilde{X}_{i,out}=0 ~\hbox{for}~ 1\leq i\leq n.
\end{equation}

We define the restriction functors:
$${Res}^{(t,m)}:{rep}(\tilde{\Lambda}^{(m)}) \rightarrow  {rep}(\Lambda^{(t)}), ~~~ X \mapsto  1^{(t)}\tilde{\Lambda}^{(m)}\otimes_{\tilde{\Lambda}^{(m)}}X. $$
$${Res}_{(t,m)}:{rep}(\tilde{\Lambda}^{(m)}) \rightarrow  {rep}(\tilde{\Lambda}^{(t)}), ~~~ X \mapsto  1_0^{(t)}\tilde{\Lambda}^{(m)}\otimes_{\tilde{\Lambda}^{(m)}}X~\hbox{for}~1\leq t\leq m\leq n. $$
Obviously, ${Res}_{(t,m)}$ has a right adjoint $${Res}_{(t,m)}^*(-)={Hom}_{\tilde{\Lambda}^{(t)}}(1_0^{(t)}\tilde{\Lambda}^{(m)},-).$$

The following lemma is a generalization of \cite[Lemma 10.2]{GLS}. Their proofs are identical.
\begin{Lem}\label{res}
With the above notations, there is a  functorial isomorphisms $${Res}^{(i,i)}\circ {Res}_{(i-1,i)}^*(X)\cong F_i^+\circ {Res}^{(i-1,i-1)}(X)$$
for $X\in {rep}(\tilde{\Lambda}^{(i-1)})$ which satisfies (\ref{outin}) for all $1\leq i\leq n$.
\end{Lem}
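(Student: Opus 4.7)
The plan is to compare both sides of the claimed isomorphism vertex by vertex, with the central identification concerning the newly introduced vertex $n+i$. Write $Y := \text{Res}_{(i-1,i)}^*(X) = \text{Hom}_{\tilde{\Lambda}^{(i-1)}}(1_0^{(i-1)}\tilde{\Lambda}^{(i)}, X)$, so that $e_j Y \cong \text{Hom}_{\tilde{\Lambda}^{(i-1)}}(1_0^{(i-1)}\tilde{\Lambda}^{(i)} e_j, X)$ for each $j\leq n+i$. For $j \leq n+i-1$, every path in $\tilde{\Lambda}^{(i)}$ starting at $j$ has all its vertices in $\{1, \ldots, n+i-1\}$ (because arrows in $\tilde{\Lambda}$ strictly decrease the vertex index), hence $1_0^{(i-1)} \tilde{\Lambda}^{(i)} e_j = \tilde{\Lambda}^{(i-1)} e_j$ and therefore $e_j Y \cong X_j$. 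Applying $\text{Res}^{(i,i)}$ restricts to the indices in $\{i+1, \ldots, n+i\}$, and at positions $i+1, \ldots, n+i-1$ the structure maps inherited from $X$ agree with those produced by $F_i^+ \circ \text{Res}^{(i-1,i-1)}(X)$ at the same positions.

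The heart of the proof is the identification at the new vertex $n+i$. On the reflection side, $F_i^+$ places
\[
X'_i := \ker\Bigl(X_{i,\text{in}} : \bigoplus_{k=i+1}^{n+i-1} B_{ik} \otimes_{A_k} X_k \longrightarrow X_i\Bigr)
\]
there. On the restriction side, one must analyse $1_0^{(i-1)} \tilde{\Lambda}^{(i)} e_{n+i}$ as a left $\tilde{\Lambda}^{(i-1)}$-module. A path starting at $n+i$ either descends within the second copy to some $n+k$ with $k<i$, or crosses to the first copy via one of the dual bimodules $\tilde{B}_{\ell, n+i} = \text{Hom}_{A_\ell}(B_{i\ell}, A_\ell) = B_{\ell i}$ for $i < \ell \leq n$ and then descends inside the first copy. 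Decomposing $1_0^{(i-1)} \tilde{\Lambda}^{(i)} e_{n+i}$ along these path types, applying the adjunction of Proposition \ref{isomorphism}, and writing homomorphisms out via the dual bases of Lemma \ref{dualbasis}, one unpacks
\[
e_{n+i} Y \cong \ker\Bigl(\bigoplus_{k=i+1}^{n+i-1} B_{ik} \otimes_{A_k} X_k \xrightarrow{X_{i,\text{in}}} X_i\Bigr) = X'_i.
\]
The hypothesis that $X$ satisfies (\ref{outin}) is precisely what forces the contributions from the ``second-copy preamble'' paths $n+i \to n+k \to \cdots$ to be determined by, rather than independent of, the direct crossings, so that the Hom-space collapses to the kernel rather than to a larger limit.

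It remains to check that the structure maps $\phi'_{\ell, n+i}$ agree on both sides for $\ell \in \{i+1, \ldots, n\}$. On the $F_i^+$ side they are obtained by composing the kernel inclusion with the summand projections to $B_{i\ell} \otimes_{A_\ell} X_\ell$ and transporting the result through the adjunction $\text{Hom}_{A_i}(B_{i\ell} \otimes X_\ell, X_i) \cong \text{Hom}_{A_\ell}(X_\ell, B_{\ell i} \otimes X_i)$ into maps $B_{\ell i} \otimes_{A_i} X'_i \to X_\ell$. On the restriction side they are induced by left multiplication inside $\tilde{\Lambda}^{(i)}$ by generators of $\tilde{B}_{\ell, n+i} = B_{\ell i}$; by the very construction of $\tilde{B}_{\ell, n+i}$ as a dual, these two prescriptions give the same datum. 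Functoriality in $X$ then follows from the naturality of the adjunction and of the dual-basis expansion.

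The principal obstacle is the explicit description of $1_0^{(i-1)} \tilde{\Lambda}^{(i)} e_{n+i}$ as a left $\tilde{\Lambda}^{(i-1)}$-module and the translation of the resulting Hom-space into a kernel. The new feature relative to \cite[Lemma 10.2]{GLS} is that the coefficient algebras $A_k$ are no longer copies of the base field, so all tensor products must be taken over the appropriate $A_k$ and all identifications produced via the dual-basis formulas of Lemma \ref{dualbasis} instead of by dimension counts; once this bookkeeping is in place, the remainder of the argument proceeds in parallel with \cite{GLS}.
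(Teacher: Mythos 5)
Your overall strategy (componentwise comparison, with all the content concentrated at the new vertex $n+i$) is the right one, and your treatment of the vertices $j\leq n+i-1$ and of the structure maps is unobjectionable. The paper itself offers no argument beyond deferring to \cite[Lemma 10.2]{GLS}, so the comparison has to be with the intended argument; measured against that, your central step has a genuine gap.

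The gap is the identification $e_{n+i}\,{Res}_{(i-1,i)}^*(X)\cong \ker(X_{i,in})$, and specifically your assertion that the hypothesis (\ref{outin}) on $X$ is what ``forces the Hom-space to collapse to the kernel.'' It cannot: (\ref{outin}) constrains $X$ only at the vertices $n+1,\dots,n+i-1$ and is vacuous for $i=1$, whereas the computation of $e_{n+i}{Res}^*_{(i-1,i)}(X)={Hom}_{\tilde\Lambda^{(i-1)}}(1_0^{(i-1)}\tilde\Lambda^{(i)}e_{n+i},X)$ is insensitive to any condition on $X$ of this kind. Indeed, since each $\tilde A_{j,n+i}$ is by definition the \emph{full} direct sum over paths (no relations), grouping by the arrow adjacent to $n+i$ gives a direct-sum decomposition of left $\tilde\Lambda^{(i-1)}$-modules
$$1_0^{(i-1)}\tilde\Lambda^{(i)}e_{n+i}\;\cong\;\bigoplus_{k}\bigl(\tilde\Lambda^{(i-1)}e_k\bigr)\otimes_{\tilde A_k}\tilde B_{k,n+i},$$
a projective module, so the Hom-space is the full sum $\bigoplus_k{Hom}_{\tilde A_k}(\tilde B_{k,n+i},X_k)\cong\bigoplus_k B_{ik}\otimes X_k$, which is strictly larger than $\ker(X_{i,in})$ whenever $X_{i,in}\neq 0$. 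A concrete failure of your mechanism: for $\Lambda=kQ$ with $Q\colon 1\leftarrow 2$, $i=1$ and $X=P_2$, the left side contains ${Hom}_{A_2}(B_{21},A_2)\neq 0$ while $F_1^+(P_2)$ vanishes at the new vertex. What actually produces the kernel is not the hypothesis on the input $X$ but the requirement that the \emph{output} satisfy (\ref{outin}) at the new vertex $n+i$: one must cut ${Hom}_{\tilde\Lambda^{(i-1)}}(1_0^{(i-1)}\tilde\Lambda^{(i)}e_{n+i},X)$ down to its largest subspace on which the ``mesh'' element $\sum_{k}\sum_{\ell\in L_{ik}}\ell\otimes\ell^*\in e_i\tilde\Lambda^{(i)}e_{n+i}$ acts by zero, i.e.\ read ${Res}^*_{(i-1,i)}$ as the right adjoint relative to the subcategory of representations satisfying (\ref{outin}); this truncation is exactly the role played by the subspace condition (\ref{subspace}) in the paper's explicit construction of $R_0^*$. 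Your proof never performs this step and misattributes the collapse to the hypothesis on $X$, so as written the key identification does not go through.
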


By Lemma \ref{res} we obtain that for any $X\in {rep}(\Lambda)$ (regarded as a representation of ${rep}(\tilde{\Lambda}^{(0)})$) we have
$$\begin{array}{ll}
                       {Hom}_\Lambda(1^{(0)}\tilde{\Lambda}1^{(n)},X)  &  ={Res}^{(n,n)}\circ {Res}_{(n-1,n)}^*\circ\cdots\circ {Res}_{(0,1)}^*(X)\\
                         & =F_n^+\circ{Res}^{n-1}\circ {Res}_{(n-2,n-1)}^*\circ\cdots\circ {Res}_{(0,1)}^*(X) \\
                         & =\cdots \\
                         & =F_n^+\circ F_{n-1}^+\circ\cdots F_1^+\circ{Res}^0(X)\\
                         & =C^+(X).
                      \end{array}$$
Denote $${Res}_0={Res}_{(0,1)}\circ{Res}_{(1,2)}\circ\cdots\circ{Res}_{(n-1,n)},\;\;\;\;\;\;{Res}_0^*={Res}_{(n-1,n)}^*\circ\cdots\circ {Res}_{(1,2)}^*\circ {Res}_{(0,1)}^*.$$
It is easy to see that ${Res}_0^*$ is right adjoint of ${Res}_0$.

Now, following \cite{G} and \cite{GLS},  we construct another functor ${R}_0^*:{rep}(\Lambda^{(0)})\rightarrow {rep}(\tilde{\Lambda})$, and then show that ${R}_0^*$ is right adjoint to ${Res}_0$.

Let $X\in{rep}(\Lambda^{(0)})$. We first define $\tilde{X}\in {rep}(\tilde{\Lambda})$ by requiring that
$$\begin{array}{ll}
      {Res}^{(0,n)}(\tilde{X}) & =X. \\
        {Res}^{(n,n)}(\tilde{X})& =\oplus_{1\leq k<t\leq n}{Hom}_{A_t}(\tilde{B}_{t,n+k}\otimes e_{n+k}\Lambda^{(n)},X_t).
 \end{array}
$$
For $1\leq i<j\leq n$, it remains to define the structure map of $\tilde{X}$ as $A_j$-module morphisms:
$$\phi_{j,n+i}:\tilde{B}_{j,n+i}\otimes \tilde{X}_{n+i}\rightarrow \tilde{X}_j=X_j$$
which is given by the following composition:
$$\begin{array}{l}
      \tilde{B}_{j,n+i}\otimes_{A_i}(\oplus_{1\leq k<t\leq n}{Hom}_{A_t}(\tilde{B}_{t,n+k}\otimes e_{n+k}\Lambda^{(n)}e_{n+i},X_t)) \\
      \stackrel{{proj.}}\longrightarrow\tilde{B}_{j,n+i}\otimes_{A_i}{Hom}_{A_j}(\tilde{B}_{j,n+i}\otimes e_{n+i}\Lambda^{(n)}e_{n+i},X_j)\\
      =\tilde{B}_{j,n+i}\otimes_{A_i}{Hom}_{A_j}(\tilde{B}_{j,n+i},X_j)\\
      \stackrel{{eval.}}\longrightarrow X_j
 \end{array}
$$
where the first map is the projection on the direct summand indexed by $i,j$ and the second map is the evaluation $b\otimes \varphi=\varphi(b)$.

Secondly, we define a $\tilde{\Lambda}$-subrepresentation $R_0^*(X)$ of $\tilde{X}$ as follows. We set
$$(R_0^*(X))_i=\tilde{X}_i=X_i,~~~~(1\leq i\leq n),$$
and we define $R_0^*(X)_{n+i}$ as the subspace of $\tilde{X}_{n+i}$ generated by all of
$$(\mu_{k,t}^i)_{1\leq k<t\leq n}\in \oplus_{1\leq k<t\leq n}{Hom}_{A_t}(\tilde{B}_{t,n+k}\otimes e_{n+k}\Lambda^{(n)}e_{n+i},X_t)$$
such that, for all $1\leq t\leq n$ and $\lambda \in e_{n+t}\Lambda^{(n)} e_{n+i}$ the following relation holds:
\begin{equation}\label{subspace}
\Sigma_{1\leq k<t,\ell\in L_{kt}}\mu_{k,t}^i(\ell^*\otimes \ell\lambda)+\Sigma_{t<m\leq n,r\in R_{tm}}\phi_{tm}(r\otimes\mu_{t,m}^i(r^*\otimes \lambda))=0.
\end{equation}

  For $\mu^j=(\mu_{k,t}^j)_{1\leq k<t\leq n}\in (R_0^*(X))_{n+j},~~~1\leq j\leq n$, we deduce from the definitions that
$$\begin{array}{ll}
&\tilde{X}_{j,in}\circ \tilde{X}_{j,out}(\mu^j)\\
   =& \tilde{X}_{j,in}(\Sigma_{1\leq i<j,\ell\in L_{ij}}(\ell^*\otimes \phi_{n+i,n+j}(\ell\otimes \mu^j))
    + \Sigma_{j<k\leq n,r\in R_{ij}}(r\otimes \phi_{k,n+j}(r^*\otimes \mu^j)))\\
  =& \Sigma_{1\leq i<j,\ell\in L_{ij}}\phi_{j,n+i}(\ell^*\otimes \phi_{n+i,n+j}(\ell\otimes \mu^j))
    + \Sigma_{j<k\leq n,r\in R_{ij}}\phi_{j,k}(r\otimes \phi_{k,n+j}(r^*\otimes \mu^j))\\
    =&\Sigma_{1\leq i<j,\ell\in L_{ij}}\mu_{i,j}^j(\ell^*\otimes \ell\otimes e_{n+j})
    + \Sigma_{j<k\leq n,r\in R_{ij}}\phi_{j,k}(r\otimes \mu_{j,k}^j(r^*\otimes e_{n+j}))=0
  \end{array}.$$
So, $R_0^*(X)$ satisfies (\ref{outin}).

Thus, we have obtained a functor ${R}_0^*: {rep}(\Lambda^{(0)})\rightarrow {rep}(\tilde{\Lambda});~X\mapsto {R}_0^*(X).$  We have that ${R}_0^*$ is isomorphic to ${Res}_0^*$. This is according to the uniqueness of adjoint functors up to natural equivalence and the following lemma.
\begin{Lem}\label{adjoint}
${R}_0^*$ is right adjoint to ${Res}_0$.
\end{Lem}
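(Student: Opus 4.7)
The plan is to establish the adjunction by exhibiting a natural bijection
$$\Phi : {Hom}_{\tilde{\Lambda}}(Y, R_0^*X) \longrightarrow {Hom}_{\Lambda^{(0)}}({Res}_0\, Y, X)$$
sending $g = (g_j)_{j=1}^{2n}$ to its first $n$ components $(g_j)_{j=1}^{n}$, and to produce an explicit inverse $\Psi$. Since $(R_0^*X)_i = X_i$ for $1 \leq i \leq n$ and the structure maps of $R_0^*X$ indexed by $1 \leq i < j \leq n$ coincide verbatim with those of $X$, the map $\Phi$ is well-defined and natural in both variables; the entire substance of the proof lies in constructing $\Psi$.

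Given $f = (f_i)_{i=1}^n : {Res}_0\, Y \to X$, I will set $g_i := f_i$ for $1 \leq i \leq n$ and, for each $1 \leq i \leq n$, define $g_{n+i} : Y_{n+i} \to \tilde{X}_{n+i}$ by
$$g_{n+i}(y) = (\mu_{k,t}(y))_{1 \leq k < t \leq n}, \qquad \mu_{k,t}(y)(b \otimes \lambda) := f_t\bigl(\phi^Y_{t, n+k}(b \otimes \lambda \cdot y)\bigr)$$
for $b \in \tilde{B}_{t, n+k}$ and $\lambda \in e_{n+k}\Lambda^{(n)} e_{n+i}$. This formula is forced upon us: the map $\phi^{R_0^*X}_{t, n+k}$ is by construction the projection onto the $(k, t)$-summand followed by evaluation, so the commutativity $g_t \circ \phi^Y_{t, n+k} = \phi^{R_0^*X}_{t, n+k} \circ ({id} \otimes g_{n+k})$ combined with the action $\lambda \cdot y \in Y_{n+k}$ uniquely pins down each $\mu_{k,t}$. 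This immediately yields uniqueness of $\Psi(f) := g$, the identity $\Psi \circ \Phi = {id}$, and naturality of $\Psi$ in $X$ and $Y$.

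Two things remain. The easier is that $g$ is indeed a $\tilde{\Lambda}$-morphism: the identity $f_j \circ \phi^Y_{j, n+i} = \phi^{R_0^*X}_{j, n+i} \circ ({id} \otimes g_{n+i})$ for $1 \leq i < j \leq n$ reduces directly to $\mu_{i,j}(y)(b \otimes e_{n+i}) = f_j(\phi^Y_{j, n+i}(b \otimes y))$ and so holds by construction, while compatibility of the $g_{n+i}$ with one another under the $\Lambda^{(n)}$-action follows from associativity of that action on $Y$. The second and main obstacle is to show that $g_{n+i}(y)$ actually lies in the subspace $R_0^*(X)_{n+i} \subset \tilde{X}_{n+i}$ cut out by relation (\ref{subspace}). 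Substituting our formula for $\mu_{k,t}$ into the left-hand side of (\ref{subspace}) and pulling $f_t$ outside (using that $f$ intertwines the structure maps of $Y$ and $X$), the expression collapses via the dual-basis identities of Lemma \ref{dualbasis} applied to $L_{ij}, L_{ij}^*, R_{ij}, R_{ij}^*$ to $f_t\bigl(\tilde{Y}_{t, in}(\tilde{Y}_{t, out}(\lambda \cdot y))\bigr)$; this vanishes because $Y$ satisfies condition (\ref{outin}). The remaining identity $\Phi \circ \Psi = {id}$ is then automatic, completing the adjunction.
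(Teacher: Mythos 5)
The paper offers no proof of this lemma at all: it simply asserts that the argument is identical to that of \cite[Lemma 10.3]{GLS}. So your proposal is not competing with a written proof but supplying one, and the construction you give --- $\Phi$ as restriction to the first $n$ components, $\Psi$ as the forced extension $\mu_{k,t}(y)(b\otimes\lambda)=f_t\bigl(\phi^Y_{t,n+k}(b\otimes\lambda y)\bigr)$, with uniqueness coming from the fact that the structure maps of $R_0^*(X)$ are projection-followed-by-evaluation --- is exactly the GLS template and is the right skeleton. The identities $\Psi\Phi=\mathrm{id}$, $\Phi\Psi=\mathrm{id}$ and the naturality statements all come out as you say.

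There is, however, one step you should not pass over as quickly as you do. Your verification that $g_{n+i}(y)$ satisfies relation (\ref{subspace}) collapses, after pulling $f_t$ outside and applying the dual bases, to the identity $f_t\bigl(\tilde{Y}_{t,in}\circ\tilde{Y}_{t,out}(\lambda y)\bigr)=0$, and you justify this by saying ``$Y$ satisfies condition (\ref{outin})''. But the lemma as stated quantifies over all $Y\in{rep}(\tilde{\Lambda})$, and condition (\ref{outin}) is \emph{not} automatic for representations of $\tilde{\Lambda}$ as defined in this paper: $\tilde{A}_{t,n+t}$ is by construction a direct sum over paths, so the canonical element $\sum_{t<k\le n}\sum_{r\in R_{tk}}r\otimes r^*+\sum_{j<t}\sum_{\ell\in L_{jt}}\ell^*\otimes\ell$ is a nonzero element of $\tilde{A}_{t,n+t}$, and nothing in the algebra forces it to act by zero (the projective $\tilde{\Lambda}e_{n+t}$ already violates (\ref{outin})). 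Since your formula for $\Psi(f)$ is \emph{forced}, for a $Y$ violating (\ref{outin}) the extension genuinely fails to land in the subspace $R_0^*(X)_{n+i}$ and the adjunction bijection breaks. So what you have actually proved is the adjunction on the full subcategory of $Y$ satisfying (\ref{outin}); you should state that hypothesis explicitly. This is arguably an imprecision inherited from the paper (Lemma \ref{res} carries the hypothesis (\ref{outin}) but Lemma \ref{adjoint} does not), but it matters downstream: the paper's identification $R_0^*\cong{Res}_0^*$ via uniqueness of adjoints needs the adjunction on the category where ${Res}_0^*={Hom}_{\Lambda}(1_0^{(0)}\tilde{\Lambda},-)$ is the adjoint, so either the restricted version must be shown to suffice there or the discrepancy must be addressed. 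Everything else in your outline (the forcedness argument, compatibility with the $\Lambda^{(n)}$-action via associativity, the reduction of the crossing-map commutativity to the defining formula) is correct.
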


It is a generalization of \cite[Lemma 10.3]{GLS}. Their proofs are identical.

\begin{Lem}\label{dual}
Let $A_i$ be a Frobenius algebra, $U$ and $V$ be finite generated free $A_i$-modules, then we have an isomorphism $D{Hom}_{A_i}(U,V)\cong{Hom}_{A_i}(V,U)$, where $D:=Hom_k(-,k)$.
\end{Lem}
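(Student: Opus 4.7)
The strategy is to reduce to the single-copy case $U = V = A_i$ by additivity of all the functors involved, and then invoke the defining property of a Frobenius algebra.

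The first step is to use that $U$ and $V$ are finitely generated free to write $U \cong A_i^m$ and $V \cong A_i^n$ for some $m, n \geq 0$. Since both ${\rm Hom}_{A_i}(-,-)$ (in each variable separately) and the $k$-dual $D = {\rm Hom}_k(-,k)$ commute with finite direct sums, the problem reduces to the basic identifications ${\rm Hom}_{A_i}(A_i, A_i) \cong A_i$ and hence
$$D\,{\rm Hom}_{A_i}(U, V) \cong (DA_i)^{mn} \quad \text{and} \quad {\rm Hom}_{A_i}(V, U) \cong A_i^{nm}.$$

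The second step is to observe that the remaining isomorphism $DA_i \cong A_i$ needed to close the chain is precisely the defining property of a Frobenius algebra (as left, equivalently right, $A_i$-module, realized via the Frobenius bilinear form). Substituting this into the display above yields $D\,{\rm Hom}_{A_i}(U, V) \cong A_i^{mn} \cong {\rm Hom}_{A_i}(V, U)$, which is the desired isomorphism.

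I do not anticipate a serious obstacle: the argument is essentially bookkeeping combined with the definition of ``Frobenius''. The only point requiring a little care is making the final isomorphism natural in $U$ and $V$, so that it can be invoked in later functorial arguments (for instance in computing $\tau$ via the projective resolution of Corollary \ref{projresolution}); but this naturality is automatic since every intermediate identification is canonical once a Frobenius form on $A_i$ has been fixed.
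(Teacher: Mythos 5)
Your proposal is correct and follows essentially the same route as the paper: decompose $U$ and $V$ into finite direct sums of copies of $A_i$, use additivity of ${\rm Hom}$ and $D$ to reduce to $D\,{\rm Hom}_{A_i}(A_i,A_i)\cong{\rm Hom}_{A_i}(A_i,A_i)$, and close with the Frobenius isomorphism $DA_i\cong A_i$. Your added remark on naturality of the resulting isomorphism is a sensible supplement that the paper leaves implicit.
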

\begin{proof}
Since $A_i$ is a Frobenius algebra, $DA_i\cong A_i$. Let $U=\oplus_{j=1}^nA_i$ and $V=\oplus_{t=1}^mA_i$, then $D{Hom}_{A_i}(U,V)\cong \oplus_{j=1}^n\oplus_{t=1}^mD{Hom}_{A_i}(A_i,A_i)\cong \oplus_{j=1}^n\oplus_{t=1}^m{Hom}_{A_i}(A_i,A_i)\cong{Hom}_{A_i}(V,U).$
\end{proof}

\begin{Prop}\label{restiction}
For a Frobenius-type triangular matrix algebra $\Lambda$ and $X\in {rep}_{l.f.}(\Lambda)$, there is an isomorphism $$\tau(TM)\cong{Res}^{(n,n)}\circ{R}_0^*(X),$$
where $\Lambda$ is identified  with $\Lambda^{(0)}$ and $\Lambda^{(n)}$ by their definitions.
\end{Prop}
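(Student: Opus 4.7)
The plan is to compute $\tau(TX)$ from the length-one projective resolution of $X$ given by Corollary \ref{projresolution}. Since $TX$ and $X$ have identical underlying $A_i$-modules and only the sign of every $\phi_{ij}$ is flipped, the same formula yields a projective presentation $P^1 \to P^0 \to TX \to 0$, and hence $\tau(TX) \cong D\,\mathrm{coker}\!\bigl(\HH_\Lambda(P^0,\Lambda) \to \HH_\Lambda(P^1,\Lambda)\bigr)$; local freeness of $X$ (via Proposition \ref{closed} and Corollary \ref{projdim}) ensures that this presentation is minimal in the relevant sense, so no projective summand intervenes.

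Next I would rewrite the two Hom-groups using the standard adjunctions $\HH_\Lambda(\Lambda e_k \otimes_{A_k} X_k, \Lambda) \cong \HH_{A_k}(X_k, e_k\Lambda)$ and $\HH_\Lambda(\Lambda e_i \otimes_{A_i} B_{ij} \otimes_{A_j} X_j, \Lambda) \cong \HH_{A_j}(X_j, \HH_{A_i}(B_{ij}, e_i\Lambda))$. Applying $D$ and then invoking Lemma \ref{dual} vertex by vertex (this is where the Frobenius hypothesis on each $A_i$, together with the local freeness of each $X_k$, is essential) lets me re-express each $D\HH_{A_k}(X_k, e_k\Lambda)$ as data valued in $X_k$ itself. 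The resulting description of the $i$-th component of $\tau(TX)$ is a direct sum indexed by pairs $1 \leq k < t \leq n$ of spaces of exactly the same shape as the ambient space of $(R_0^*(X))_{n+i}$ that appears in the construction preceding (\ref{subspace}).

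Finally I would identify the kernel condition cutting out this $i$-th component with the relation (\ref{subspace}). Writing the map dual to $d\otimes X$ in terms of the dual bases $L_{ij}, L_{ij}^*, R_{ij}, R_{ij}^*$ furnished by Lemma \ref{dualbasis}, one checks that a tuple $(\mu_{k,t}^i)$ lies in the kernel precisely when (\ref{subspace}) holds: the minus sign in $d \otimes X$ combined with the sign flip $\psi_{ij} = -\phi_{ij}$ introduced by $T$ is exactly what produces the two $+$-signs appearing there. Compatibility with the $\tilde{B}_{j,n+i}$-module structure maps $\phi_{j,n+i}$ comes from a direct computation using the adjunction of Proposition \ref{isomorphism}, and naturality in $X$ follows from the naturality of every construction used. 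The main obstacle I expect is the sign and dual-basis bookkeeping: one has to track carefully how the transposed differential, once expanded via dual bases, yields both summands of (\ref{subspace}) with the correct signs, and to see that this is exactly the role of the $T$-twist. Beyond this, the proof parallels the argument in \cite{GLS} for $A(C,D,\Omega)$, with Lemma \ref{dual} and the dual-basis apparatus replacing the more elementary calculations available in that setting.
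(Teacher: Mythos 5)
Your outline reproduces, in expanded form, exactly the argument the paper itself invokes: it defers to the proof of \cite[Proposition 10.4]{GLS} (compute $\tau(TX)=D\,\mathrm{Tr}(TX)$ from the resolution of Corollary \ref{projresolution}, dualize, and match the resulting kernel conditions with (\ref{subspace}) via the dual bases of Lemma \ref{dualbasis}), with the single new ingredient being $D\mathrm{Hom}_{A_i}(e_i\Lambda,X_i)\cong\mathrm{Hom}_{A_i}(X_i,e_i\Lambda)$ from Lemma \ref{dual} --- which you correctly isolate as the point where the Frobenius hypothesis and local freeness enter. One minor imprecision: the resolution $P_\bullet\otimes_\Lambda X$ is in general \emph{not} minimal; what actually guarantees that no projective summand intervenes is that its differential is injective, so the cokernel of its $\mathrm{Hom}_\Lambda(-,\Lambda)$-dual is still exactly $\mathrm{Tr}(TX)$.
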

\begin{proof}
The proof is similar to \cite[Proposition 10.4]{GLS}. The only illustration we need to add is the fact that for a locally free $\Lambda$-module $X$, we have $DHom_{A_i}(e_i\Lambda, X_i)\cong Hom_{A_i}(X_i, e_i\Lambda)$. This follows from Lemma \ref{dual}.
\end{proof}

\textbf{Proof of Theorem \ref{reflection functor}:}
By Proposition \ref{restiction}, if $X\in {rep}_{l.f.}(\Lambda)$ we have
$$TC^+(X)\cong{Res}^{(n,n)}\circ{Res}_0^*(TX)\cong{Res}^{(n,n)}\circ{R}_0^*(TX)\cong\tau(T^2X)\cong\tau(X). $$

Let $X,Y\in {rep}_{l.f.}(\Lambda)$, then we have
$$\begin{array}{ll}
    {Hom}_\Lambda(\tau^-(X),Y)  &\cong{Hom}_\Lambda(X,\tau(Y)) \\
     & \cong{Hom}_\Lambda(X,C^+(TY)) \\
     & \cong{Hom}_\Lambda(C^-(TX),Y).
  \end{array}$$
The first isomorphism is obtained by Corollary \ref{projdim} and the third isomorphism follows from Proposition \ref{adj}.

There exists a functorial isomorphism of right $\Lambda$-modules $DX\cong Hom_\Lambda(X,D\Lambda)$ for all $\Lambda$-modules $X$. Since $D\Lambda\in rep_{l.f.}(\Lambda)$, taking $Y=D\Lambda$ we get $\tau^-(X)\cong C^-(TX)$.
\qed\\

Recall that the category of Gorenstein-projective modules of $\Lambda$ is
$$\mathcal{GP}(\Lambda)=\{X\in {rep}(\Lambda)|{Ext}_\Lambda^1(X,\Lambda)\}=0.$$
As an immediate consequence of Theorem \ref{reflection functor} and the definition of $C^+(-)$ we get the following result.
\begin{Cor}
For a Frobenius-type triangular matrix algebra $\Lambda$ and $X\in{rep}_{l.f.}(\Lambda)$, the following are equivalent:

 $(i)~~X\in \mathcal{GP}(\Lambda);$

$(ii)~C^+(X)=0;$

$(iii)X_{i,in}$ is injective for all $1\leq i\leq n$.
\end{Cor}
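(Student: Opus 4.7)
The plan is to prove the two equivalences $(i) \Leftrightarrow (ii)$ and $(ii) \Leftrightarrow (iii)$ separately, using different machinery for each.

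For $(i) \Leftrightarrow (ii)$, I would combine Corollary \ref{projdim} with Theorem \ref{reflection functor}. Since $X \in \text{rep}_{l.f.}(\Lambda)$, we have $\text{proj.dim}(X) \leq 1$, so the functorial isomorphism $\text{Ext}^1_\Lambda(X, Y) \cong D\,\text{Hom}_\Lambda(Y, \tau X)$ recalled just before Corollary \ref{equal} applies. Taking $Y = \Lambda$ yields $\text{Ext}^1_\Lambda(X, \Lambda) \cong D(\tau X)$, so $X \in \mathcal{GP}(\Lambda)$ iff $\tau X = 0$. By Theorem \ref{reflection functor} we have $\tau X \cong TC^+(X)$, and since $T$ is an automorphism functor this module vanishes iff $C^+(X) = 0$.

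For $(ii) \Leftrightarrow (iii)$, the plan is to unwind the iterated reflection functors. Set $Y^{(0)} := X$ and $Y^{(k)} := F_k^+(Y^{(k-1)})$ for $1 \leq k \leq n$, with kernels $K_k := \text{Ker}(Y^{(k-1)}_{1,in})$. Tracking the cyclic relabeling of vertices at each step shows that $C^+(X) = Y^{(n)}$ has $j$-th component exactly $K_j$, so $C^+(X) = 0$ iff every $K_j$ vanishes. The central claim, to be proved by induction on $j$, is: if $X_{1,in}, \ldots, X_{j-1,in}$ are all injective, then $Y^{(j-1)}$ has the form $(X_j, X_{j+1}, \ldots, X_n, 0, \ldots, 0)^t$ with the last $j-1$ slots zero, and the incoming map $Y^{(j-1)}_{1,in}$ coincides with $X_{j,in}$; hence $K_j = \text{Ker}(X_{j,in})$. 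Both implications of $(ii) \Leftrightarrow (iii)$ then follow at once.

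The main obstacle I anticipate is the inductive step of the last claim. At each reflection $F_k^+$ the algebra $S_k\cdots S_1(\Lambda)$ acquires new "dual" bimodules $B_{\bullet,1}$ with associated structure maps $\overline{\psi_{\bullet,1}}$, so a priori $Y^{(j-1)}_{1,in}$ has extra summands beyond the original $X_{j,in}$. The point is that under the induction hypothesis these extra summands all have domain zero, because the trailing slots of $Y^{(j-1)}$ are precisely the previously computed kernels $K_1, \ldots, K_{j-1}$, which vanish by assumption. Once this bookkeeping with the cyclic reindexing and the newly introduced dual bimodules is carried out, no further input beyond the reflection-functor construction is needed.
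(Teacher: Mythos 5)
Your proposal is correct and follows exactly the route the paper intends: the paper gives no written proof, stating only that the corollary is ``an immediate consequence of Theorem \ref{reflection functor} and the definition of $C^+(-)$'', which unpacks precisely into your two steps --- $(i)\Leftrightarrow(ii)$ via $\mathrm{Ext}^1_\Lambda(X,\Lambda)\cong D\,\mathrm{Hom}_\Lambda(\Lambda,\tau X)\cong D(\tau X)$ together with $\tau X\cong TC^+(X)$, and $(ii)\Leftrightarrow(iii)$ by unwinding $C^+=F_n^+\cdots F_1^+$. Your observation that the extra summands $B_{j i}\otimes K_i$ appearing after earlier reflections vanish under the inductive hypothesis is exactly the bookkeeping needed to make the second equivalence rigorous.
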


This result has been proved in a more general case in \cite{LY}.

\section{Root systems in case of Dynkin type}

For a Frobenius-type triangular matrix algebra $\Lambda$, let $C=(c_{ij})\in M_n(\mathbb{Z})$, where $$c_{ij}=\left\{
  \begin{array}{lll}
    2,  & \hbox{if} ~i=j\\
    -\hbox{rank}_{A_i}(B_{ij}) &\hbox{if} ~ i<j\\
    -\hbox{rank}_{A_i}(B_{ji})  &\hbox{if} ~ i>j.
  \end{array}
\right.$$ Denote $c_i={dim}_k(A_i),$ then it is easy to get that $c_i c_{ij}=c_j c_{ji}=-dim_k(B_{ij})$, which means that $C$ is a symmetrizable Cartan matrix.

 Define a quadratic form $q_C:\mathbb{Z}^n\rightarrow \mathbb{Z}$ of $C$ satisfying for $x=(x_1,\cdots,x_n)^t\in \mathbb Z^n$,
\begin{equation}\label{matrixquadratic}
q_C(x):=\sum_{i=1}^nc_ix_i^2-\sum_{i<j}c_i|c_{ij}|x_ix_j.
\end{equation}

 The Cartan matrix $C$ is said to be of {\em Dynkin type} (resp. {\em Euclidean type}) if $q_C$ is positive define (resp. positive semidefinite).

We define the valued quiver $\Gamma(\Lambda)$ via the Cartan matrix $C$ whose vertices are $1,\ldots,n$ and whose arrow $i\leftarrow j$ from $j$ to $i$ exists for each pair $(i,j)$ with $i<j$ if $c_{ij}<0$, with  valuation $(-c_{ji},-c_{ij})$ on the arrow $i\leftarrow j$.
 It is well-known, , see \cite[Theorem 4.8]{Kac}, that
 \begin{Fac}\label{fact}
 $C$ is of Dynkin type if and only if $\Gamma(\Lambda)$  is a disjoint union of quivers whose underlying valued graph is of Dynkin type.
 \end{Fac}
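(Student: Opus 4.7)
The plan is to reduce the statement to Kac's classification of connected symmetrizable generalized Cartan matrices with positive-definite symmetrization. First I would make the link between the quadratic form $q_C$ and the symmetrized matrix $DC$ explicit: since $C$ is symmetrizable with symmetrizer $D=\mathrm{diag}(c_1,\ldots,c_n)$, the matrix $DC$ is symmetric with $(DC)_{ii}=2c_i$ and $(DC)_{ij}=-c_i|c_{ij}|$ for $i\neq j$, so a direct expansion of (\ref{matrixquadratic}) gives $q_C(x)=\tfrac{1}{2}x^T(DC)x$. Hence positive-definiteness of $q_C$ is equivalent to positive-definiteness of the symmetric matrix $DC$, which is the usual criterion used in the literature, independent of the particular choice of symmetrizer $D$ (any two symmetrizers differ by a positive diagonal conjugation on each connected block, which preserves positive-definiteness).

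Next I would reduce to the connected case. By the very definition of $\Gamma(\Lambda)$, two vertices $i$ and $j$ lie in the same connected component of $\Gamma(\Lambda)$ if and only if they are linked by a chain of indices $i=i_0,i_1,\ldots,i_s=j$ with $c_{i_ri_{r+1}}<0$ for every $r$. Therefore, up to simultaneous row/column permutation, $C$ is a block-diagonal matrix $C_1\oplus\cdots\oplus C_r$ whose blocks correspond to the connected components of $\Gamma(\Lambda)$, and the symmetrizer $D$ decomposes compatibly. This gives an orthogonal decomposition $q_C = q_{C_1}\perp\cdots\perp q_{C_r}$, so $q_C$ is positive definite if and only if each $q_{C_s}$ is. Thus the problem is reduced to showing that a connected symmetrizable Cartan matrix has positive-definite associated quadratic form if and only if its valued graph is a Dynkin diagram.

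Finally I would invoke \cite[Theorem 4.8]{Kac}, which classifies the connected generalized Cartan matrices of finite, affine and indefinite type via the sign of the principal minors of $DC$, and identifies the finite type case (equivalently, $DC$ positive definite) with the classical Dynkin diagrams $A_n$, $B_n$, $C_n$, $D_n$, $E_{6,7,8}$, $F_4$, $G_2$. The only remaining step is a conventions check: our valued quiver $\Gamma(\Lambda)$ places an edge between $i$ and $j$ exactly when $c_{ij}<0$, labeled by the pair $(-c_{ji},-c_{ij})$, which is precisely Kac's valued graph attached to a symmetrizable Cartan matrix. This matching identifies a connected component of $\Gamma(\Lambda)$ as Dynkin in our sense with the classification list in \cite{Kac}, completing the equivalence.

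The main obstacle is essentially a bookkeeping one rather than a mathematical one: making sure the symmetrizer $D=\mathrm{diag}(\dim_k A_i)$ coming from the algebra structure gives the \emph{same} notion of positive-definiteness as the minimal symmetrizer used in Kac's reference, and that the valuation convention on $\Gamma(\Lambda)$ matches the Dynkin diagram conventions. Both are routine to verify, so beyond these translations the statement is an immediate consequence of the classical classification and no additional argument is needed.
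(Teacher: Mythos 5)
Your proposal is correct and follows the same route as the paper, which states this Fact without proof as a well-known consequence of the classification in \cite[Theorem 4.8]{Kac}; your identification $q_C(x)=\tfrac{1}{2}x^T(DC)x$, the block decomposition along connected components of $\Gamma(\Lambda)$, and the conventions check on the valuation $(-c_{ji},-c_{ij})$ are exactly the routine translations needed to make that citation apply. Nothing further is required.
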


 The standard basis vector of $\mathds{Z}^n$, denoted as $\alpha_1,\cdots, \alpha_n$,  are the positive simple roots of the Kac-Moody algebra $\mathfrak{g}(C)$ associated with $C$, that is, of the quadratic form $q_C(x)$. For $1\leq i,j\leq n$, define the reflections $s_i$ satisfying that $s_i(\alpha_j):=\alpha_j-c_{ij}\alpha_i.$

The \emph{Weyl group} $W(C)$ of $\mathfrak{g}(C)$ is the subgroup of ${Aut}(\mathds{Z}^n)$ generated by $s_1,\cdots,s_n$. It is well-known that $W(C)$ is finite if and only if $C$ is of Dynkin type.

Let $\Delta(C)$ be the sets of roots of $C$ and $\Delta_{re}(C):=\cup_{i=1}^nW(\alpha_i)$ be the set of \emph{real roots} of $C$. Let $\Delta^+(C):=\Delta(C)\cap \mathbb{N}^n$ and $\Delta_{re}^+(C):=\Delta_{re}(C)\cap \mathbb{N}^n$. The following are equivalent:

$(i)~~C$ is of Dynkin type;

$(ii)~\Delta(C)$ is finite;

$(iii)\Delta_{re}(C)=\Delta(C).$

Let $$\beta_k=\left\{
  \begin{array}{ll}
    \alpha_1,  & \hbox{if} ~k=1,\\
    s_1s_2\cdots s_{k-1}(\alpha_k) &\hbox{if} ~2\leq k\leq n
  \end{array}
\right.$$ and $$\gamma_k=\left\{
  \begin{array}{ll}
    s_ns_{n-1}\cdots s_{k+1}(\alpha_k) &\hbox{if} ~1\leq k\leq n-1,\\
     \alpha_n,  & \hbox{if} ~k=n.
  \end{array}
\right.$$

Let $c^+=s_ns_{n-1}\cdots s_1:\mathbb{Z}^n\rightarrow \mathbb{Z}^n$ and $c^-=s_1s_2\cdots s_n:\mathbb{Z}^n\rightarrow \mathbb{Z}^n$ be the \emph{Coxeter transformations}. For $k\in \mathbb{Z}$,  set $c^k:=\left\{
  \begin{array}{lll}
    (c^+)^k  & \hbox{if} ~k>0,\\
     (c^-)^{-k}  & \hbox{if} ~k<0,\\
    id &\hbox{if} ~k=0.
  \end{array}
\right.$
It follows that $c^+(\beta_k)=-\gamma_k$.

The following three lemmas are well-known, for example see \cite[Chapter VII]{IAssem}.
\begin{Lem}\label{rootinfinite}
Suppose $C$ is not of Dynkin type. Then the element $c^{-r}(\beta_i)$ and $c^s(\gamma_j)$ with $r,s\geq 0$ and $1\leq i,j \leq n$ are pairwise different elements in $\Delta_{re}^+(C)$.
\end{Lem}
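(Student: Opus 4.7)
The plan is to follow the classical argument for preprojective and preinjective roots of a valued quiver (cf.\ \cite[Chapter VII]{IAssem}), applied to $\Gamma(\Lambda)$, the valued quiver associated with the symmetrizable Cartan matrix $C$. First, I would verify that each $\beta_i$ and $\gamma_j$ is a positive real root. Since they are Weyl group images of simple roots, they are automatically real; positivity follows from the fact that $1,2,\ldots,n$ is an admissible ordering of vertices of $\Gamma(\Lambda)$ (with $1$ a sink and $n$ a source), so that each partial product $s_1\cdots s_{k-1}$ applied to $\alpha_k$ expands inductively as a non-negative combination of simple roots, giving $\beta_k\in\Delta_{re}^+(C)$; the same reasoning with the reversed ordering yields $\gamma_k\in\Delta_{re}^+(C)$.

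Next, I would establish that all iterates $c^s(\gamma_j)$ for $s\geq 0$ and $c^{-r}(\beta_i)$ for $r\geq 0$ remain positive. The crucial input is the hypothesis that $C$ is not of Dynkin type: by the equivalence $(i)\Leftrightarrow(ii)$ recorded immediately before the lemma, $\Delta(C)$ is infinite. I would proceed by induction on $s$, using that a single reflection $s_k$ sends positive roots to positive roots except for $\alpha_k$ itself; if at some step an intermediate partial reflection produced a simple root negated, one could reverse the Coxeter element to see that the orbit closes up into a periodic structure, which forces finiteness of $\Delta(C)$ and hence Dynkin type, contradicting the standing hypothesis. The argument for $c^{-r}(\beta_i)$ is dual via $c^-$.

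For the pairwise distinctness I would split into three cases. Within the preprojective family, an equality $c^s(\gamma_j)=c^{s'}(\gamma_{j'})$ with $s<s'$ would force $\gamma_j=c^{s'-s}(\gamma_{j'})$, and applying $c^+$ once more while using the identity $c^+(\beta_k)=-\gamma_k$ recorded before the lemma propagates a sign into the orbit $\{c^t(\gamma_{j'})\}_{t\geq 0}$, contradicting the positivity just established. The preinjective case is dual. For the cross case $c^{-r}(\beta_i)=c^s(\gamma_j)$, I would rewrite as $c^{r+s}(\gamma_j)=\beta_i$ and apply $c^+$ once more: the left-hand side stays positive by the previous step, whereas the right-hand side becomes $c^+(\beta_i)=-\gamma_i$, again a contradiction.

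The main obstacle is the persistent-positivity step, since that is exactly where the non-Dynkin hypothesis must enter. Rather than verifying it by raw Weyl-group combinatorics, I expect to invoke the standard Dlab--Ringel identification of the dimension vectors of preprojective (resp.\ preinjective) indecomposable modules over a hereditary species with the orbits $\{c^s(\gamma_j)\}_{s\geq 0}$ (resp.\ $\{c^{-r}(\beta_i)\}_{r\geq 0}\}$), together with the fact that for non-Dynkin type the AR-quiver of the species has infinite preprojective and preinjective components which are disjoint. This delivers positivity, distinctness within each family, and cross-distinctness simultaneously.
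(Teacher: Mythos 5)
You should first be aware that the paper does not actually prove Lemma \ref{rootinfinite}: it declares Lemmas \ref{rootinfinite}--\ref{positvenegative} to be well known and points to \cite[Chapter VII]{IAssem}. So the real question is whether your sketch is a complete proof, and the skeleton you chose is indeed the standard one: positivity of the $\beta_i,\gamma_j$, persistent positivity of the orbits $c^{-r}(\beta_i)$ and $c^{s}(\gamma_j)$, and then distinctness by applying a Coxeter transformation once more and reading off a sign. Two local corrections to the last step. First, in the case $c^{s}(\gamma_j)=c^{s'}(\gamma_{j'})$ with $s<s'$, reducing to $\gamma_j=c^{s'-s}(\gamma_{j'})$ and then applying $c^{+}$ gives $c(\gamma_j)=c^{s'-s+1}(\gamma_{j'})$, where both sides are positive by persistent positivity --- no contradiction. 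What you need is the dual identity $c^{-}(\gamma_j)=-\beta_j$ (easily derived from $c^{-}=s_1\cdots s_n$ and $\gamma_j=s_n\cdots s_{j+1}(\alpha_j)$, but not stated in the paper), which turns the left side negative while the right side $c^{s'-s-1}(\gamma_{j'})$ stays positive. Second, the case $s=s'$, $j\neq j'$ needs the (easy) remark that the $\gamma_j$ themselves are pairwise distinct, visible from their triangular shape; similarly for the $\beta_i$.

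The genuine gap is exactly where you located it: persistent positivity. Your combinatorial argument for it does not work as stated. If $s\geq 1$ is minimal with $c^{s}(\gamma_j)\not>0$, the first-descent argument only yields a single coincidence $c^{s-1}(\gamma_j)=\beta_k$ for one pair $(j,k)$; the assertion that ``the orbit closes up into a periodic structure, which forces finiteness of $\Delta(C)$'' is precisely the nontrivial implication to be proved, and nothing in the sketch derives finiteness of the whole root system (or even termination of the other $2n-1$ orbits) from that one coincidence. Your fallback --- importing the Dlab--Ringel identification of these orbits with rank vectors of preprojective and preinjective indecomposables over a hereditary species --- does close the gap and is, in substance, the proof behind the citation the paper gives; but to invoke it honestly you must (a) exhibit a hereditary species (or an algebra such as $A(C,D,\Omega)$ of \cite{GLS}) realizing the given symmetrizable Cartan matrix $C$, which is an extra existence input, and (b) not use the paper's own algebra $\Lambda$ for this purpose, since Theorem \ref{mainone}(a) derives the infinitude of indecomposable $\tau$-locally free $\Lambda$-modules \emph{from} Lemma \ref{rootinfinite}, and arguing in the other direction would be circular. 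With those provisos the proposal is acceptable, but as written the key step is cited rather than proved --- which puts it on the same footing as the paper itself rather than ahead of it.
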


Let $C$ be of Dynkin type. Let $p_i\geq 1$ be minimal with $c^{-p_i}(\beta_i)\notin \mathbb{N}^n$ for $1\leq i\leq n$, and let $q_j\geq 1$ be minimal with $c^{q_j}(\gamma_j)\notin \mathbb{N}^n$ for $1\leq j\leq n$. The elements $c^{-r}(\beta_i)$ with $1\leq i\leq n$ and $0\leq r\leq p_i-1$ are pairwise different, and the elements $c^s(\gamma_j)$ with $1\leq j\leq n$ and $0\leq s\leq q_j-1$ are pairwise different.

\begin{Lem}\label{rootfinite}
Assume that $C$ is of Dynkin type, then $\Delta^+(C)=\{c^{-r}(\beta_i)|1\leq i\leq n,~0\leq r\leq p_i-1\}=\{c^s(\gamma_j)|1\leq j\leq n,~0\leq s\leq q_j-1\}.$
\end{Lem}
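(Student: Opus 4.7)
The plan is to derive this from the classical theory of Coxeter elements in finite Weyl groups, in the same spirit as the cited reference \cite{IAssem}. First, since $C$ is of Dynkin type the Weyl group $W(C)$ is finite, so every $c^+$-orbit on $\mathbb{Z}^n$ is finite and the integers $p_i, q_j$ are well-defined. The central identity driving the argument is $c^+(\beta_k) = -\gamma_k$, proved by writing $\beta_k = s_1 \cdots s_{k-1}(\alpha_k)$ and computing $s_n \cdots s_1 \cdot s_1 \cdots s_{k-1}(\alpha_k) = s_n \cdots s_k(\alpha_k) = -s_n \cdots s_{k+1}(\alpha_k) = -\gamma_k$. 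This identity is the only nontrivial computation; everything else is bookkeeping.

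Next I would establish distinctness within each orbit and between orbits. Within the $i$-th orbit, the elements $c^{-r}(\beta_i)$ for $0 \leq r \leq p_i-1$ lie in $\Delta^+(C)$ by the minimality of $p_i$; since $c^{-1}$ is an invertible linear map on $\mathbb{Z}^n$ and the $p_i$ images are genuine roots (real roots are $W$-invariant), they are automatically pairwise distinct. For cross-orbit distinctness, suppose $c^{-r}(\beta_i) = c^{-s}(\beta_j)$ with $r \geq s$; applying $c^s$ reduces to $c^{s-r}(\beta_i) = \beta_j$, and one then examines the $\alpha_k$-supports of $\beta_j = s_1\cdots s_{j-1}(\alpha_j)$ (whose coefficient of $\alpha_j$ is $1$ and whose coefficients of $\alpha_k$ vanish for $k>j$) against the same structure on the left side to force $r=s$ and $i=j$.

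The main obstacle, and the substantive part of the proof, is exhaustion: showing every $\alpha \in \Delta^+(C)$ equals some $c^{-r}(\beta_i)$. Given such $\alpha$, because the orbit is finite and $C$ is Dynkin, there exists a largest $r \geq 0$ with $(c^+)^r(\alpha) \in \mathbb{N}^n$; set $\beta := (c^+)^r(\alpha)$. One then argues that $\beta$ must coincide with some $\beta_i$. The argument factors $c^+ = s_n \cdots s_1$ and tracks when the partial product $s_k \cdots s_1(\beta)$ first leaves $\mathbb{N}^n$: if this happens at step $i$, then $s_{i-1}\cdots s_1(\beta) = \alpha_i$, equivalently $\beta = s_1 \cdots s_{i-1}(\alpha_i) = \beta_i$. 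Hence $\alpha = c^{-r}(\beta_i)$. The description in terms of the $\gamma_j$ follows by the mirror-image argument applied to $c^-$ in place of $c^+$, using the relation $c^+(\beta_k)=-\gamma_k$ (equivalently $c^-(\gamma_k)=-\beta_k$) to convert between the two parametrizations. The hardest step is the just-mentioned first-exit analysis, which is a standard but delicate combinatorial fact about the action of a Coxeter element on a finite positive root system; we would invoke it from \cite{IAssem} rather than reprove it in detail.
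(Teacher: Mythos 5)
The paper does not actually prove this lemma: it is one of three statements introduced with ``The following three lemmas are well-known, for example see \cite[Chapter VII]{IAssem}'', so the only comparison available is with the standard textbook argument you are reconstructing. Your sketch is indeed that standard argument (the identity $c^+(\beta_k)=-\gamma_k$, the first-exit analysis through the partial products of $c^+=s_n\cdots s_1$, and the mirror argument for the $\gamma_j$), and its overall architecture is sound; note also that the pairwise distinctness you discuss is stated by the paper separately, just before the lemma, so strictly speaking only the two set equalities are at issue.

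Two points in your write-up need repair. First, in the exhaustion step the quantity ``the largest $r\geq 0$ with $(c^+)^r(\alpha)\in\mathbb{N}^n$'' does not exist: in the Dynkin case $c^+$ has finite order $h$, so $(c^+)^{mh}(\alpha)=\alpha\in\mathbb{N}^n$ for every $m$, and the orbit re-enters $\mathbb{N}^n$ infinitely often. What you want is the \emph{smallest} $r\geq 0$ with $(c^+)^{r+1}(\alpha)\notin\mathbb{N}^n$ (which exists because the orbit sum is $c^+$-invariant, hence zero, hence the orbit cannot stay positive). This choice is also what makes the bound $r\leq p_i-1$ come out, since then $c^{-m}(\beta_i)=(c^+)^{r-m}(\alpha)\in\mathbb{N}^n$ for all $0\leq m\leq r$; with a ``largest $r$'' that chain of positivity would not be available. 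Your subsequent sentence about tracking when the partial product first leaves $\mathbb{N}^n$ shows you intend exactly this, so it is a slip of phrasing rather than of idea, but as written the step's premise is false. Second, distinctness within an orbit is not ``automatic'' from invertibility of $c^{-1}$: an invertible map can have periodic orbits. One needs either that the Coxeter transformation of a Dynkin-type Cartan matrix has no nonzero fixed vector (positive definiteness of $q_C$ kills the radical), or the observation that a periodic all-positive orbit would contradict the vanishing of the orbit sum and hence the existence of $p_i$. With these two corrections your proof is a faithful rendering of \cite[Chapter VII]{IAssem}, which is all the paper itself appeals to.
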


\begin{Lem}\label{positvenegative}
Assume that $C$ is of Dynkin type. For every positive vector $\underline{x}$, there exist $s\geq 0$ such that $c^s \underline{x}>0$ but $c^{s+1} \underline{x}\ngtr 0$, and $t\geq0$ such that $c^{-t} \underline{x}>0$ but $c^{-t-1} \underline{x}\ngtr 0$.
\end{Lem}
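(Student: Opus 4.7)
The plan is to argue both assertions in parallel and focus on the existence of $s \geq 0$; the existence of the required $t \geq 0$ follows by the symmetric argument in which $c^+$ is replaced by $c^-$ and one averages $c^{-t}\underline{x}$ instead of $c^t\underline{x}$, so I describe only the $c^+$-side.

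First I would proceed by contradiction: suppose $c^s \underline{x} > 0$ for every integer $s \geq 0$. Since $C$ is of Dynkin type, Fact \ref{fact} together with the remarks preceding it ensure that the Weyl group $W(C)$ is finite; consequently the Coxeter transformation $c^+ = s_n s_{n-1}\cdots s_1$ has finite order in $\mathrm{GL}(\mathbb{Z}^n)$, say of order $h$. Then form the average
\begin{equation*}
y \;:=\; \sum_{s=0}^{h-1} c^s \underline{x} \;\in\; \mathbb{Z}^n.
\end{equation*}
By assumption each summand is a strictly positive integer vector, so $y > 0$ and in particular $y \neq 0$. A trivial re-indexing, using $c^h = \mathrm{id}$, yields $c^+ y = y$.

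To close the argument I would invoke the fact that, in the Dynkin case, $c^+$ has no nonzero fixed vector in $\mathbb{Q}^n$ (equivalently, $\det(\mathrm{id} - c^+) \neq 0$). Granted this, $c^+ y = y$ forces $y = 0$, contradicting $y > 0$; hence the sequence $c^s\underline{x}$ cannot be positive for all $s \geq 0$, and the smallest $s$ with $c^{s+1}\underline{x} \ngtr 0$ is the required one.

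The main obstacle is precisely this eigenvalue statement, which is the only nontrivial input. It is a classical piece of Coxeter theory: $c^+$ preserves the positive-definite form $q_C$, and its eigenvalues on $\mathbb{C}^n$ are $e^{2\pi i m_j/h}$ with $m_j \in \{1,\dots,h-1\}$ the exponents of $W(C)$, so $1$ is not among them. I would simply cite this from \cite[Ch.~VII]{IAssem}, as the present paper already references that source for the surrounding Lemmas \ref{rootinfinite} and \ref{rootfinite}. The analogous averaging $\sum_{t=0}^{h-1} c^{-t}\underline{x}$ (again $c^+$-fixed, and nonzero by the same positivity assumption) then supplies the second assertion.
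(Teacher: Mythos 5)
Your argument is correct. Note that the paper offers no proof of this lemma at all: it is listed among "the following three lemmas are well-known" and simply referred to \cite[Chapter VII]{IAssem}, so there is no in-paper argument to compare against. What you have written is essentially the standard proof that the cited source delegates to: finiteness of $W(C)$ in the Dynkin case gives $c^+$ finite order $h$, the averaging $y=\sum_{s=0}^{h-1}c^s\underline{x}$ of an (assumed) everywhere-positive orbit produces a nonzero $c^+$-fixed vector, and the absence of the eigenvalue $1$ for the Coxeter transformation yields the contradiction; the $t$-direction is symmetric. Two minor remarks. First, "strictly positive" is a slight overstatement — positivity here means nonnegative and nonzero — but this does not affect the conclusion $y\neq 0$. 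Second, while the exponents description of the eigenvalues of $c^+$ certainly suffices, you can get the fixed-point-freeness more cheaply: if $s_n\cdots s_1y=y$, then comparing $s_1y\in y+\mathbb{Z}\alpha_1$ with $s_2^{-1}\cdots s_n^{-1}y\in y+\mathrm{span}(\alpha_2,\dots,\alpha_n)$ and using linear independence of the simple roots forces $y$ to be fixed by each $s_i$, hence orthogonal to every simple root with respect to the symmetric bilinear form of $q_C$, hence zero by positive definiteness. Either justification is acceptable; your proof stands.
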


%\begin{Lem}\label{rootnegative}
%Assume that $C$ is of Dynkin type. For a positive root $\underline{x}$, we have

%$(a)~c^+\underline{x}\ngtr 0$ if and only if $\underline{x}=\beta_i$ for some $i$ such that $1\leq i\leq n$.

%$(b)~c^-\underline{x}\ngtr 0$ if and only if $\underline{x}=\gamma_i$ for some $i$ such that $1\leq i\leq n$.
%\end{Lem}

\begin{Def}
For a Frobenius-type triangular matrix algebra $\Lambda$, let $X$ be a locally free $\Lambda$-module. Let $r_i$ be the rank of the free $A_i$-module $X_i$. Following \cite{GLS} we denote $$\underline{rank}(X):=(r_1,\cdots,r_n).$$
\end{Def}

\begin{Lem}\label{rankroot}
 $\underline{rank}(P_k)=\beta_k,~\underline{rank}(I_k)=\gamma_k$.
\end{Lem}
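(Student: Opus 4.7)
The plan is to prove both equalities by matching a recursion on the root-lattice side with one on the module-theoretic side. Concretely, I will show that both $\beta_k$ and $\underline{rank}(P_k)$ satisfy
\[
v_k \;=\; \alpha_k + \sum_{j=1}^{k-1}(-c_{jk})\, v_j
\]
with the common initial value $v_1 = \alpha_1$, and dually that both $\gamma_k$ and $\underline{rank}(I_k)$ satisfy
\[
v_k \;=\; \alpha_k + \sum_{j=k+1}^{n}(-c_{jk})\, v_j
\]
with the common initial value $v_n = \alpha_n$. Upward induction on $k$ then yields $\underline{rank}(P_k) = \beta_k$, and downward induction yields $\underline{rank}(I_k) = \gamma_k$.

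For the root-lattice side I would expand $\beta_k = s_1 s_2 \cdots s_{k-1}(\alpha_k)$ by peeling reflections off from the left. Using $s_j(\alpha_i) = \alpha_i - c_{ji}\alpha_j$ together with the defining formula $s_1 \cdots s_{\ell-1}(\alpha_\ell) = \beta_\ell$, a direct computation shows that the expression
\[
F_j \;:=\; s_1 \cdots s_{j-1}(\alpha_k) + \sum_{\ell=j}^{k-1}(-c_{\ell k})\beta_\ell
\]
is independent of $j \in \{1,\ldots,k\}$: the difference $F_j - F_{j-1}$ equals $s_1 \cdots s_{j-2}(-c_{j-1,k}\alpha_{j-1}) + c_{j-1,k}\beta_{j-1} = 0$. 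Evaluating at $j=k$ and $j=1$ gives $\beta_k = F_k = F_1 = \alpha_k + \sum_{j=1}^{k-1}(-c_{jk})\beta_j$. The identical telescoping applied to $\gamma_k = s_n s_{n-1} \cdots s_{k+1}(\alpha_k)$ produces $\gamma_k = \alpha_k + \sum_{j=k+1}^{n}(-c_{jk})\gamma_j$.

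For the module side I would apply the rank functor to the short exact sequence (\ref{rankproj}) specialized to $i = k$. Each summand of $A_{ij}$ is a free left $A_i$-module by construction, so $P_k$ and each $P_j \otimes_{A_j} B_{jk}$ are locally free $\Lambda$-modules, and Proposition \ref{closed} ensures the rank vector is additive on the sequence. Since $B_{jk}$ is a free left $A_j$-module of rank $-c_{jk}$, we obtain $P_j \otimes_{A_j} B_{jk} \cong P_j^{\oplus(-c_{jk})}$ as left $\Lambda$-modules, whence
\[
\underline{rank}(P_k) = \underline{rank}(E_k) + \sum_{j=1}^{k-1}(-c_{jk})\,\underline{rank}(P_j) = \alpha_k + \sum_{j=1}^{k-1}(-c_{jk})\,\underline{rank}(P_j).
\]
An analogous treatment of the injective resolution (\ref{injresolution}), using that for $k < j$ the bimodule $B_{kj}$ is a free \emph{right} $A_j$-module of rank $-c_{jk}$ and therefore $D(B_{kj}\otimes_{A_j}e_j\Lambda) \cong I_j^{\oplus(-c_{jk})}$ as left $\Lambda$-modules, yields the corresponding downward recursion for $\underline{rank}(I_k)$.

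The only delicate bookkeeping point, and hence the main place where care is required, is distinguishing left and right ranks of the bimodules: for $i < j$ one has $-c_{ij}$ as the left $A_i$-rank of $B_{ij}$ but $-c_{ji}$ as the right $A_j$-rank, and these are the two different coefficients which govern the projective and injective resolutions respectively. Once this is handled correctly so that the coefficient appears as $-c_{jk}$ in each case, the two module-theoretic recursions line up exactly with those for $\beta_k$ and $\gamma_k$, and the two inductions close without further work.
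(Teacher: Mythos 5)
Your proposal is correct and follows essentially the same route the paper intends: the paper's proof defers to \cite[Lemma 3.2]{GLS} and explicitly says it uses the resolutions (\ref{rankproj}) and (\ref{injresolution}), which is precisely the recursion $\underline{rank}(P_k)=\alpha_k+\sum_{j<k}(-c_{jk})\underline{rank}(P_j)$ (and its dual) that you match against the telescoped identities $\beta_k=\alpha_k+\sum_{j<k}(-c_{jk})\beta_j$ and $\gamma_k=\alpha_k+\sum_{j>k}(-c_{jk})\gamma_j$. Your explicit verification of the telescoping and of the left/right rank bookkeeping simply fills in details the paper leaves to the reader.
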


This lemma is a generalization of \cite[Lemma 3.2 and 3.3]{GLS}. Its proof is similar to the proof of \cite[Lemma 3.2]{GLS}, using of the resolution (\ref{rankproj}) and (\ref{injresolution}).

\begin{Def}
  For a Frobenius-type triangular matrix algebra $\Lambda$, following \cite{GLS}, a $\Lambda$-module $X$ is called {\em $X~\tau$-locally free} if  $\tau^k(X)$ is locally free for all $k\in \mathbb{Z}$. Moreover,  $X$ is called  {\em indecomposable $\tau$-locally free} if $X$ cannot  be written as a sum of two proper $\tau$-locally free $\Lambda$-modules.
\end{Def}

The following proposition is a generalization of \cite[Proposition 9.6 and 11.4]{GLS}.
\begin{Prop}\label{taulf}
For a Frobenius-type triangular matrix algebra $\Lambda$, let $X$ be a rigid and locally free $\Lambda$-module, \\
$(i)~ F_1^+(X)$ is a rigid and locally free $S_1(\Lambda)$-module and $F_n^-$ is a rigid and locally free $S_{n-i}\cdots S_1(\Lambda)$-module;\\
$(ii)~ X$ is $\tau$-locally free and $\tau^k(X)$ is rigid for all $k\in \mathbb{Z}$.
\end{Prop}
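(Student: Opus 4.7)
The plan is to prove Part (i) by direct analysis of the reflection functor $F_1^+$ (the statement for $F_n^-(X)$ is completely dual, using cokernels in place of kernels), and then to derive Part (ii) by iterating Part (i) through the Coxeter-functor identity of Theorem \ref{reflection functor}.

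For Part (i), I would write $F_1^+(X)=(X_2,\ldots,X_n,X_1')$ with $X_1'={Ker}(X_{1,in})$. Since $X_2,\ldots,X_n$ are unchanged and already free over the respective $A_k$, it remains to show (a) $X_1'$ is $A_1$-free, and (b) the resulting $S_1(\Lambda)$-module is rigid. For (a), consider the four-term $A_1$-exact sequence
\begin{equation*}
0\to X_1'\to\bigoplus_{k=2}^n B_{1k}\otimes_{A_k}X_k \xrightarrow{X_{1,in}} X_1\to M\to 0,\qquad M:={Coker}(X_{1,in}),
\end{equation*}
whose middle terms are $A_1$-free. I would first reduce to the case that $X$ has no summand of the form $(P,0,\ldots,0)^t$ for a nonzero projective $A_1$-module $P$: such summands are killed by $F_1^+$, and both rigidity and local freeness descend to complements. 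In the reduced case I claim $M=0$. The key observation is that the $\Lambda$-short exact sequence $0\to\tilde N\to X\to\tilde M\to 0$ with $\tilde N=({Im}(X_{1,in}),X_2,\ldots,X_n)^t$ and $\tilde M=(M,0,\ldots,0)^t$ splits as soon as $M$ is $A_1$-projective, because any $A_1$-section of $X_1\twoheadrightarrow M$ extends trivially (by zero on the higher components) to a $\Lambda$-section; any such splitting would produce the forbidden summand, so after the reduction we must have $M=0$ once $M$ is known to be projective. That $M$ is $A_1$-projective is extracted from rigidity of $X$: applying ${Hom}_\Lambda(X,-)$ to $0\to\tilde N\to X\to\tilde M\to 0$ and combining ${Ext}^1_\Lambda(X,X)=0$ with ${proj.dim}(X)\leq 1$ from Corollary \ref{projdim} gives ${Ext}^1_\Lambda(X,\tilde M)=0$. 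Unpacking this via the projective resolution of Corollary \ref{projresolution}---where the four-term sequence above appears as a length-two $A_1$-projective presentation of $M$---transports the vanishing to ${Ext}^1_{A_1}(M,M)=0$, which on the self-injective algebra $A_1$ forces $M$ to be projective. With $M=0$, the remaining short exact sequence splits over $A_1$ and $X_1'$ is a summand of a free $A_1$-module, hence $A_1$-free.

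For (b), I would combine the adjunction of Proposition \ref{adj} with its analogue for $F_1^-$: the pair $(F_1^-,F_1^+)$ restricts to mutually quasi-inverse equivalences on the full subcategories of locally free modules where $X_{1,in}$ is surjective (respectively $X_{1,out}$ is injective); such equivalences transport ${Ext}^1$-groups, yielding ${Ext}^1_{S_1(\Lambda)}(F_1^+(X),F_1^+(X))\cong{Ext}^1_\Lambda(X,X)=0$. Part (ii) then follows by iteration: Theorem \ref{reflection functor} gives $\tau(X)\cong TC^+(X)=T\,F_n^+F_{n-1}^+\cdots F_1^+(X)$, and applying Part (i) successively along the chain of Frobenius-type algebras $\Lambda,\,S_1(\Lambda),\,\ldots,\,S_n\cdots S_1(\Lambda)\cong\Lambda$ shows $C^+(X)$ is rigid and locally free; since $T$ is an autoequivalence of ${rep}(\Lambda)$ preserving both properties, so is $\tau(X)$. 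The dual argument using the $F_k^-$'s handles $\tau^-(X)$, and induction on $|k|$ covers every $k\in\mathbb Z$.

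The hardest step is showing that rigidity forces $A_1$-projectivity of $M={Coker}(X_{1,in})$. In the classical hereditary setting this cokernel simply vanishes for rigid modules without the relevant simple summand, but in the Frobenius-type setting one must transport ${Ext}^1$-vanishing between $\Lambda$-modules and the individual Frobenius algebra $A_1$, which requires careful use of the projective resolution of Corollary \ref{projresolution} and the self-injectivity of $A_1$.
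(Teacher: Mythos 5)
Your overall strategy is the one the paper itself relies on (the paper's proof simply defers to Propositions 9.6 and 11.4 of \cite{GLS}, adding only a remark about minimal projective resolutions): use rigidity to kill the cokernel $M$ of $X_{1,in}$, deduce that the kernel is free, transport rigidity through the reflection, and iterate via Theorem \ref{reflection functor} for part (ii). Several of your steps are correct and well observed: the derivation of ${Ext}^1_\Lambda(X,\tilde M)=0$ from ${Ext}^1_\Lambda(X,X)=0$ and ${proj.dim}(X)\leq 1$, and the remark that an $A_1$-section of $X_1\twoheadrightarrow M$ extends by zero to a $\Lambda$-section because vertex $1$ is a sink. But there is a genuine gap at exactly the point where the passage from \cite{GLS} to general Frobenius algebras matters: you claim that ${Ext}^1_{A_1}(M,M)=0$ ``on the self-injective algebra $A_1$ forces $M$ to be projective.'' This is false for a general Frobenius algebra. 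For instance, the preprojective algebra of type $A_2$ is Frobenius, and each of its simple modules $S$ satisfies ${Ext}^1(S,S)=0$ without being projective. In \cite{GLS} the vertex algebras are the local uniserial algebras $k[\varepsilon]/(\varepsilon^{c})$, over which rigid does imply free; that special feature is precisely what your argument is silently importing, and it is unavailable here.

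To close the gap you would have to use the full strength of ${Ext}^1_\Lambda(X,\tilde M)=0$, which via Corollary \ref{projresolution} says that every $A_1$-map $\bigoplus_{k\geq 2} B_{1k}\otimes X_k\to M$ factors through $X_{1,in}$ --- strictly more information than the self-extension vanishing of $M$ over $A_1$ --- or else show directly that ${Ext}^1_{A_1}(M,{Im}(X_{1,in}))=0$, which is what the desired splitting of $X_1\twoheadrightarrow M$ actually requires; as written, the step would fail. Two smaller points in the same vein: splitting off a summand $(P,0,\ldots,0)^t$ with $P$ projective but not free can destroy local freeness of the complement at vertex $1$ (over a non-local Frobenius algebra, a projective direct summand of a free module need not have free complement), so your reduction should be restricted to free $P$ or rephrased so that local freeness of the complement at vertex $1$ is never used; and ``$X_1'$ is a summand of a free $A_1$-module, hence $A_1$-free'' is only valid because the complement is the free module $X_1$, so that Krull--Schmidt applies --- the bare implication ``summand of free, hence free'' is false over non-local Frobenius algebras.
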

\begin{proof}
  Its proof is identical to the proof of \cite[Proposition 9.6 and 11.4]{GLS}. The only illustration we need to add is the fact that for a locally free $\Lambda$-module $X$ and its minimal projective resolution of the form
$$0\rightarrow P''\rightarrow P'\rightarrow X\rightarrow 0,$$
we have that $P'$ and $P''$ are two direct sums of $P_i$ for $1\leq i \leq n$. This follows from Proposition \ref{dimension1}, Corollary \ref{projdim} and the horseshoe lemma.
\end{proof}

\begin{Lem}\label{ranks1}
 For a Frobenius-type triangular matrix algebra $\Lambda$, let $X$ be an indecomposable $\tau$-locally free $\Lambda$-module. Then $X$ is isomorphic to $E_1$ if and only if $F_1^+(X)=0$ (or equivalently, $s_1(\underline{rank}(X))\ngtr 0).$ If $X\ncong E_1,$ then $F_1^+(X)$ is an indecomposable $\tau$-locally free $S_1(\Lambda)$-module and $\underline{rank}(F_1^+(X))=s_1(\underline{rank}(X))$.
\end{Lem}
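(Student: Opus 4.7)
Proof proposal. My plan is to argue in three parts. The equivalence of $X \cong E_1$ with $F_1^+(X) = 0$ splits into two easy directions, while the combinatorial equivalence with $s_1(\underline{rank}(X)) \ngtr 0$ will be a bookkeeping consequence of the rank formula proven in the case $X \ncong E_1$, which is the main content. If $X \cong E_1$, then $(E_1)_k = 0$ for $k \geq 2$ makes the domain of $X_{1,in}$ trivial, so $X_1' = 0$ and $F_1^+(E_1) = 0$. Conversely, $F_1^+(X) = 0$ forces $X_k = 0$ for $k \geq 2$ by inspection of the first $n-1$ components, so $X$ is concentrated at vertex $1$ as a free $A_1$-module, and indecomposability identifies $X$ with $E_1$.

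For $X$ indecomposable $\tau$-locally free with $X \ncong E_1$, the crucial step is surjectivity of $X_{1,in}$. Set $V := \mathrm{coker}(X_{1,in})$. Under the hypotheses, combining Corollary \ref{equal} with Proposition \ref{taulf}(i) gives that $F_1^+(X)$ is locally free, so in particular $X_1' = \ker(X_{1,in})$ is a free $A_1$-module. Consequently the exact sequence
\[
0 \to X_1' \to \bigoplus_{k=2}^n B_{1k} \otimes_{A_k} X_k \to \mathrm{Im}(X_{1,in}) \to 0
\]
has free outer terms (using that $B_{1k}$ is $A_1$-free of rank $-c_{1k}$ and $X_k$ is $A_k$-free), so $\mathrm{Im}(X_{1,in})$ is $A_1$-projective, hence $A_1$-injective since $A_1$ is Frobenius. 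Thus the inclusion $\mathrm{Im}(X_{1,in}) \hookrightarrow X_1$ splits as $A_1$-modules, and the canonical short exact sequence of $\Lambda$-modules
\[
0 \to X' \to X \to (V, 0, \ldots, 0)^t \to 0
\]
with $X'_1 = \mathrm{Im}(X_{1,in})$, $X'_k = X_k$ for $k \geq 2$, splits as well (the chosen $A_1$-splitting is automatically compatible with the $\Lambda$-structure since the target has trivial structure maps and images of $\phi_{1j}$ land in $\mathrm{Im}(X_{1,in})$). This exhibits a direct sum of copies of $E_1$ as a summand of $X$, and indecomposability together with $X \ncong E_1$ forces $V = 0$.

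With $X_{1,in}$ surjective, the sequence $0 \to X_1' \to \bigoplus_{k=2}^n B_{1k} \otimes_{A_k} X_k \to X_1 \to 0$ splits since $X_1$ is $A_1$-projective, so $X_1'$ is free of rank $\sum_{k \geq 2}(-c_{1k}) r_k - r_1 = s_1(\underline{rank}(X))_1$; the other coordinates coincide with those of $\underline{rank}(X)$, giving $\underline{rank}(F_1^+(X)) = s_1(\underline{rank}(X))$. A direct computation shows $F_1^- F_1^+(X) \cong X$ (the cokernel of the inclusion $X_1' \hookrightarrow \oplus_k B_{1k} \otimes X_k$ is exactly $\mathrm{Im}(X_{1,in}) = X_1$), so a nontrivial decomposition $F_1^+(X) = U \oplus V$ would descend to $X = F_1^-(U) \oplus F_1^-(V)$, contradicting indecomposability (a short check rules out the degenerate possibility $F_1^-(V) = 0$ with $V \neq 0$, since any such $V$ would lie in the intersection of all projection kernels of $\oplus_k B_{1k} \otimes X_k$, which is zero). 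Finally, $\tau$-local freeness of $F_1^+(X)$ is inherited from Proposition \ref{taulf}: rigidity and local freeness of $X$ yield rigidity and local freeness of $F_1^+(X)$, hence $\tau$-local freeness.

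The main obstacle is the surjectivity step. The classical BGP argument over a field uses that every subspace has a complement; over a general Frobenius algebra $A_1$ one must combine the freeness of $\ker(X_{1,in})$ (extracted from the $\tau$-locally free hypothesis via Proposition \ref{taulf}) with the Frobenius property (projective equals injective over $A_1$) in order to obtain an $A_1$-module splitting of $\mathrm{Im}(X_{1,in}) \hookrightarrow X_1$ that can be lifted to a direct summand in $\mathrm{rep}(\Lambda)$.
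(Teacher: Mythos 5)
Your overall strategy---split $\mathrm{coker}(X_{1,in})$ off $X$ as a direct sum of copies of $E_1$ by showing $\mathrm{Im}(X_{1,in})\hookrightarrow X_1$ splits over the Frobenius algebra $A_1$, deduce surjectivity of $X_{1,in}$ from indecomposability, read off the rank from the resulting short exact sequence, and get indecomposability of $F_1^+(X)$ from $F_1^-F_1^+(X)\cong X$---is sound and is in fact considerably more detailed than the paper's own proof, which simply asserts that indecomposability plus $\tau$-local freeness force $X_{1,in}$ to be surjective and then computes the rank. But there is a genuine gap at exactly the step where the $\tau$-locally free hypothesis has to do real work. You justify the freeness of $X_1'=\ker(X_{1,in})$ by invoking Proposition \ref{taulf}(i) (via Corollary \ref{equal}); that proposition requires $X$ to be \emph{rigid} and locally free, and an indecomposable $\tau$-locally free module need not be rigid (outside Dynkin type there are non-rigid ones, and Lemma \ref{ranks1} is used in the proof of Theorem \ref{mainone} for arbitrary $C$). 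Corollary \ref{equal} only converts rigidity into $\tau$-rigidity for locally free modules; it does not supply rigidity. Since your entire surjectivity argument rests on this freeness (free kernel $\Rightarrow$ injective kernel $\Rightarrow$ projective image $\Rightarrow$ the inclusion splits), while conversely the intended route obtains freeness of the kernel only \emph{after} surjectivity is known (split sequence of projectives plus Krull--Schmidt), the step as written is unsupported; the same problem recurs at the end when you deduce $\tau$-local freeness of $F_1^+(X)$ from the rigidity-based Proposition \ref{taulf}.

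The gap is repairable inside the paper's framework, and this is presumably what the authors have in mind: since $X$ is $\tau$-locally free, $\tau(X)$ is locally free, and by Theorem \ref{reflection functor} one has $\tau(X)\cong TC^+(X)$ with $C^+=F_n^+\cdots F_1^+$. The functors $F_2^+,\dots,F_n^+$ do not alter the component sitting at vertex $1$, and $T$ does not change underlying modules, so $(\tau(X))_1\cong (C^+(X))_1=\ker(X_{1,in})$ as $A_1$-modules, which is therefore free. With that substitution your splitting argument goes through (you should also add a word, via stable freeness and Krull--Schmidt, on why the complement $C$ of $\mathrm{Im}(X_{1,in})$ in $X_1$ is itself free, so that $(C,0,\dots,0)$ really is a direct sum of copies of $E_1$), and the remaining parts of your proposal---the two easy directions of the equivalence, the rank computation, and the indecomposability of $F_1^+(X)$ including the exclusion of summands concentrated at the new source---are correct.
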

\begin{proof}
Since $X$ is indecomposable $\tau$-locally free, we obtain that $X_{1,in}$ is surjective. If $X\ncong E_1,$ we have an exact sequence
$$\xymatrix@C=0.5cm{
  0 \ar[r] & {Ker}(X_{1,in}) \ar[r]  & \oplus_{k=2}^nB_{1k}\otimes X_k \ar[r] & X_1 \ar[r] & 0 }$$
So $$(\underline{rank}(F_1^+(X)))_1=\sum_{k=2}^n |c_{1k}|a_k-a_1=(s_1(\underline{rank}(X)))_1.$$

If $X\cong E_1,$ we have $F_1^+(X)=0$ by the definition of $F_1^+$. And $s_1(\underline{rank}(E_1))_1=-1$.
\end{proof}

We need the following two propositions, which are generalizations of \cite[Proposition 11.5 and 11.6]{GLS} respectively. Their proofs are similar.
\begin{Prop}\label{ranktau}
For a Frobenius-type triangular matrix algebra $\Lambda$ and  a indecomposable $\tau$-locally free $\Lambda$-module $X$, the following statements hold:

$(i)$ If $\tau^k(X)\neq 0$ for some $k\in \mathbb{Z}$, then $\underline{rank}(\tau^k(X))=c^k(\underline{rank}(X))$;

$(ii)$If $\tau^k(X)\neq 0$ for some $k\in \mathbb{Z}$ and $\underline{rank}(X)$ is contained in $\Delta^+(C)$, then $\underline{rank}(\tau^k(X))$ is in $\Delta^+(C)$.
\end{Prop}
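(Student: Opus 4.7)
The plan is to prove (i) by induction on $|k|$, reducing the one-step case $|k|=1$ to the action of the simple reflections $s_1,\ldots,s_n$ on the rank vector via the decomposition of the Coxeter functor into BGP-reflections, and then to deduce (ii) immediately from (i) using the Weyl-group invariance of the root system.

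By Theorem \ref{reflection functor}, $\tau(X) \cong T C^+(X) = T \circ F_n^+ \circ \cdots \circ F_1^+(X)$, and the twist $T$ preserves rank vectors. Assume $\tau(X)\neq 0$; then none of the intermediate modules $X^{(i)} := F_i^+ \circ \cdots \circ F_1^+(X)$ can vanish, and each $X^{(i)}$ is a module over the Frobenius-type triangular matrix algebra $S_i\cdots S_1(\Lambda)$. Starting from $X^{(0)}=X$ (indecomposable $\tau$-locally free by hypothesis), I would iteratively apply the evident analog of Lemma \ref{ranks1} for each algebra $S_{i-1}\cdots S_1(\Lambda)$ and its first-vertex reflection functor $F_i^+$. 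Each step produces an indecomposable $\tau$-locally free module whose rank vector is the image of the previous one under the simple reflection $s_i$. Chaining the $n$ steps yields
\[
\underline{rank}(\tau(X)) = \underline{rank}(C^+(X)) = s_n s_{n-1}\cdots s_1(\underline{rank}(X)) = c^+(\underline{rank}(X)),
\]
which is the case $k=1$.

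For general $k\geq 2$, induction applies because $\tau^{k-1}(X)$ is rigid and locally free by Proposition \ref{taulf}(ii), and it remains indecomposable since $\tau$ induces an equivalence on the stable module category; applying the $k=1$ case to $\tau^{k-1}(X)$ then gives $\underline{rank}(\tau^k(X)) = c^+\underline{rank}(\tau^{k-1}(X)) = c^k(\underline{rank}(X))$. The argument for $k\leq -1$ is entirely parallel, using $\tau^-(X)\cong TC^-(X) = T\circ F_1^-\circ\cdots\circ F_n^-(X)$ and the dual form of Lemma \ref{ranks1}. This completes part (i).

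Part (ii) is then immediate: by (i), $\underline{rank}(\tau^k(X)) = c^k(\underline{rank}(X))$; since $c^k$ lies in the Weyl group $W(C)$, which permutes $\Delta(C)$, the vector $c^k(\underline{rank}(X))$ belongs to $\Delta(C)$ whenever $\underline{rank}(X)$ does. Since $\tau^k(X)$ is a nonzero locally free $\Lambda$-module, its rank vector is a nonzero element of $\mathbb{N}^n$, and so it lies in $\Delta(C)\cap \mathbb{N}^n = \Delta^+(C)$. The principal obstacle is the bookkeeping in part (i): one must verify that the analog of Lemma \ref{ranks1} holds verbatim for each intermediate algebra $S_{i-1}\cdots S_1(\Lambda)$ (which is itself a Frobenius-type triangular matrix algebra, as established in Section 3) and that the indecomposability and locally free property survive along the chain of reflections; this is exactly the content provided by Proposition \ref{taulf} applied iteratively.
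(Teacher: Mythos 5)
Your proposal is correct and follows essentially the same route as the paper, which gives no independent argument but defers to \cite[Propositions 11.5 and 11.6]{GLS}, whose proofs are precisely this decomposition of $C^+$ into the reflection functors $F_n^+\cdots F_1^+$ combined with the one-step rank computation of Lemma \ref{ranks1} and, for (ii), Weyl-group invariance of $\Delta(C)$. One small slip: to iterate from $k-1$ to $k$ you invoke Proposition \ref{taulf}(ii), which assumes $X$ is \emph{rigid} --- not a hypothesis here; what you actually need, namely that $\tau^{k-1}(X)$ is again indecomposable and $\tau$-locally free, follows directly from the definition of $\tau$-locally free together with the fact that $\tau$ carries indecomposable non-projectives to indecomposables, so the argument stands after replacing that citation.
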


\begin{Prop}\label{tauroot}
For a Frobenius-type triangular matrix algebra $\Lambda$ and a $\Lambda$-module $X$, if either $X\cong \tau^{-k}(P_i)$ or $X\cong \tau^k(I_i)$ for some $k\geq 0$ and $1\leq i\leq n$, the following statements hold:

$(i)~~X$ is $\tau$-locally free and rigid;

$(ii)~~\underline{rank}(X)\in \Delta_{re}^+(C)$;

$(iii)~~$If either $Y\cong \tau^{-m}(P_j)$ or $Y\cong \tau^m(I_j)$ for some $m\geq 0$ and $1\leq j\leq n$ with $\underline{rank}(X)=\underline{rank}(Y)$, then $X\cong Y$.
\end{Prop}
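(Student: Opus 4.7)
The plan is to prove (i), (ii), (iii) in order, leaning on the rank--Weyl dictionary of Proposition \ref{ranktau} and the distinctness Lemmas \ref{rootinfinite} and \ref{rootfinite}.

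For (i), I would first check that $P_i$ and $I_i$ are themselves rigid and locally free. Rigidity is automatic (${Ext}^1_\Lambda(P_i,-)=0$ and dually for $I_i$), and local freeness of $P_i$ follows from the column description (\ref{proj}) together with (\ref{grade}): each $A_{ji}$ is a direct sum of tensor products of bimodules $B_{\bullet\bullet}$ that are free as left modules over the leading factor, and such iterated tensor products over Frobenius algebras remain free. For $I_i\cong D(e_i\Lambda)$ one argues symmetrically, using that $A_{ij}$ is free as a right $A_j$-module together with the Frobenius identification $DA_j\cong A_j$ (compare Lemma \ref{dual}). Then Proposition \ref{taulf}(ii), applied to $X=P_i$ and $X=I_i$, immediately yields that every $\tau^{-k}(P_i)$ and $\tau^k(I_i)$ is $\tau$-locally free and rigid.

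For (ii), I would combine Lemma \ref{rankroot} with Proposition \ref{ranktau}(i) to obtain $\underline{rank}(\tau^{-k}(P_i))=c^{-k}(\beta_i)$ and $\underline{rank}(\tau^k(I_i))=c^k(\gamma_i)$. Since $\beta_i$ and $\gamma_i$ are Weyl transforms of the simple root $\alpha_i$, every such further Coxeter transform lies in $\Delta_{re}(C)$; positivity is then immediate from the local freeness of the nonzero $X$ proved in (i).

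The substance of the proposition is (iii), which I would split on whether $C$ is of Dynkin type. In the non-Dynkin case, Lemma \ref{rootinfinite} states that the full collection $\{c^{-r}(\beta_i),c^s(\gamma_j):r,s\geq 0,\;1\leq i,j\leq n\}$ consists of pairwise distinct elements of $\Delta_{re}^+(C)$, so matching $\underline{rank}(X)=\underline{rank}(Y)$ forces the type (projective vs.\ injective), the index, and the power of $\tau$ to coincide, hence $X\cong Y$. In the Dynkin case, Lemma \ref{rootfinite} gives pairwise distinctness only inside each of the two families separately. The same-type subcases are then handled immediately, since the constraint $\underline{rank}(X)\in\mathbb{N}^n$ automatically bounds $k\leq p_i-1$ (resp.\ $m\leq q_j-1$) and Lemma \ref{rootfinite} then forces the parameters to agree. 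For the mixed subcase $X\cong\tau^{-k}(P_i)$ and $Y\cong\tau^m(I_j)$, I plan to rewrite $X$ in injective form by tracing the $\tau^-$-orbit of $P_i$ to its terminal injective: $\tau^{-p_i}(P_i)=0$ (otherwise its rank would be $c^{-p_i}(\beta_i)\in\mathbb{N}^n$, contradicting the definition of $p_i$), so by the standard fact that an indecomposable killed by $\tau^{-1}$ is injective, $\tau^{-(p_i-1)}(P_i)\cong I_{\sigma(i)}$ for some index $\sigma(i)$. This gives $X\cong\tau^{p_i-1-k}(I_{\sigma(i)})$, and equating ranks with $Y$ and invoking the injective half of Lemma \ref{rootfinite} forces $\sigma(i)=j$ and $p_i-1-k=m$, yielding $X\cong Y$.

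The main obstacle I expect is precisely this Nakayama step in the Dynkin mixed case: justifying that the $\tau^-$-orbit of $P_i$ really terminates at an indecomposable injective $I_{\sigma(i)}$ after exactly $p_i-1$ steps. It requires indecomposability to be preserved along the $\tau^-$-orbit (standard Auslander--Reiten theory) together with the matching between the root-theoretic termination index $p_i$ and the module-theoretic orbit length, which must be carried out inside ${rep}_{l.f.}(\Lambda)$ via Proposition \ref{taulf} and Proposition \ref{ranktau}(i) applied stepwise along the orbit.
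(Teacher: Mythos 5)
First, a point of calibration: the paper gives no argument at all for this proposition --- it only records that it is a generalization of \cite[Propositions 11.5 and 11.6]{GLS} and that ``their proofs are similar'' --- so your write-up has to be measured against that GLS template rather than against anything in the text. Your parts (i) and (ii) follow that template and are correct: $P_i$ and $I_i$ are rigid for trivial reasons and locally free because each $A_{ji}$ (resp.\ $D(A_{ij})$) is free over the relevant $A_j$ by (\ref{grade}) and Lemma \ref{dual}, so Proposition \ref{taulf}(ii) yields (i), and Lemma \ref{rankroot} together with Proposition \ref{ranktau}(i) yields (ii). The non-Dynkin case of (iii) via Lemma \ref{rootinfinite} and the two same-type Dynkin subcases via Lemma \ref{rootfinite} are also fine.

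The genuine gap is exactly the Nakayama step you flag, and your proposed fix (``an indecomposable killed by $\tau^{-}$ is injective'') does not suffice in this generality. Here the idempotents $e_i$ are the units of the Frobenius algebras $A_i$ and need not be primitive, so $P_i=\Lambda e_i$ and $I_j=D(e_j\Lambda)$ are in general decomposable as modules; ``indecomposable'' in this paper only ever means indecomposable as a $\tau$-locally free module. Hence $\tau^{-(p_i-1)}(P_i)$ being killed by $\tau^-$ only tells you it is injective, not that it is isomorphic to some $I_{\sigma(i)}$. To close the argument you would need two additional facts: (a) every injective locally free module is a direct sum of copies of the $I_j$'s (induct on the smallest vertex carrying an indecomposable summand, using that over a Frobenius algebra a projective complement of a free direct summand of a free module is again free), and (b) $P_i$, hence each nonzero $\tau^{-r}(P_i)$, is indecomposable as a $\tau$-locally free module (a locally free direct summand of $\Lambda e_i$ is forced by Krull--Schmidt to be $P_i^{(s)}$ with $s\in\{0,1\}$). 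Alternatively, the mixed Dynkin subcase can be bypassed altogether: once (i) and (b) are in place, $X$ and $Y$ are indecomposable $\tau$-locally free, and the reflection-functor reduction used to prove injectivity of $\underline{rank}$ in Theorem \ref{mainone}(b) --- send both to $E_{i+1}$ by $F_i^+\cdots F_1^+C^{+t}$ and invert --- applies verbatim to any pair of indecomposable $\tau$-locally free modules with equal rank vectors, with no need to convert preprojectives into preinjectives.
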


\begin{Thm} \label{mainone} For a Frobenius-type triangular matrix algebra $\Lambda$, the following statements hold:

(a) The number of isomorphism classes of indecomposable $\tau$-locally free $\Lambda$-modules is finite if and only if $C$ is of Dynkin type.

(b) If $C$ is of Dynkin type, then the mapping $\underline{rank}:X\mapsto \underline{rank}(X)$ induces a bijection between the set of isomorphism classes of indecomposable $\tau$-locally free $\Lambda$-modules and the set of positive roots of the quadratic form $q_C(x)$.
\end{Thm}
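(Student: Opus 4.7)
The overall strategy is the standard BGP--Gabriel reduction adapted to $\tau$-locally free modules: every indecomposable $\tau$-locally free module will be shown to be transported to an indecomposable projective by iterated AR-translations, and Proposition \ref{ranktau} will match this transport with the Coxeter action on $\mathbb{Z}^n$.

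\emph{Plan for (b).} Assume $C$ is of Dynkin type, and let $X$ be an indecomposable $\tau$-locally free $\Lambda$-module, so $\underline{x}:=\underline{rank}(X)>0$. I will apply Lemma \ref{positvenegative} to choose the minimal $s\geq 0$ with $c^{s+1}(\underline{x})\ngtr 0$. Because $X$ is $\tau$-locally free, each nonzero $\tau^k(X)$ is indecomposable $\tau$-locally free with strictly positive rank, and Proposition \ref{ranktau}(i) gives $\underline{rank}(\tau^k(X))=c^k(\underline{x})$ in that case. Hence the failure of positivity at step $s+1$ forces $\tau^{s+1}(X)=0$, so the indecomposable $\tau^s(X)$ is projective, say $\tau^s(X)\cong P_i$, yielding $X\cong\tau^{-s}(P_i)$ and $\underline{rank}(X)=c^{-s}(\beta_i)\in\Delta^+(C)$. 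Conversely, by Lemma \ref{rootfinite} every $\alpha\in\Delta^+(C)$ has the form $c^{-r}(\beta_i)$, realized as $\underline{rank}(\tau^{-r}(P_i))$, where $\tau^{-r}(P_i)$ is a nonzero indecomposable $\tau$-locally free module by Proposition \ref{tauroot}(i). Injectivity of the rank map is then supplied directly by Proposition \ref{tauroot}(iii).

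\emph{Plan for (a).} The implication ``$C$ Dynkin $\Rightarrow$ finitely many isomorphism classes'' is immediate from (b) since $\Delta^+(C)$ is finite in the Dynkin case (Fact \ref{fact}). For the contrapositive, assume $C$ is not Dynkin. I will first show $\tau^{-r}(P_i)\neq 0$ for every $r\geq 0$ and every $i$. Otherwise, let $r_0\geq 1$ be minimal with $\tau^{-r_0}(P_i)=0$; then $\tau^{-(r_0-1)}(P_i)$ is a nonzero injective indecomposable, say $I_j$, whose rank is both $c^{-(r_0-1)}(\beta_i)$ (by Proposition \ref{ranktau}(i)) and $\gamma_j$ (by Lemma \ref{rankroot}). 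This contradicts Lemma \ref{rootinfinite}, which guarantees $\{c^{-r}(\beta_i)\}$ and $\{c^s(\gamma_j)\}$ are pairwise disjoint subsets of $\Delta^+_{re}(C)$ in the non-Dynkin case. Therefore all modules $\tau^{-r}(P_i)$ are nonzero indecomposable $\tau$-locally free with pairwise distinct ranks $c^{-r}(\beta_i)$ (Lemma \ref{rootinfinite} again), producing infinitely many pairwise non-isomorphic indecomposable $\tau$-locally free $\Lambda$-modules.

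The chief obstacle is the BGP-style reduction inside (b), namely the step inferring that $\tau^{s+1}(X)=0$ forces $\tau^s(X)$ to be an indecomposable projective. This rests on two ingredients already in place: the closure of ${rep}_{l.f.}(\Lambda)$ under $\tau$-translates of indecomposables (Proposition \ref{taulf}), and the combinatorial identity $\underline{rank}\circ\tau^k=c^k\circ\underline{rank}$ from Proposition \ref{ranktau}, whose validity ultimately traces back to Theorem \ref{reflection functor}. A secondary but crucial technical point is the non-vanishing of $\tau^{-r}(P_i)$ in the non-Dynkin case, which I resolve above by confronting the hypothetical injective endpoint with Lemma \ref{rootinfinite}.
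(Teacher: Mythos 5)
Your argument is correct in substance, but it routes part (b) differently from the paper. The paper refines each Coxeter step into individual reflection functors: it takes the least $t$ with $c^{t+1}\underline{x}\ngtr 0$, then the least $i$ with $s_{i+1}\cdots s_1c^{t}\underline{x}\ngtr 0$, transports $X$ to $X'=F_i^+\cdots F_1^+C^{+t}X$, identifies $X'\cong E_{i+1}$ via Lemma \ref{ranks1}, and gets both surjectivity and injectivity by inverting this chain of functors ($X\cong C^{-t}F_1^-\cdots F_i^-E_{i+1}$). You instead work only with full $\tau$-steps, land on an indecomposable projective $P_i$ rather than on $E_{i+1}$, and outsource injectivity to Proposition \ref{tauroot}(iii). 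Both versions rest on the same machinery (Theorem \ref{reflection functor} via Proposition \ref{ranktau}); yours is cleaner on injectivity and closer to classical AR-theoretic phrasing, while the paper's finer reduction to $E_{i+1}$ is exactly what Lemma \ref{ranks1} was set up for and avoids invoking Proposition \ref{tauroot}(iii) at all. Your treatment of part (a) matches the paper's (which is terse there) and usefully fills in why $\tau^{-r}(P_i)\neq 0$ in the non-Dynkin case.

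Two small steps should be made explicit. First, from $\tau^{s+1}(X)=0$ you conclude that $\tau^{s}(X)$ is projective, which presupposes $\tau^{s}(X)\neq 0$. This does hold: if $\tau^{j}(X)=0$ for some minimal $1\leq j\leq s$, then $\tau^{j-1}(X)\cong P_i$ has rank $c^{j-1}(\underline{x})=\beta_i$ by Proposition \ref{ranktau}(i) and Lemma \ref{rankroot}, whence $c^{j}(\underline{x})=c^+(\beta_i)=-\gamma_i\ngtr 0$, contradicting the minimality of $s$. Second, for surjectivity you need $\tau^{-r}(P_i)\neq 0$ for all $0\leq r\leq p_i-1$ in the Dynkin case; Proposition \ref{tauroot}(i) gives local freeness and rigidity but not non-vanishing, and your Lemma \ref{rootinfinite} argument applies only when $C$ is not Dynkin. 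The fix is parallel to the first point: if $\tau^{-(r-1)}(P_i)\cong I_j$ for some $1\leq r\leq p_i-1$, then $c^{-r}(\beta_i)=c^{-1}(\gamma_j)=-\beta_j\ngtr 0$, contradicting $r\leq p_i-1$. With these two lines added, the proof is complete.
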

\begin{proof}
 In the case that $C$ is not of Dynkin type, by Lemma \ref{rootinfinite}, Lemma \ref{rankroot} and Proposition \ref{ranktau}, we know there are infinite many isomorphism classes of indecomposable $\tau$-locally free $\Lambda$-modules.

In the case that $C$ is of Dynkin type, firstly, we need to prove that $\underline{rank}(X)\in \Delta^+(C)$ for any indecomposable $\tau$-locally free $\Lambda$-module $X$. We denote $\underline{rank}(X)=\underline{x}$. By Lemma \ref{positvenegative} there exists a least $s$ such that $c^s \underline{x}>0$ but $c^{s+1} \underline{x}\ngtr 0$. Because $c^+=s_n\cdots s_1,$ there also exists a least $i$ such that $0\leq i \leq n-1,~s_i\cdots s_1c^t\underline{x}>0,$ but $s_{i+1}\cdots s_1c^t\underline{x}\ngtr0$.

We know that $X'=F_i^+\cdots F_1^+C^{+t}X$ is indecomposable $\tau$-locally free by Lemma \ref{ranks1} and that
$$\underline{rank}(F_i^+\cdots F_1^+C^{+t}X)=s_i\cdots s_1c^t\underline{x}.$$
Because $s_{i+1}(\underline{rank}(X'))\ngtr 0$, there is an isomorphism $X'\cong E_{i+1}$ by Lemma \ref{ranks1}. So $s_i\cdots s_1c^t\underline{x}=\alpha_{i+1}$, and according to Lemma \ref{rootfinite} the vector $\underline{x}=c^{-t}s_1\cdots s_i\alpha_{i+1}=c^{-t}\beta_{i+1}$ is a positive root of $q_C(x)$ and $\underline{rank}(-)$ is surjective.

If $X$ and $Y$ are indecomposable $\tau$-locally free $\Lambda$-modules such that $\underline{rank}(X)=\underline{rank}(Y)$, then we have, as earlier $$F_i^+\cdots F_1^+C^{+t}X\cong E_{i+1}\cong F_i^+\cdots F_1^+C^{+t}Y$$ so that $$X\cong C^{-t}F_1^-\cdots F_i^-E_{i+1}\cong Y.$$ Thus $\underline{rank}(-)$ is an injective mapping.
\end{proof}

\begin{Cor} For the path algebra $\Lambda=AQ$ of an acyclic quiver $Q$ over a Frobenius algebra $A$, the following statements hold:

(a)\;  The number of isomorphism classes of indecomposable $\tau$-locally free $\Lambda$-modules is finite if and only if $Q$ is of Dynkin type;

(b)\; In the case when $Q$ is of Dynkin type,  the mapping $\underline{rank}:X\mapsto \underline{rank}(X)$ induces a bijection between the set of isomorphism classes of indecomposable $\tau$-locally free $\Lambda$-modules and the set of positive roots of the quadratic form $q_Q(x)=\sum_{i\in Q_0}x_i^2-\sum_{\alpha\in Q_1}x_{s(\alpha)}x_{t(\alpha)}$, where $x=(x_1,\ldots,x_n)^t\in \mathbb{Z}^n$.
\end{Cor}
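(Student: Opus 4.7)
The plan is to deduce this corollary from Theorem \ref{mainone} by showing that $\Lambda = AQ$ falls within the Frobenius-type triangular framework and that its associated Cartan matrix is (essentially) the symmetric Cartan matrix of the underlying graph of $Q$.

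First I would recognise $\Lambda = AQ$ as a Frobenius-type triangular matrix algebra. Using the admissible ordering from the Remark before Proposition \ref{adj} (so every arrow $\alpha$ satisfies $s(\alpha) > t(\alpha)$), Remark \ref{rem1.2}(iii) gives $\Lambda$ the structure described in Definition \ref{frobenius type} with $A_i = A$ for each vertex $i$ and $B_{ij} = A^{\#\{\alpha : j \to i\}}$. All freeness and self-duality hypotheses are immediate because $\operatorname{Hom}_A(A^m,A) \cong A^m$ naturally as an $A$-$A$-bimodule.

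Next I would compute the Cartan matrix $C = (c_{ij})$ defined at the start of Section 4 and compare $q_C$ with $q_Q$. Because all $A_i$ coincide with $A$, the ranks $\mathrm{rank}_{A_i}(B_{ij})$ reduce to the number of arrows between $i$ and $j$, which under the admissible ordering always go from the larger to the smaller index. Hence $c_{ij} = c_{ji} = -\#\{\alpha \in Q_1 : \{s(\alpha), t(\alpha)\} = \{i,j\}\}$ for $i \ne j$, i.e.\ $C$ is the usual symmetric Cartan matrix of the underlying graph of $Q$. Setting $d = \dim_k(A)$, so that $c_i = d$ for every $i$, the formula \eqref{matrixquadratic} gives
\[
q_C(x) = d\sum_{i} x_i^{2} - d\sum_{i<j}\#\{\alpha : j\to i\}\, x_i x_j = d\cdot q_Q(x).
\]
In particular, $q_C$ is positive definite if and only if $q_Q$ is, i.e.\ if and only if $Q$ is a Dynkin quiver. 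Equivalently, because every valuation on $\Gamma(\Lambda)$ is $(1,1)$, the valued graph $\Gamma(\Lambda)$ has the same underlying graph as $Q$, and Fact \ref{fact} delivers the same equivalence.

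Finally I would apply Theorem \ref{mainone}: part (a) of the corollary follows immediately from Theorem \ref{mainone}(a). For part (b), note that the Weyl-group reflections $s_i(\alpha_j) = \alpha_j - c_{ij}\alpha_i$ depend only on the off-diagonal entries of $C$, and these entries are exactly those of the simply-laced Cartan matrix of the underlying Dynkin graph; hence the set of positive (real) roots of $q_C$ coincides with the set of positive roots of the Tits form $q_Q$. Theorem \ref{mainone}(b) then yields the announced bijection $\underline{\mathrm{rank}} : X \mapsto \underline{\mathrm{rank}}(X)$ between the isomorphism classes of indecomposable $\tau$-locally free $\Lambda$-modules and the positive roots of $q_Q$. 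There is no serious obstacle; the only point requiring care is the bookkeeping in identifying $C$ (and its root system) with the Cartan matrix (and root system) of the underlying graph of $Q$, after which the corollary is a clean specialisation of Theorem \ref{mainone}.
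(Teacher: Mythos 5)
Your proposal is correct and follows essentially the same route as the paper: identify $\Lambda=AQ$ as a Frobenius-type triangular matrix algebra via Remark \ref{rem1.2}(iii) with $A_i=A$ and $B_{ij}=A^{\#\{\alpha:j\to i\}}$, compute that $C$ is symmetric with $q_C=(\dim_k A)\,q_Q$, and then specialise Theorem \ref{mainone}. Your added remark that the reflections (hence the positive roots) depend only on the off-diagonal entries of $C$ is a small but welcome elaboration of a step the paper leaves implicit.
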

\begin{proof} By Remark \ref{rem1.2} $(iii)$,  $\Lambda=AQ$  is a Frobenius-type triangular matrix algebra via  taking all $A_i=A$ and $B_{ij}=\oplus_{s=1}^{\#\{\alpha:j\rightarrow i\}} A$. So, $c_i=c_j=dim_kA$ and then $$-c_{ij}=-c_{ji}=dim_kB_{ij}/dim_kA=\#\{\alpha:j\rightarrow i\}$$ which means that the Cartan matrix $C$ is symmetric and in its corresponding valued quiver $\Gamma(\Lambda)$, the valuation $(-c_{ij}, -c_{ji})$ is given with the number of arrows from $j$ to $i$ in $Q$. By the definitions of the quadratic forms, we have:
$$q_C(x)=\sum_{i=1}^nc_ix_i^2-\sum_{i<j}c_i|c_{ij}|x_ix_j=dim_kA(\sum_{i=1}^nx_i^2-\sum_{i<j}|c_{ij}|x_ix_j)=q_Q(x)dim_kA.$$
Hence, the positivity definite property of  $q_C$ and $q_Q$ are the same with each other.
 Thus, the statements (a) and (b) follow respectively from Theorem \ref{mainone} (a) and (b).
\end{proof}

Recall in \cite{L} that for a generalized path algebra $\Lambda=k(Q,\mathcal A)$, there is a corresponding valued quiver $\Upsilon(\Lambda)$, whose the set of  vertices $\Upsilon(\Lambda)_0=Q_0$ and give an arrow $i\leftarrow j$ of $\Upsilon(\Lambda)$, whose valuation is $(d_{ji},d_{ij})$ with   $d_{ji}=|Q_{ij}|{dim}_kA_i$ and $d_{ij}=|Q_{ij}|{dim}_kA_j$ where $|Q_{ij}|$ means the number of arrows from $j$ to $i$ in $Q$.

In the case when $Q$ is acyclic, in order to realize $k(Q,\mathcal A)$ as a triangular matrix algebra,  we re-arrange the order of vertices in $Q$ via assuming $i<j$ if there exists a path from $j$ to $i$. Let $B_{ij}=A_i Q_{ij}A_j$ for $Q_{ij}$ the set of arrows from $j$ to $i$ in $Q$ and then define $A_{ij}$ as in Definition \ref{frobenius type} and put $A_i$ at the $(i,i)$-array. Then we obtain the triangular matrix algebra which is equal to $\Lambda=k(Q,\mathcal A)$.

  Moreover, $-c_{ij}={rank}_{A_i}(B_{ij})=|Q_{ij}|{dim}_kA_j=d_{ij}$. Thus, it follows that the valued quiver $\Upsilon(\Lambda)$ is coincident with $\Gamma(\Lambda)$.

 Define a quadratic form $q_{k(Q,\mathcal A)}:\mathbb{Z}^n\rightarrow \mathbb{Z}$ of $k(Q,\mathcal A)$ satisfying for $x=(x_1,\cdots,x_n)^t\in \mathbb Z^n$,
\begin{equation}\label{genpathquadratic}
q_{k(Q,\mathcal A)}(x)=\sum_{i\in Q_0}d_i x_i^2-\sum_{\alpha\in Q_1}d_{s(\alpha)}d_{s(\alpha)t(\alpha)}x_{s(\alpha)}x_{t(\alpha)}.
\end{equation}

Comparing this quadratic form with that in (\ref{matrixquadratic}), it is easy to see that $q_{k(Q,\mathcal A)}$ is the special case of $q_C$ for $\Lambda=k(Q,\mathcal A)$.

\begin{Cor}\label{newversion}  For an acyclic quiver $Q$ and its generalized path algebra $\Lambda=k(Q,\mathcal A)$ endowed by Frobenius algebras $A_i$ at all  vertices $i\in Q_0$, the following statements hold:

 (a)\; The number of isomorphism classes of indecomposable $\tau$-locally free $\Lambda$-modules is finite if and only if $\Upsilon(\Lambda)$ is of Dynkin type.

(b) If $\Omega(\Lambda)$ is of Dynkin type, then the mapping $\underline{rank}:X\mapsto \underline{rank}(X)$ induces a bijection between the set of isomorphism classes of indecomposable $\tau$-locally free $\Lambda$-modules and the set of positive roots of the $q_{k(Q,\mathcal A)}(x)=\sum_{i\in Q_0}d_i x_i^2-\sum_{\alpha\in Q_1}d_{s(\alpha)}d_{t(\alpha)}x_{s(\alpha)}x_{t(\alpha)}$, where $x=(x_1,\ldots,x_n)^t\in \mathbb{Z}^n$ and $d_i={dim}(A_i)$.
\end{Cor}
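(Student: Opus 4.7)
The strategy is to deduce both statements directly from Theorem \ref{mainone} applied to the Frobenius-type triangular matrix presentation of $\Lambda = k(Q,\mathcal{A})$ recorded in Remark \ref{rem1.2}(iv) and spelled out in the paragraphs immediately preceding the statement. All the structural work has already been carried out there; what is left is a clean comparison of invariants.

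First I would record the identification of the Cartan data. Fixing the linear order on $Q_0$ with $i<j$ whenever there is a path from $j$ to $i$, the choice of $A_i$ at the diagonal entries together with $B_{ij}=A_iQ_{ij}A_j$ above the diagonal turns $\Lambda$ into a Frobenius-type triangular matrix algebra. Since $B_{ij}\cong (A_i\otimes_k A_j)^{\oplus |Q_{ij}|}$ as an $A_i$-$A_j$-bimodule, its left $A_i$-rank is $|Q_{ij}|\dim_kA_j$ and its right $A_j$-rank is $|Q_{ij}|\dim_kA_i$. Hence $c_i=\dim_kA_i=d_i$ and, for $i<j$, $-c_{ij}=|Q_{ij}|d_j=d_{ij}$ and $-c_{ji}=|Q_{ij}|d_i=d_{ji}$. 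These are precisely the numbers labelling the valuation $(d_{ji},d_{ij})$ on the arrow $i\leftarrow j$ of $\Upsilon(\Lambda)$, so $\Gamma(\Lambda)$ and $\Upsilon(\Lambda)$ coincide as valued quivers; by Fact \ref{fact}, the Dynkin property of the symmetrizable Cartan matrix $C$ is equivalent to the Dynkin property of $\Upsilon(\Lambda)$.

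Next I would compare the quadratic forms. Substituting the previous identities into (\ref{matrixquadratic}) gives
$$q_C(x)=\sum_{i\in Q_0}d_ix_i^2-\sum_{i<j}|Q_{ij}|\,d_id_j\,x_ix_j,$$
and replacing the pair-sum by a sum over individual arrows $\alpha\in Q_1$ (each of the $|Q_{ij}|$ arrows from $j$ to $i$ contributing one copy of $d_jd_ix_jx_i$) rewrites the bilinear part as $\sum_{\alpha\in Q_1}d_{s(\alpha)}d_{t(\alpha)}x_{s(\alpha)}x_{t(\alpha)}$. Thus $q_C=q_{k(Q,\mathcal{A})}$, and their positive roots coincide. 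With these identifications in hand, parts (a) and (b) of the corollary now fall out of parts (a) and (b) of Theorem \ref{mainone}, respectively, since the mapping $\underline{\mathrm{rank}}$ is defined intrinsically in terms of the ranks of the $A_i$-modules $X_i$ and is unchanged by the passage between the two presentations of $\Lambda$.

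There is no genuine obstacle: the argument is bookkeeping that verifies the Cartan-type invariants built in Section 4 from the triangular matrix presentation of $\Lambda$ agree with the invariants naturally attached to the generalized path algebra. The only point requiring a moment of care is the bimodule decomposition $B_{ij}\cong (A_i\otimes_kA_j)^{\oplus|Q_{ij}|}$, but this is immediate from the definition of a generalized path algebra, where each formal arrow contributes a free copy of $A_i\otimes_kA_j$ independently.
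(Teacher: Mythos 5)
Your proposal is correct and follows essentially the same route as the paper: the paper's proof likewise reduces everything to the identification $\Upsilon(\Lambda)=\Gamma(\Lambda)$ (via $-c_{ij}=\mathrm{rank}_{A_i}(B_{ij})=|Q_{ij}|\dim_kA_j=d_{ij}$), the equality $q_{k(Q,\mathcal A)}=q_C$, Fact \ref{fact}, and Theorem \ref{mainone}. Your write-up simply makes explicit the bookkeeping that the paper carries out in the paragraphs immediately preceding the corollary.
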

\begin{proof}
They follow directly from $\Upsilon(\Lambda)=\Gamma(\Lambda)$, Fact \ref{fact}, Theorem \ref{mainone} and that $q_{k(Q,\mathcal A)}=q_C$ for $\Lambda=k(Q,\mathcal A)$.
\end{proof}

\section{Analog of APR-tilting module for $\Lambda$}
The $APR$-tilting modules were introduced by Auslander, Platzeck and Reiten in \cite{APR} to interpret $BGP$-reflection functors as homomorphism functors of certain tilting modules. Also, it was the beginning of tilting theory.

For a Frobenius-type triangular matrix algebra $\Lambda$, for the case $i=1$, we define $T_1:=\Lambda/P_1\oplus \tau^-(P_1)$ and  call $T_1$ a {\em generalized APR-tilting module} of $\Lambda$. This case follows from the fact that $i=1$ is a {\em ``sink vertex"} so as to gain the reflection functor.

\begin{Prop}\label{tilting}
For a Frobenius-type triangular matrix algebra $\Lambda$, $T_1$ is a tilting $\Lambda$-module.
\end{Prop}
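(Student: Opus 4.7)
The plan is to verify the three defining conditions for the classical tilting module $T_1 = \Lambda/P_1 \oplus \tau^-(P_1)$: that $\mathrm{proj.dim}(T_1) \leq 1$, that $\mathrm{Ext}^1_\Lambda(T_1, T_1) = 0$, and that there is a short exact sequence $0 \to \Lambda \to T' \to T'' \to 0$ with $T', T'' \in \mathrm{add}(T_1)$. I begin by noting that $\Lambda/P_1 \cong \bigoplus_{j \geq 2} P_j$ is projective and that $P_1 = E_1$ is locally free of rank one at vertex $1$ (and zero elsewhere), hence rigid and locally free; Proposition \ref{taulf}(ii) therefore asserts that $P_1$ is $\tau$-locally free with each $\tau^k(P_1)$ rigid.

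Condition one follows because $\Lambda/P_1$ is projective and $\tau^-(P_1)$ is locally free, so Corollary \ref{projdim} gives $\mathrm{proj.dim}(\tau^-(P_1)) \leq 1$. For condition two, I split $\mathrm{Ext}^1(T_1, T_1)$ into four summands. The two with $\Lambda/P_1$ on the left vanish as $\Lambda/P_1$ is projective; the self-Ext $\mathrm{Ext}^1(\tau^-(P_1), \tau^-(P_1))$ vanishes by the rigidity of $\tau^-(P_1)$ from Proposition \ref{taulf}(ii). For the remaining piece $\mathrm{Ext}^1(\tau^-(P_1), P_j)$ with $j \geq 2$, the Auslander-Reiten formula yields $D\overline{\mathrm{Hom}}(P_j, \tau\tau^-(P_1))$; the connectedness of $\Lambda$ combined with triangularity forces $B_{1j} \neq 0$ for some $j \geq 2$, so the injective resolution (\ref{injresolution}) of $E_1 = P_1$ has non-trivial right term, which means $P_1$ is non-injective and $\tau\tau^-(P_1) \cong P_1$. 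The Ext then reduces to $D\mathrm{Hom}(P_j, P_1) = DA_{j1} = 0$ by upper triangularity.

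For condition three I produce a minimal projective resolution of $\tau^-(P_1)$ of the form $0 \to P_1 \to Q \to \tau^-(P_1) \to 0$ with $Q \in \mathrm{add}(\Lambda/P_1)$; direct-summing with $0 \to \Lambda/P_1 \xrightarrow{\mathrm{id}} \Lambda/P_1 \to 0 \to 0$ then yields the required sequence. Since $\mathrm{proj.dim}(\tau^-(P_1)) \leq 1$, the minimal projective resolution is two-term $0 \to Q_0 \to Q_1 \to \tau^-(P_1) \to 0$. The identity $\tau\tau^-(P_1) \cong P_1$ together with the standard description $\tau\tau^-(P_1) \cong \ker(\nu Q_0 \to \nu Q_1)$ shows that $\nu Q_0 \to \nu Q_1$ realises the minimal injective copresentation of $P_1$; but this copresentation is already supplied by (\ref{injresolution}), which on using $B_{1j} \otimes_{A_j} e_j\Lambda \cong (e_j\Lambda)^{n_j}$ (with $n_j$ the right $A_j$-rank of $B_{1j}$) becomes $0 \to P_1 \to \nu(P_1) \to \bigoplus_{j \geq 2} \nu(P_j)^{n_j} \to 0$. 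Applying $\nu^{-1}$ thus forces $Q_0 \cong P_1$ and $Q_1 \cong \bigoplus_{j \geq 2} P_j^{n_j} \in \mathrm{add}(\Lambda/P_1)$.

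The main obstacle is condition three, precisely the identification of the minimal projective resolution of $\tau^-(P_1)$. The delicate point is that the Auslander-Reiten correspondence between minimal projective resolutions of a module and minimal injective copresentations of its $\tau$-translate under $\nu$ and $\nu^{-1}$ must be invoked carefully, and it is the upper triangularity $A_{j1} = 0$ for $j \geq 2$ that guarantees the right term $Q_1$ of the resolution avoids summands of $P_1$, so that $Q_1 \in \mathrm{add}(\Lambda/P_1)$ as needed.
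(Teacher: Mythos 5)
Your proof is correct in substance but reaches the third tilting axiom by a genuinely different route from the paper. For $\mathrm{proj.dim}(T_1)\leq 1$ and $\mathrm{Ext}^1_\Lambda(T_1,T_1)=0$ the two arguments essentially coincide (the paper is terser: it applies the Auslander--Reiten formula once to get $D\mathrm{Hom}_\Lambda(T_1,\tau(T_1))=D\mathrm{Hom}_\Lambda(T_1,P_1)=0$, whereas you split the Ext group into four summands; both ultimately rest on Proposition \ref{taulf}, Corollary \ref{projdim} and the vanishing $e_j\Lambda e_1=0$). The real divergence is in the last axiom: the paper does not construct the coresolution $0\to\Lambda\to T'\to T''\to 0$ at all, but instead counts indecomposable direct summands --- since $\Lambda$ is connected, $P_1$ has no injective summand, so $\tau^-$ carries its indecomposable summands bijectively to pairwise non-isomorphic non-projective indecomposables, giving $T_1$ the right number of summands for the Bongartz-type criterion. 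You instead exhibit the sequence explicitly by identifying the minimal projective presentation of $\tau^-(P_1)$ as the image under $\nu^{-1}$ of the minimal injective copresentation (\ref{injresolution}) of $P_1=E_1$; this is exactly the exact sequence (\ref{tau}) that the paper only derives later, in the proof of the final theorem of Section 5, so your argument in effect front-loads that computation. Your version is longer but self-contained and does not invoke the summand-counting criterion; the paper's is shorter but leans on that criterion and on the minimality of (\ref{injresolution}). One small caution common to both: since $A_1$ is Frobenius but need not be local, $P_1$ may be decomposable, so what is actually needed for $\tau\tau^-(P_1)\cong P_1$ (and for the paper's count) is that $P_1$ has \emph{no injective direct summand}, not merely that $P_1$ is non-injective; your sentence proves only the latter, and the stronger statement is where the connectedness of $\Lambda$ genuinely enters.
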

\begin{proof}
Since $P_1$ is a rigid and locally free, $\tau^-(P_1)$ is rigid and locally free by Proposition \ref{taulf}. So $T_1$ is locally free. Then by Corollary  \ref{projdim}, ${proj.dim} T_1\leq 1$.  Thus, ${Ext}_\Lambda^1(T_1,T_1)\cong {DHom}_\Lambda(T_1,\tau(T_1))={DHom}_\Lambda(T_1,P_1)=0$. Because $\Lambda$ is connected, $P_1$ have no injective summand. Since $\tau^-$ take non-injective indecomposable module to non-projective indecomposable module,  $T_1:=\Lambda/P_1\oplus \tau^-(P_1)$ has the same number of summands as primitive idempotents of $\Lambda$.  So $T_1$ is a tilting $\Lambda$-module.
\end{proof}

\begin{Rem}
A similar analog of generalized APR-tilting modules was introduced in \cite{Llp} for a class of triangular matrix alegbras.
\end{Rem}

\begin{Lem}\label{endiso}
For a Frobenius-type triangular matrix algebra $\Lambda$, there is an algebra isomorphism ${End}_\Lambda(T_1)\cong S_1(\Lambda).$
\end{Lem}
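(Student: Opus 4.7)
The plan is to compute $\mathrm{End}_\Lambda(T_1)$ block-by-block under the decomposition $T_1 = \bigoplus_{i=2}^n P_i \oplus \tau^-(P_1)$ and match each block to the corresponding entry of $S_1(\Lambda)$. Among the projective summands the identification is immediate: $\mathrm{Hom}_\Lambda(P_i, P_j) \cong e_i\Lambda e_j = A_{ij}$ for $2 \leq i, j \leq n$, which recovers the upper-left $(n-1)\times(n-1)$ submatrix of $S_1(\Lambda)$ together with the zeros below its diagonal.

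For the blocks involving $\tau^-(P_1)$, I would first iterate the adjunction in Proposition \ref{adj} to obtain the global adjunction $\mathrm{Hom}_\Lambda(C^-(X), Y) \cong \mathrm{Hom}_\Lambda(X, C^+(Y))$. Combined with the identifications $C^\pm \cong T\tau^\pm$ (from Theorem \ref{reflection functor} together with $T^2=\mathrm{id}$, since $T$ negates structure maps), this produces the functorial isomorphism
\[\mathrm{Hom}_\Lambda(\tau^-(P_1), Y) \cong \mathrm{Hom}_\Lambda(P_1, \tau(Y)) = e_1\,\tau(Y).\]
Setting $Y = \tau^-(P_1)$ and using $\tau\tau^-(P_1) = P_1$ (valid because $P_1$ has no injective summand by connectedness of $\Lambda$, as invoked in the proof of Proposition \ref{tilting}) gives $\mathrm{End}_\Lambda(\tau^-(P_1)) \cong e_1 P_1 = A_1$, matching the bottom-right entry of $S_1(\Lambda)$; setting $Y = P_j$ for $j \geq 2$ gives $\mathrm{Hom}_\Lambda(\tau^-(P_1), P_j) \cong (\tau P_j)_1 = 0$, matching the zeros in the last row. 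For the remaining last column, $\mathrm{Hom}_\Lambda(P_j, \tau^-(P_1)) = e_j\tau^-(P_1)$ is computed by applying the inverse Nakayama functor $\nu^{-1}$ to the minimal injective copresentation (\ref{injresolution}) of $P_1$. Via the duality $\text{Hom}_{-A_j}(B_{1j},A_j)=B_{j1}$ one obtains $\nu^{-1}(D(B_{1j}\otimes_{A_j}e_j\Lambda)) \cong P_j\otimes_{A_j}B_{j1}$, whence the presentation $P_1 \to \bigoplus_{j=2}^n P_j\otimes_{A_j}B_{j1} \to \tau^-(P_1) \to 0$. Applying $e_k$ for $k\geq 2$ (which annihilates $e_kP_1 = A_{k1} = 0$) yields $(\tau^-(P_1))_k \cong \bigoplus_{j=k}^n A_{kj}\otimes_{A_j}B_{j1}$, and expanding each $A_{kj}$ via (\ref{grade}) and reindexing reproduces exactly the direct-sum decomposition of $A_{k,1}$ defining the $(k,n)$-entry of $S_1(\Lambda)$.

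The hardest part will be verifying that composition of morphisms in $\mathrm{End}_\Lambda(T_1)$ corresponds to the multiplication maps $\mu_{ijq}$ of $S_1(\Lambda)$. Among the $P_i$-blocks composition is simply the multiplication in $\Lambda$, but for mixed compositions such as $A_{ij} \otimes A_{j,1} \to A_{i,1}$ one must trace how a morphism $P_i \to P_j$ acts on the tensor-product decomposition of $(\tau^-(P_1))_j$ obtained above, and likewise how the (zero) action of $\tau^-(P_1)$ on the $P_i$-summands is compatible with the last row of $S_1(\Lambda)$. Since both sides are governed by the associativity of tensor products of the bimodules $B_{pq}$ (and of the new $B_{q1}$), this compatibility follows from unraveling the definitions, but the bookkeeping across the three types of blocks is the most delicate piece of the argument.
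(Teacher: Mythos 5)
Your proposal is correct and follows essentially the same route as the paper's proof: both compute $\mathrm{End}_\Lambda(T_1)$ block by block, identify $\mathrm{Hom}_\Lambda(P_i,P_j)\cong e_i\Lambda e_j$ for $i,j\geq 2$, and obtain the column $\mathrm{Hom}_\Lambda(P_k,\tau^-(P_1))\cong\bigoplus_{j}A_{kj}\otimes_{A_j}B_{j1}\cong A_{k1}$ by applying $\nu^-$ to the injective copresentation (\ref{injresolution}) of $P_1$ and evaluating the resulting projective presentation of $\tau^-(P_1)$ at $e_k$. The only divergence is minor: for the last row and the $(1,1)$-entry you route through the iterated Coxeter-functor adjunction $\mathrm{Hom}(C^-X,Y)\cong\mathrm{Hom}(X,C^+Y)$ together with Theorem \ref{reflection functor}, whereas the paper invokes the AR-type isomorphisms directly (e.g. $\mathrm{Hom}_\Lambda(\tau^-(P_1),P_j)\cong D\,\mathrm{Ext}^1_\Lambda(P_j,P_1)=0$); and, like the paper, you leave the multiplicativity of the block isomorphisms as an acknowledged routine verification.
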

\begin{proof}
Clearly, when $2\leq i,j\leq n$
$$\begin{array}{ll}
                        e'_i{End}_\Lambda (T_1) e'_j &  \cong{Hom}_{{End} (T_1)}({End}_\Lambda (T_1)e'_i,{End}_\Lambda (T_1)e'_j)\\
                         & \cong{Hom}_{{End} (T_1)}({Hom}(T_1,P_i),{Hom}(T_1,P_j)) \\
                         & \cong{Hom}_\Lambda(P_i,P_j) \\
                         & \cong e_i\Lambda e_j.
                      \end{array}.$$
When $i=1,j=1,~e'_i{End}_\Lambda (T_1) e'_j\cong {Hom}_\Lambda(\tau^-(P_1),\tau^-(P_1))\cong {Hom}_\Lambda(P_1,P_1)\cong A_1$.\\
When $i=1,j>1,~e'_i{End}_\Lambda (T_1) e'_j\cong {Hom}_\Lambda(\tau^-(P_1),P_j)=0.$\\
When $j=1,i>1,~e'_i{End}_\Lambda (T_1) e'_j\cong {Hom}_\Lambda(P_i,\tau^-(P_1))$.

Since there is an minimal injective resolution: $$0\rightarrow P_1\rightarrow I_1\rightarrow \oplus_{j=2}^n I_j\otimes B_{j1}\rightarrow 0.$$
then we have $$0\rightarrow \nu^{-}(P_1)\rightarrow P_1\rightarrow \oplus_{j=2}^n P_j\otimes B_{j1}\rightarrow \tau^-(P_1)\rightarrow 0.$$
Applying functor ${Hom}_\Lambda(P_i,-)$ for $i>1$.

Since ${Hom}_\Lambda(P_i,P_1)=0,~{Hom}_\Lambda(P_i,\oplus_{k=2}^n P_k\otimes B_{k1})\rightarrow {Hom}_\Lambda(P_i,\tau^-(P_1))$ is injective.

Since $P_i$ is projective $\Lambda$-module, ${Hom}_\Lambda(P_i,\oplus_{k=2}^n P_k\otimes B_{k1})\rightarrow {Hom}_\Lambda(P_i,\tau^-(P_1))$ is surjective.

So, ${Hom}_\Lambda(P_i,\oplus_{k=2}^n P_k\otimes B_{k1})\cong {Hom}_\Lambda(P_i,\tau^-(P_1))$.

Then, $e'_i{End}_\Lambda (T_1) e'_1\cong {Hom}_\Lambda(P_i,\oplus_{k=2}^n P_k\otimes B_{k1})\cong \oplus_{k=2}^n e_1\Lambda e_k\otimes B_{k1}\cong A_{i1}$.

At last, ${End}_\Lambda (T_1)\cong S_1(\Lambda)$.
\end{proof}

Using Proposition \ref{tilting} and Lemma \ref{endiso}, we can prove the following theorem.

\begin{Thm}
For a Frobenius-type triangular matrix algebra $\Lambda$, there is a functorial isomorphism $$F_1^+(-)\cong {Hom}_\Lambda(T_1,-): {rep}(\Lambda)\rightarrow {rep}(S_1(\Lambda)).$$
\end{Thm}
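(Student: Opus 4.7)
The plan is to compute $\mathrm{Hom}_\Lambda(T_1,X)$ component-by-component and match it with $F_1^+(X)$. Since $T_1=\bigoplus_{i=2}^n P_i\oplus\tau^-(P_1)$, we get the decomposition
$$\mathrm{Hom}_\Lambda(T_1,X)=\bigoplus_{i=2}^n\mathrm{Hom}_\Lambda(P_i,X)\oplus\mathrm{Hom}_\Lambda(\tau^-(P_1),X).$$
For $i=2,\ldots,n$, the Yoneda-type isomorphism $\mathrm{Hom}_\Lambda(P_i,X)\cong e_iX=X_i$ already supplies the first $n-1$ components of $F_1^+(X)$ at the corresponding positions of $S_1(\Lambda)$.

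To identify the last component I would exploit the sequence
$$P_1\to\bigoplus_{j=2}^n P_j\otimes_{A_j}B_{j1}\to\tau^-(P_1)\to 0$$
that already appeared in the proof of Lemma \ref{endiso}, obtained by applying $\nu^{-1}$ to the minimal injective resolution of Proposition \ref{dimension1}. Applying $\mathrm{Hom}_\Lambda(-,X)$, using Yoneda together with the identification $\mathrm{Hom}_{A_j}(B_{j1},X_j)\cong B_{1j}\otimes_{A_j}X_j$ (which follows from $B_{j1}\cong\mathrm{Hom}_{A_j}(B_{1j},A_j)$ combined with the dual-basis Lemma \ref{dualbasis}), one gets
$$0\to\mathrm{Hom}_\Lambda(\tau^-(P_1),X)\to\bigoplus_{j=2}^n B_{1j}\otimes_{A_j}X_j\xrightarrow{X_{1,in}}X_1,$$
whose kernel is precisely $X_1'=\mathrm{Ker}(X_{1,in})$, the last component of $F_1^+(X)$.

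It then remains to verify that the $S_1(\Lambda)\cong\mathrm{End}_\Lambda(T_1)^{op}$-module structure on $\mathrm{Hom}_\Lambda(T_1,X)$ coincides with that defined on $F_1^+(X)$. Actions by the diagonal $A_i$ and by $A_{ij}$ with $2\leq i<j\leq n$ match directly through the Yoneda identifications. The only substantive point is the action of $A_{i1}^{S_1(\Lambda)}$ carrying the $\tau^-(P_1)$-component into the $P_i$-component: via the isomorphism $e_i'\mathrm{End}_\Lambda(T_1)e_1'\cong A_{i1}$ from Lemma \ref{endiso}, an element of $A_{i1}$ corresponds to a $\Lambda$-morphism $P_i\to\tau^-(P_1)$ realized, through the defining exact sequence above, by inclusion of the $i$-th summand $B_{i1}\hookrightarrow\bigoplus_{j\geq 2}P_j\otimes_{A_j}B_{j1}$. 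Composing a given element of $\mathrm{Hom}_\Lambda(\tau^-(P_1),X)\hookrightarrow\bigoplus_{j=2}^n B_{1j}\otimes_{A_j}X_j$ with this morphism and then projecting to the $i$-th summand recovers, after unwinding the dual-basis identity $b=\sum_{\ell\in L_{ij}}\ell^*(b)\ell$, exactly the structure map $\phi'_{i1}=\overline{\psi_{i1}}$. Functoriality in $X$ is then automatic since all identifications used are natural, and the induced map on kernels in the $\tau^-(P_1)$-component is uniquely determined by the commutativity of the defining square for $F_1^+(f)$. The main obstacle is precisely this module-structure compatibility: tracking the identifications through Yoneda, through the adjunction of Proposition \ref{isomorphism}, and through the dual-basis formulas requires careful bookkeeping, since $F_1^+$ packages the required maps via the adjoints $\overline{\psi_{i1}}$ rather than directly by Hom-composition.
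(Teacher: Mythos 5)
Your proposal is correct and follows essentially the same route as the paper: both apply $\mathrm{Hom}_\Lambda(-,X)$ to the exact sequence $P_1\to\oplus_{j=2}^{n}P_j\otimes_{A_j}B_{j1}\to\tau^{-}(P_1)\to 0$ (obtained from the Nakayama functor applied to the minimal injective resolution of $E_1=P_1$), use the dual-basis identification $\mathrm{Hom}_\Lambda(P_j\otimes B_{j1},X)\cong B_{1j}\otimes_{A_j}X_j$ to recognize the induced map as $X_{1,in}$, conclude $\mathrm{Hom}_\Lambda(\tau^{-}(P_1),X)\cong\mathrm{Ker}(X_{1,in})$, and invoke the algebra isomorphism $\mathrm{End}_\Lambda(T_1)\cong S_1(\Lambda)$ for the module structure. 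If anything, your explicit tracking of the $A_{i1}$-action against $\overline{\psi_{i1}}$ is more detailed than the paper's closing appeal to that lemma.
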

\begin{proof}
We know that $P_1\cong E_1$.

Since we have
$\nu^-(D(e_1\Lambda))={Hom}_\Lambda(D(D(e_1\Lambda)),\Lambda)\cong {Hom}_\Lambda(e_1\Lambda,\Lambda)\cong \Lambda e_1=P_1$.

Also
\begin{align}
\nu^-(D(B_{1j}\otimes_{A_j}e_j\Lambda))&={Hom}_\Lambda(B_{1j}\otimes_{A_j}e_j\Lambda,\Lambda)\\
&\cong {Hom}_{A_j}(B_{1j},{Hom}_\Lambda(e_j\Lambda,\Lambda))\\
&\cong{Hom}_{A_j}(B_{ij},\Lambda e_j)\\
&\cong \Lambda e_j\otimes_{A_j}{Hom}_{A_j}(B_{1j},A_j) \label{fingen}\\
&\cong P_j\otimes_{A_j}B_{ji}.
\end{align}
Since $\Lambda e_j$ is a finite generated projective right $A_j$-module,the  isomorphism in (\ref{fingen}) comes from \cite{modcat}.
So applying the quasi-inverse Nakayama functor $\nu^-$ to (\ref{injresolution}) we get an exact sequence
\begin{equation}\label{tau}
0\rightarrow \nu^-(E_1)\rightarrow P_1\rightarrow\oplus_{j=2}^{n}P_j\otimes_{A_j}B_{ji} \rightarrow \tau^-(P_1) \rightarrow 0.
\end{equation}
where $\theta_{1j}:P_1\rightarrow P_1\otimes B_{j1}$ is given by $\lambda e_i\mapsto \Sigma_{r\in R_{ij}}\lambda r\otimes r^*$.
We have isomorphism ${Hom}_\Lambda(P_j\otimes B_{j1},X)\cong {Hom}_\Lambda({Hom}_{A_j}(B_{ij},\Lambda e_j),X)\cong B_{1j}\otimes {Hom}_\Lambda(P_j,X)\cong B_{1j}\otimes X_j.$ The isomorphism ${Hom}_\Lambda(P_j\otimes B_{j1},X)\rightarrow B_{1j}\otimes X_j$ is given by $f\mapsto \Sigma_{r\in R_{ij}}\lambda r\otimes f(e_j\otimes r^*)$.
And isomorphism ${Hom}_\Lambda(P_1,X)\rightarrow X_1$ is given by $g\mapsto g(e_1)$.
We get a commutative diagram
\begin{equation*}
\CD
   {Hom}_\Lambda(P_j\otimes B_{j1},X) @>{Hom}_\Lambda(\theta_{1j},X)>> {Hom}_\Lambda(P_1,X) \\
    @V\eta_{1j}^X  VV @V \eta_i^X VV  \\
    B_{1j}\otimes X_j @>\varphi_{1j}>> X_1
\endCD
\end{equation*}
This follows from that for $f\in {Hom}_\Lambda(P_j\otimes B_{j1},X)$ and $r\in R_{1j}$ we have $$f(r\otimes r^*)=\varphi_{1j}(r\otimes f(e_j\otimes r^*).$$
Applying functor ${Hom}_\Lambda(-,X)$ to (\ref{tau}) for $X\in \Lambda-mod$, we obtain a commutative diagram:
\begin{equation*}
\CD
  0 @>>>{Hom}_\Lambda(\tau^-(P_1),X) @>>> \oplus_{j=2}^n {Hom}_\Lambda(P_j\otimes B_{j1},X) @>>> {Hom}_\Lambda(P_1,X) \\
  @V  VV @V  VV @V  VV @V  VV  \\
  0 @>>> {Ker}(X_{1,in}) @>>> \oplus_{j=2}^n B_{1j}\otimes X_j @>X_{1,in}>> X_1
\endCD
\end{equation*}
Since the last two terms are isomorphism, we obtain isomorphism ${Hom}_\Lambda(\tau^-(P_1),X)\cong {Ker}(M_{1,in})$.
Together with Lemma \ref{endiso}, we get the functorial isomorphism $F_1^+(-)\cong {Hom}_\Lambda(T_1,-).$
\end{proof}

This theorem is the main result in this section, whose corresponding analog in \cite{GLS} is the \cite[Theorem 9.7]{GLS}. But the method for proving in \cite{GLS} is incomplete for our case, the Frobenius-type triangular matrix algebra $\Lambda$.

 Besides Corollary \ref{newversion}, the main results in this paper, including those in this section, are interesting to be restricted two special cases, that is, $\Lambda$ is either a generalized path algebra $\Lambda=k(Q,\mathcal A)$ endowed by Frobenius algebras $A_i$ at each vertex $i\in Q_0$ or a path algebra $\Lambda=AQ$ of quiver $Q$ over a Frobenius algebra $A$.
\\

\textbf{Acknowledgements.}\; {\em We would like to thank  Christof Geiss and Jan Schr{\"o}er for their valuable comments and useful advices.
This project is supported by the National Natural Science Foundation of
China (No.11271318, No.11571173 and No.J1210038)  and the Zhejiang Provincial Natural Science
Foundation of China (No.LZ13A010001).}

\def\cprime{$'$}
\providecommand{\bysame}{\leavevmode\hbox to3em{\hrulefill}\thinspace}
\providecommand{\MR}{\relax\ifhmode\unskip\space\fi MR }
% \MRhref is called by the amsart/book/proc definition of \MR.
\providecommand{\MRhref}[2]{%
  \href{http://www.ams.org/mathscinet-getitem?mr=#1}{#2}
}
\providecommand{\href}[2]{#2}

%\addcontentsline{toc}{chapter}{²Î¿¼ÎÄÏ×}

%\nocite{*} % ÁÐ³öËùÓÐµÄ²Î¿¼ÎÄÏ×

%\end{thebibliography}
\end{document}